\def\UrlSpecials{\do\~{\kern -.15em\lower .7ex\hbox{~}\kern .04em}} \catcode`~=13
\newcommand{\urlwofont}[1]{\urlstyle{same}\url{#1}}
\newcommand{\widepagestyle}{
\voffset=0in
\hoffset=0in
\marginparwidth=0.7in
\oddsidemargin=0in
\evensidemargin=0in
\textwidth=6.5in
\textheight=8.5in
\topmargin=0in
\headheight=0in
\headsep=0.2in
\footskip=0.5in
}
\newcounter{are-there-sections}
\renewcommand\subsection{
  \renewcommand{\sfdefault}{pag}
  \@startsection{subsection}%
  {2}{0pt}{-\baselineskip}{.2\baselineskip}{\raggedright
    \sffamily\itshape\small
  }}
\renewcommand\section{
  \renewcommand{\sfdefault}{phv}
  \@startsection{section} %
  {1}{0pt}{\baselineskip}{.2\baselineskip}{\centering
    \sffamily
    \scshape
}}
\newcounter{lastyear}\setcounter{lastyear}{\the\year}
\newcommand\noin{\noindent}
\newcommand\input /home/kovacs/tex/latex/{\input /home/kovacs/tex/latex/} 
\newtheoremstyle{bozont}{3pt}{3pt}%
     {\itshape}
     {}
     {\bfseries}
     {.}
     {.5em}
     {\thmname{#1}\thmnumber{ #2}\thmnote{ \rm #3}}
\newtheoremstyle{bozont-sf}{3pt}{3pt}%
     {\itshape}
     {}
     {\sffamily}
     {.}
     {.5em}
     {\thmname{#1}\thmnumber{ #2}\thmnote{ \rm #3}}
\newtheoremstyle{bozont-sc}{3pt}{3pt}%
     {\itshape}
     {}
     {\scshape}
     {.}
     {.5em}
     {\thmname{#1}\thmnumber{ #2}\thmnote{ \rm #3}}
\newtheoremstyle{bozont-remark}{3pt}{3pt}%
     {}
     {}
     {\scshape}
     {}
     {.5em}
     {\thmname{#1}\thmnumber{ #2.}\thmnote{\ \  ( #3)}}
\newtheoremstyle{bozont-def}{3pt}{3pt}%
     {}
     {}
     {\bfseries}
     {.}
     {.5em}
     {\thmname{#1}\thmnumber{ #2}\thmnote{ \rm #3}}
\newtheoremstyle{bozont-reverse}{3pt}{3pt}%
     {\itshape}
     {}
     {\bfseries}
     {.}
     {.5em}
     {\thmnumber{#2.}\thmname{ #1}\thmnote{ \rm #3}}
\newtheoremstyle{bozont-reverse-sc}{3pt}{3pt}%
     {\itshape}
     {}
     {\scshape}
     {.}
     {.5em}
     {\thmnumber{#2.}\thmname{ #1}\thmnote{ \rm #3}}
\newtheoremstyle{bozont-reverse-sf}{3pt}{3pt}%
     {\itshape}
     {}
     {\sffamily}
     {.}
     {.5em}
     {\thmnumber{#2.}\thmname{ #1}\thmnote{ \rm #3}}
\newtheoremstyle{bozont-remark-reverse}{3pt}{3pt}%
     {}
     {}
     {\sc}
     {.}
     {.5em}
     {\thmnumber{#2.}\thmname{ #1}\thmnote{ \rm #3}}
\newtheoremstyle{bozont-def-reverse}{3pt}{3pt}%
     {}
     {}
     {\bfseries}
     {.}
     {.5em}
     {\thmnumber{#2.}\thmname{ #1}\thmnote{ \rm #3}}
\newtheoremstyle{bozont-def-newnum-reverse}{3pt}{3pt}%
     {}
     {}
     {\bfseries}
     {}
     {.5em}
     {\thmnumber{#2.}\thmname{ #1}\thmnote{ \rm #3}}
\theoremstyle{bozont}    
\ifnum \value{are-there-sections}=0 {%
  \newtheorem{proclaim}{Theorem}
  \newtheorem{proclaim}{Theorem}[section]
\newtheorem{thm}[proclaim]{Theorem}
\newtheorem{cor}[proclaim]{Corollary} 
\newtheorem{lem}[proclaim]{Lemma} 
\newtheorem{prop}[proclaim]{Proposition}
\newtheorem{fact}[proclaim]{Fact}
\theoremstyle{bozont-sc}
\newtheorem{proclaim-special}[proclaim]{\specialthmname}
\theoremstyle{bozont-remark}
\newtheorem{rem}[proclaim]{Remark}
\newtheorem{const}[proclaim]{Construction}
\newtheorem*{SubHeading*}{\SubHeadingName}%
\newtheorem{SubHeading}[proclaim]{\SubHeadingName}
\newtheorem{sSubHeading}[equation]{\sSubHeadingName}
\newenvironment{demo-r}[1]{\def\SubHeadingName{#1}\begin{SubHeading-r}}
  {\end{SubHeading-r}}%
\newenvironment{subdemo-r}[1]{\def\sSubHeadingName{#1}\begin{sSubHeading-r}}
  {\end{sSubHeading-r}} %
\newenvironment{demo*}[1]{\def\SubHeadingName{#1}\begin{SubHeading*}}
  {\end{SubHeading*}}%
\newtheorem{question}[proclaim]{Question}
\newtheorem{defn-thm}[proclaim]{Definition--Theorem}  
\theoremstyle{bozont-def}
\theoremstyle{bozont-reverse}    
\theoremstyle{bozont-reverse-sc}
\newtheorem{proclaimr-special}[proclaim]{\specialthmname}
{\def\specialthmname{#1}\begin{proclaimr-special}}%
{\end{proclaimr-special}}
\theoremstyle{bozont-remark-reverse}
\newtheorem{SubHeading-r}[proclaim]{\SubHeadingName}
\newtheorem{sSubHeading-r}[equation]{\sSubHeadingName}
\newtheorem{SubHeadingr}[proclaim]{\SubHeadingName}
\theoremstyle{bozont-def-newnum-reverse}    
\theoremstyle{bozont-def-reverse}
\newtheorem{newnumspecial}[proclaim]{\specialnewnumname}
\numberwithin{equation}{proclaim}
\numberwithin{figure}{section}
\newenvironment{enumerate-p}{
  \begin{enumerate}}
  {\setcounter{equation}{\value{enumi}}\end{enumerate}}
\newenvironment{enumerate-cont}{
  \begin{enumerate}
    {\setcounter{enumi}{\value{equation}}}}
  {\setcounter{equation}{\value{enumi}}
  \end{enumerate}}
\newlength{\swidth}
\DeclareMathAlphabet{\smallchanc}{OT1}{pzc}%
                                 {m}{it}
\DeclareFontFamily{OT1}{pzc}{}
\DeclareFontShape{OT1}{pzc}{m}{it}%
             {<-> s * [1.100] pzcmi7t}{}
\DeclareMathAlphabet{\mathchanc}{OT1}{pzc}%
                                 {m}{it}
\newcommand{\mcH}{\mathchanc{H}}
\newcommand{\mcT}{\mathchanc{T}}
\newcommand{\mcm}{\mathchanc{m}}
\newcommand{\mco}{\mathchanc{o}}
\newcommand{\mcr}{\mathchanc{r}}
\DeclareFontFamily{OMS}{rsfs}{\skewchar\font'60}
\DeclareFontShape{OMS}{rsfs}{m}{n}{<-5>rsfs5 <5-7>rsfs7 <7->rsfs10 }{}
\DeclareSymbolFont{rsfs}{OMS}{rsfs}{m}{n}
\DeclareSymbolFontAlphabet{\scr}{rsfs}
\newcommand{\sC}{\scr{C}}
\newcommand{\sE}{\scr{E}}
\newcommand{\sF}{\scr{F}}
\newcommand{\sG}{\scr{G}}
\newcommand{\sH}{\scr{H}}
\newcommand{\sI}{\scr{I}}
\newcommand{\sO}{\scr{O}}
\newcommand{\sP}{\scr{P}}
\newcommand{\sR}{\scr{R}}
\newcommand{\sX}{\scr{X}}
\newcommand{\bC}{\mathbb{C}}
\newcommand{\bP}{\mathbb{P}}
\newcommand{\bQ}{\mathbb{Q}}
\newcommand{\bZ}{\mathbb{Z}}
\newcommand{\fM}{\mathfrak{M}}
\DeclareMathOperator{\depth}{{depth}}
\newcommand{\sHom}[0]{{\mcH\mco\mcm}}    
\newcommand{\sTor}[0]{{\mcT\mco\mcr}}    
\DeclareMathOperator{\id}{{id}}
\DeclareMathOperator{\red}{red}
\DeclareMathOperator{\Spec}{{Spec}}
\DeclareMathOperator{\supp}{{supp}}
\newcommand{\factor}[2]{\left. \raise 2pt\hbox{\ensuremath{#1}} \right/
        \hskip -2pt\raise -2pt\hbox{\ensuremath{#2}}}
\def\coh#1.#2.#3.{H^{#1}(#2,#3)}
\def\dimcoh#1.#2.#3.{h^{#1}(#2,#3)}
\def\hypcoh#1.#2.#3.{\mathbb H_{\vphantom{l}}^{#1}(#2,#3)}
\def\loccoh#1.#2.#3.#4.{H^{#1}_{#2}(#3,#4)}
\def\dimloccoh#1.#2.#3.#4.{h^{#1}_{#2}(#3,#4)}
\def\lochypcoh#1.#2.#3.#4.{\mathbb H^{#1}_{#2}(#3,#4)}
\def\ses#1.#2.#3.{0  \longrightarrow  #1   \longrightarrow 
 #2 \longrightarrow #3 \longrightarrow 0} 
\def\sesshort#1.#2.#3.{0
 \rightarrow #1 \rightarrow #2 \rightarrow #3 \rightarrow 0}
\def\dist#1.#2.#3.{  #1   \longrightarrow 
 #2 \longrightarrow #3 \stackrel{+1}{\longrightarrow} } 
\def\CDdist#1.#2.#3.{  #1   @>>>  #2  @>>>   #3 @>+1>> }  
\def\shortses#1.#2.#3.{0  \rightarrow  #1   \rightarrow 
 #2  \rightarrow   #3 \rightarrow  0}
\def\shortdist#1.#2.#3.{  #1   \rightarrow 
 #2  \rightarrow   #3 \stackrel{+1}{\rightarrow} }  
\def\ddist#1.#2.#3.#4.#5.#6.{\CD
#1 @>>> #2 @>>> #3 @>+1>> \\
@VVV @VVV @VVV \\
#4 @>>> #5 @>>> #6 @>+1>> 
\endCD}
\def\ddistun#1.#2.#3.#4.#5.#6.{\CD
#1 @>>> #2 @>>> #3 @>+1>> \\
@. @VVV @VVV  \\
#4 @>>> #5 @>>> #6 @>+1>> 
\endCD}
\def\Iff#1#2#3{
\hfil\hbox{\hsize =#1
\vtop{\noin #2}
\hskip.5cm 
\lower.5\baselineskip\hbox{$\Leftrightarrow$}\hskip.5cm
\vtop{\noin #3}}\hfil\medskip}
\newcommand{\union}\cup
\newcommand{\intersect}\cap
\newcommand{\Union}\bigcup
\newcommand{\Intersect}\bigcap
\def\myoplus#1.#2.{\underset #1 \to {\overset #2 \to \oplus}}
\title[Base change behavior of the relative canonical sheaf]{Base change behavior of the relative canonical sheaf related to higher dimensional moduli}
\author{Zsolt Patakfalvi}
\address{Zsolt Patakfalvi, University of Washington, Department of Mathematics, Box 354350,
Seattle, WA 98195, U.S.A.
}
\email{pzs@math.washington.edu}
\urladdr{http://www.math.washington.edu/\~{}pzs}
\theoremstyle{plain}
\begin{document}

\begin{abstract} 
We show that the compatibility of the relative canonical sheaf with base change fails generally in families of normal varieties. Furthermore, it always fails if the general fiber of a family of pure dimension $n$ is Cohen-Macaulay and the special fiber contains a strictly $S_{n-1}$ point. In particular, in moduli spaces with functorial relative canonical sheaves Cohen-Macaulay schemes can not degenerate to $S_{n-1}$ schemes. Another, less immediate consequence is that the canonical sheaf of an $S_{n-1}$, $G_2$ scheme of pure dimension $n$ is not $S_3$. 
\end{abstract}

\maketitle

\tableofcontents


\section{Introduction}
\label{sec:introduction}

The canonical sheaf plays a crucial role in the classification of varieties of characteristic zero. Global sections of its powers define the canonical map, which is birational onto its image  for varieties of general type with mild singularities. The image is called the canonical model, and it is a unique representative of the birational equivalence class of the original variety. In particular, the canonical model can be used to construct a moduli space that classifies varieties of general type up to birational equivalence. This moduli space $\overline{\fM}_h$ of stable schemes, is the higher dimensional generalization of the intensely investigated space $\overline{\fM}_g$ of stable curves. In order to build $\overline{\fM}_h$,  it is important to understand when  the canonical sheaf behaves functorially in families, that is, when it is  compatible with base change. 

More precisely, to obtain a compact moduli space, in $\overline{\fM}_h$ not only canonical models are allowed, but also their generalizations, the semi-log canonical models \cite[Definition 15]{KJ_MOV}. By definition these are projective schemes with semi-log canonical singularities \cite[Definition 3.13.5]{HCD_KSJ_RA} and ample canonical bundles. The first naive definition of the moduli functor of stable schemes with Hilbert function $h$ is then as follows. Here $h : \bZ \to \bZ$ is an arbitrary function.
\begin{equation}
\label{eq:naive}
\overline{\fM}_h(B) = \factor{\left\{ f : X \to B  \left| \parbox{160pt}{$f$ is flat, proper, $X_{\bar{b}}$ is a semi-log canonical model ($\forall b \in B$), $h(m)= \chi ( \omega_{X_b}^{[m]} ) \  (\forall m \in \bZ, b \in B)$}  \right. \right\}}{\parbox{50pt}{\qquad \\[20pt] $\cong$  over $B$}}
\end{equation}
As usual, the naive definition works only in the naive cases but not in general. More precisely, \eqref{eq:naive} is insufficient to prove the existence of a projective coarse moduli space or a proper Deligne-Mumford stack structure on $\overline{\fM}_h$ (c.f., \cite{KJ_MOV},\cite{KJ_HAH}). In general, \eqref{eq:naive} has to be complemented with:
\begin{equation}
             \label{eq:Kollar}
               \left. \omega_{X/B}^{[m]} \right|_{X_b} \cong \omega_{X_b}^{[m]} \textrm{ for every integer } m \textrm{ and } b \in B .
\end{equation}
Usually \eqref{eq:Kollar} is referred to as \emph{Koll\'ar's condition} (e.g. \cite[page 238]{HB_KSJ_RPB}). Note also that \eqref{eq:Kollar} is not necessary for reduced $B$, but it does add important extra restrictions when $B$ is non-reduced.

Currently, it is not understood in every aspect why and how deeply this condition is needed. For example it is not known if in characteristic zero it is equivalent or not to the other possible choice, called \emph{Viehweg's condition}
(see \cite[Assumption 8.30]{VE_QPM}  or \cite[page 238]{HB_KSJ_RPB}):
\begin{equation}
  \label{eq:line_bundle}
  \textrm{there is an integer $m$ such that }   \omega_{X/B}^{[m]} \textrm{ is a line bundle}.
\end{equation}
The starting point of this article is the $m=1$ case of \eqref{eq:Kollar}, that is,  the compatibility of the relative canonical sheaf with base change. We will try to understand how restrictive this condition is on flat families. The results will also yield statements about how Serre's $S_n$ condition behaves in families and  for the canonical sheaves of single schemes.

Recently it has been proven in \cite[Theorem 7.9.3]{KJ_KS_LCS}, that the relative canonical sheaf of flat families of projective schemes (over $\bC$) with Du Bois fibers is compatible with base change. According to \cite[Theorem 1.4]{KJ_KS_LCS} this pertains to families with semi-log canonical fibers as well.  Furthermore, compatibility holds whenever the fibers are Cohen-Macaulay \cite[Theorem 3.6.1]{CB_GD}  . 

It is important to note at this point, that the $m=1$ case of \eqref{eq:Kollar}, behaves differently than the rest. For $m >1$ there are examples of families of normal surfaces for which \eqref{eq:Kollar} does not hold (c.f., \cite[Section 14.A]{HCD_KSJ_RA}).  However, since normal surfaces are Cohen-Macaulay, condition \eqref{eq:Kollar} with $m=1$ holds for every flat family of normal surfaces. Hence, any incompatibility can be observed only in higher dimensions. Partly due to this fact, there has been  a common misbelief, sometimes even stated in articles, that the relative canonical sheaf is compatible with base change for flat families of normal varieties. The question if this compatibility holds indeed was asked about the same time independently by J\'anos Koll\'ar and the author.

\begin{question} {}
[Koll\'ar]
Is $\omega_{X/B}|_{X_b} \cong \omega_{X_b}$ for every flat family $ X \to B$ of normal varieties?
\end{question}

Here we construct examples showing that the answer is no.  That is, there are flat families of normal varieties over smooth curves such that the relative canonical sheaves are not compatible with base change. The examples also show that the known results are optimal in many senses. That is, the fibers of the given families can be chosen to be $S_{j}$ for any $n > j \geq 2$ and their relative canonical sheaves to be $\bQ$-line bundles. The precise statement is as follows. 

\begin{thm} [(= Corollary \ref{cor:result1})]
\label{thm:main1}
For each $n \geq 3$ and $n>j \geq 2$ there is a flat family $\sH \to B$ of  $S_{j}$ (but not $S_{j+1}$), normal varieties of dimension $n$ over some open set $B \subseteq \bP^1$, with $\omega_{\sH/B}$ a $\bQ$-line bundle, such that
\begin{equation}
\label{eq:main_theorem}
\left. \omega_{\sH/B} \right|_{\sH_0} \not\cong \omega_{\sH_0} ,
\end{equation}
(Here $\sH_0$ is the central fiber of $\sH$.)

Moreover, the general fiber of $\sH$ can be chosen to be smooth and the central fiber to have only one singular point.
\end{thm}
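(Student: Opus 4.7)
\emph{Overall strategy.} The plan is to realize $\sH \to B$ as a one-parameter flat degeneration whose central fiber is a projective cone over a carefully chosen smooth projective $(n-1)$-fold $Y$ and whose nearby fibers are smooth $n$-folds. The choice of $Y$ and its polarizing line bundle $L$ will be tuned so that the cone vertex has depth exactly $j$, and the base-change failure will then follow from the main non-compatibility theorem of the paper applied to the resulting family.

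\emph{Cone over $Y$.} For each pair $(n, j)$ with $n \geq 3$ and $2 \leq j \leq n - 1$, I would produce a smooth projective $(n - 1)$-fold $Y$ with an ample line bundle $L$ satisfying the following three conditions: (i) $\omega_Y$ is proportional to $L$ in $\Pic(Y) \otimes \bQ$, so that the cone will be $\bQ$-Gorenstein; (ii) $H^i(Y, L^m) = 0$ for $0 < i < j - 1$ and all $m \in \bZ$; and (iii) $H^{j-1}(Y, L^{m_0}) \neq 0$ for some $m_0 \in \bZ$. Natural candidates include Calabi--Yau varieties with controlled Hodge structure (abelian varieties for $j = 2$; K3 or hyperk\"ahler manifolds for $j = 3$; higher-dimensional Calabi--Yau manifolds or products of these with projective spaces, with prescribed $h^{0, i}$, for larger $j$), or varieties of general type polarized by a power of $\omega_Y$. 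After replacing $L$ by a large enough power to guarantee projective normality, I would embed $Y \hookrightarrow \bP^N$ by $|L|$ and form the projective cone $Z = C_p(Y, L) \subset \bP^{N+1}$ with vertex $v$. The local cohomology identity $H^i_{\fm}(R) = \bigoplus_m H^{i-1}(Y, L^m)$ for $i \geq 2$, applied to the homogeneous coordinate ring $R$ of $Y$, together with (ii) and (iii), yields $\depth \cO_{Z, v} = j$; projective normality ensures $Z$ is normal. Thus $Z$ is $S_j$ but not $S_{j+1}$, with $v$ its only singular point.

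\emph{Smoothing and conclusion.} The main technical step, and the principal obstacle in the plan, is to exhibit a flat one-parameter deformation $\sH \to B$ of $Z$ over an open $B \subseteq \bP^1$ with $\sH_0 = Z$ and $\sH_t$ smooth for $t \neq 0$. I would attempt to construct such a deformation by realizing $Z$ as the zero scheme of a section of an appropriate ample vector bundle on a larger ambient variety (for instance a projective bundle over $\bP^1$), and then perturbing the defining section in a generic one-parameter direction; a Bertini-type argument gives smoothness of the generic member away from the vertex, and a direct tangent-space computation at the vertex verifies that the perturbation unobstructedly smooths the singularity (this is the delicate point, and may force an adjustment of $Y$ and the embedding). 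The condition $\omega_Y \sim_{\bQ} L^a$ in (i) ensures that $\omega_{\sH/B}$ is a $\bQ$-line bundle on the total space $\sH$. Finally, since $\sH_t$ is smooth (hence Cohen--Macaulay) for $t \neq 0$ and $\sH_0$ has a strictly $S_j$ point at $v$ (in particular a non-Cohen--Macaulay point, as $j < n$), the main non-compatibility result of the paper applies and yields $\omega_{\sH/B}|_{\sH_0} \not\cong \omega_{\sH_0}$, establishing the theorem.
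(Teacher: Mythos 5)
Your final step contains a genuine logical gap. You conclude by asserting that a Cohen--Macaulay (smooth) general fiber together with a strictly $S_j$ point in the central fiber forces $\omega_{\sH/B}|_{\sH_0} \not\cong \omega_{\sH_0}$. That implication is only true for $j = n-1$: it is exactly Theorem \ref{thm:main2}, whose hypothesis is that a component of the locus of points of fiberwise depth \emph{equal to} $n-1$ lies in the central fiber. For $2 \leq j < n-1$ the implication is false, and the paper itself proves this in Proposition \ref{prop:sharp}: there one builds a flat family with smooth-in-codimension-$0$ Cohen--Macaulay general fiber whose central fiber has a point of depth exactly $i$ for any $2 \leq i < n-1$, and for which base change nevertheless \emph{holds}. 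The correct invariant governing the failure is not $\depth \sO$ at the bad point but the depth of the canonical sheaf: by Proposition \ref{prop:base_change_complex}.(11) and Lemma \ref{lem:restriction_map_pencil}, what one needs is that $\omega$ of the relevant ambient variety fails to be $S_3$ at the point in question. Your conditions (i)--(iii) control $H^i(Y,L^m)$ only in the range $0 < i \leq j-1$, hence control $\depth \sO$ at the vertex, but say nothing about $H^{n-2}(Y,L^m)$, i.e.\ (by Serre duality) about $H^1(Y,\omega_Y \otimes L^{-m})$, which is what determines whether $\omega$ is $S_3$ at the vertex. If those groups all vanish your family satisfies base change and the theorem is not proved. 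This is why Proposition \ref{prop:existence} takes $Y$ to be a complete intersection in a \emph{product} of two Calabi--Yau hypersurfaces of dimensions $d-1$ and $n+1-l$ with $l=2$: the first factor makes $\depth \sO = d = j+1$ at the vertex, while the second forces $H^{n-1}(Y,\sO_Y) \neq 0$ and hence $\depth \omega = 2$, which is what actually triggers Lemma \ref{lem:restriction_map_pencil}.(2).

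There is a second, independent gap in the smoothing step, which you flag yourself but do not resolve. Cones over polarized smooth projective varieties are in general \emph{not} smoothable (e.g.\ cones over abelian surfaces, by Schlessinger-type results), so ``perturbing the defining section'' plus a tangent-space computation cannot be expected to work for the $Y$ you need, and no argument is given that it does. The paper avoids deformation theory entirely: it works with a cone $X$ of dimension $n+1$ over $Y$ and takes the pencil (Construction \ref{const:main}) generated by a cone over a general hypersurface $D \subset Y$ (the central fiber, which passes through the vertex and inherits depth $j = d-1$ there by Fact \ref{fact:S_d_hyperplane}) and a general hypersurface of $X$ missing the vertex (the general fiber, smooth by Bertini since $X \setminus \{P\}$ is smooth). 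This produces the flat degeneration for free, keeps the total space under control, and lets one compute $\omega_{\sH/B}$ by adjunction from $X \times B$ via Proposition \ref{prop:restriction_Cartier_divisor}. I would recommend restructuring your construction along these lines and replacing the appeal to Theorem \ref{thm:main2} by the criterion of Lemma \ref{lem:restriction_map_pencil}.(2) applied to a cone whose canonical sheaf has depth $2$ at the vertex.
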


When $j=n-1$ and the general fiber is Cohen-Macaulay, somewhat surprisingly,  the incompatibility of \eqref{eq:main_theorem} always holds. Furthermore, one can allow $S_{n-1}$ points also in the general fibers provided the relative $S_{n-1}$ locus has a components in the central fiber. The precise statement is as follows. (See Section \ref{sec:notation} for the assumptions of the article, e.g., scheme is always separated and of finite type over $k= \bar{k}$, etc.)

\begin{thm} [ (= Theorem \ref{thm:central_fiber_S_n-1} ) ]
\label{thm:main2}
If $f : \sH \to B$ is a flat family of schemes of pure dimension $n$ over a smooth  curve, such that a component of the locus
\begin{equation*}
 \overline{\{ x \in \sH | x \textrm{ is closed, } \depth \sO_{\sH_{f(x)},x} = n-1 \} }
\end{equation*}
is contained in the special fiber $\sH_0$, then  the restriction homomorphism $\omega_{\sH/B}|_{\sH_0} \to \omega_{\sH_0}$ is not an isomorphism.
\end{thm}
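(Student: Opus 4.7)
The plan is to interpret the restriction homomorphism via the Cartier-divisor distinguished triangle $\omega_{\sH/B}^\bullet \to \omega_{\sH/B}^\bullet \to \omega_{\sH_0}^\bullet$ coming from multiplication by a local parameter $t$ on $B$, reduce the non-isomorphism to an $f^*t$-torsion statement in the next cohomology sheaf $\sN := h^{-n+1}(\omega_{\sH/B}^\bullet)$, and then produce the torsion by combining local duality with lower semi-continuity of depth.

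First, I would tensor derived the Koszul sequence $0 \to \sO_\sH \xrightarrow{f^*t} \sO_\sH \to \sO_{\sH_0} \to 0$ with $\omega_{\sH/B}^\bullet$ to obtain the distinguished triangle
\[
\omega_{\sH/B}^\bullet \xrightarrow{f^*t} \omega_{\sH/B}^\bullet \longrightarrow Li^*\omega_{\sH/B}^\bullet \xrightarrow{+1}.
\]
Flat base change for the relative dualizing complex identifies $Li^*\omega_{\sH/B}^\bullet \cong \omega_{\sH_0}^\bullet$. Taking cohomology and using that $\omega_{\sH_0}^\bullet$ has no cohomology in degrees below $-n$ yields the four-term exact sequence
\[
0 \to \omega_{\sH/B} \xrightarrow{f^*t} \omega_{\sH/B} \to \omega_{\sH_0} \to \sN[f^*t] \to 0,
\]
so the restriction is injective with cokernel the $f^*t$-torsion $\sN[f^*t]$; thus the theorem reduces to showing $\sN[f^*t] \ne 0$.

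Next, I would use local duality to locate the support of $\sN$. At any closed point $x \in W$, flatness of $f$ over the smooth curve $B$ gives $\depth \sO_{\sH,x} = 1 + \depth \sO_{\sH_{f(x)},x} = n$ while $\dim \sO_{\sH,x} = n+1$; hence $H^n_{\fm_x}(\sO_{\sH,x}) \ne 0$, and via $\omega_\sH^\bullet \cong \omega_{\sH/B}^\bullet \otimes f^*\omega_B[1]$ this translates to $\sN_x \ne 0$. Conversely, at a closed point $y$ where the fiber is Cohen-Macaulay, $\sO_{\sH,y}$ is Cohen-Macaulay by flatness, so $H^n_{\fm_y}(\sO_{\sH,y}) = 0$ and $\sN_y = 0$. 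Consequently the support of $\sN$ is contained in the non-Cohen-Macaulay fiber locus, and contains $W$.

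Finally, I would argue by contradiction. Assume $\sN[f^*t] = 0$, so no associated prime of $\sN$ contains $f^*t$. Fix a closed point $x_0 \in W$; lower semi-continuity of depth makes $\{y \in \sH : \depth \sO_{\sH,y} \geq n\}$ open, so in a neighborhood $U$ of $x_0$ every closed $y$ with $\sN_y \ne 0$ satisfies simultaneously $\depth \sO_{\sH,y} \geq n$ and $\depth \sO_{\sH,y} \leq n$ (the latter from the non-Cohen-Macaulay fiber containment of the previous paragraph), forcing fiber depth exactly $n-1$ at $y$. Hence $U \cap \supp \sN \subseteq Z$, where $Z$ is the closure of the depth-$(n-1)$ fiber points. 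Since $W$ is an irreducible component of $Z$ containing $x_0$, any irreducible component $W'$ of $\supp \sN$ through $x_0$ is contained in $Z$ and contains $W$, forcing $W' = W$. Thus $W$ is an irreducible component of $\supp \sN$, its generic point $\eta$ is a minimal associated prime, and since $W \subseteq V(f^*t)$ this prime contains $f^*t$, giving a nonzero section of $\sN$ killed by $f^*t$ and contradicting the assumption. The hardest part will be this last semi-continuity step, ensuring that $W$ emerges as an irreducible component of $\supp \sN$ rather than as a proper subset of a larger component extending out of $\sH_0$; this is precisely where the hypothesis that $W$ is a component of the depth-$(n-1)$ locus (not just contained in it) gets used.
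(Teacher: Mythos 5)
Your proof is correct and follows essentially the same route as the paper: both reduce the statement to the four-term exact sequence $0 \to \omega_{\sH/B}|_{\sH_0} \to \omega_{\sH_0} \to T \to 0$, where $T$ is the $t$-torsion of $h^{-(n-1)}(\omega_{\sH/B}^{\bullet})$ (the paper writes $T$ as $\sTor^1_{\sH}(\sE,\sO_{\sH_0})$ via its Lemma \ref{lem:tor}, which is the same module), and then both use the local-duality description of the depth-$(n-1)$ locus to exhibit $W$ as an associated component of that cohomology sheaf lying inside $V(f^*t)$. The only differences are presentational: you obtain the sequence by tensoring the Koszul triangle with $\omega_{\sH/B}^{\bullet}$ rather than by first truncating the dualizing complex, and you phrase the final step as a contradiction rather than invoking the Tor lemma directly.
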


In particular, the contrapositive of  Theorem \ref{thm:main2} when the general fiber is Cohen-Macaulay yields the following corollary. 

\begin{cor} [ (= Corollary \ref{cor:no_S_n-1}) ]
\label{cor:main}
If $f : \sH \to B$ is  a flat family   of  schemes of pure dimension $n$ such that $\omega_{\sH/B}$ is compatible with base change  and the general fiber of $f$ is Cohen-Macaulay, then the central fiber of $f$ cannot have a closed point $x$, such that $\depth \sO_{\sH_{f(x)},x}=n-1$.
\end{cor}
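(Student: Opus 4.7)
The statement is literally the contrapositive of Theorem \ref{thm:main2} under the extra hypothesis that the general fiber is Cohen-Macaulay, so the plan is to argue by contradiction and reduce to the hypothesis of that theorem. Assume $\omega_{\sH/B}|_{\sH_0} \cong \omega_{\sH_0}$ and that there exists a closed point $x \in \sH_0$ with $\depth \sO_{\sH_0, x} = n-1$. Set
\[
L := \bigl\{\, y \in \sH \ \bigm|\ y \textrm{ closed}, \ \depth \sO_{\sH_{f(y)}, y} = n-1 \,\bigr\},
\]
so that $x \in L$. The goal is to show that some irreducible component of $\overline{L}$ is entirely contained in $\sH_0$, which will contradict Theorem \ref{thm:main2}.

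The key reduction is to control the locus $L$ using the Cohen-Macaulay hypothesis on the general fiber. First, since $\sH$ has pure dimension $n$, a closed point $y$ of a fiber $\sH_{f(y)}$ is Cohen-Macaulay in that fiber if and only if $\depth \sO_{\sH_{f(y)}, y} = n$; in particular every point of $L$ is non-Cohen-Macaulay in its fiber. Next, because $f$ is flat over the regular curve $B$, a point $y \in \sH$ is Cohen-Macaulay in $\sH$ if and only if it is Cohen-Macaulay in its fiber $\sH_{f(y)}$. The Cohen-Macaulay locus of the Noetherian scheme $\sH$ is open, hence so is its fiberwise reinterpretation, and the hypothesis that the general fiber is Cohen-Macaulay gives a nonempty open $U \subseteq B$ with $f^{-1}(U)$ contained in this locus. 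Thus $L \subseteq f^{-1}(B \setminus U)$, and since $B$ is a smooth curve, $B \setminus U$ consists of finitely many closed points, so $L$ is contained in the union of finitely many fibers $\sH_{b_1} \cup \cdots \cup \sH_{b_r}$.

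Now let $Z$ be the irreducible component of $\overline{L}$ containing the given point $x \in \sH_0$. Then $Z$ is an irreducible closed subset of $\sH_{b_1} \cup \cdots \cup \sH_{b_r}$, and since each $\sH_{b_i}$ is closed in $\sH$, irreducibility forces $Z \subseteq \sH_{b_i}$ for a single index $i$. The presence of $x \in Z \cap \sH_0$ then forces $b_i = 0$, so $Z \subseteq \sH_0$. This places us precisely in the hypothesis of Theorem \ref{thm:main2}, which predicts that $\omega_{\sH/B}|_{\sH_0} \to \omega_{\sH_0}$ fails to be an isomorphism, contradicting our assumption.

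The argument has no real obstacle beyond quoting Theorem \ref{thm:main2}: the only substantive input is the standard fact that the Cohen-Macaulay locus is open together with the observation that over a one-dimensional base the non-Cohen-Macaulay locus collapses to finitely many fibers, which is what converts the closure appearing in Theorem \ref{thm:main2} into something automatically contained in $\sH_0$.
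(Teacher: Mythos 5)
Your proof is correct and follows the same route the paper intends: Corollary \ref{cor:main} is stated there as an immediate consequence of Theorem \ref{thm:central_fiber_S_n-1}, and your argument simply makes explicit the one verification the paper leaves implicit, namely that openness of the Cohen--Macaulay locus plus flatness over the smooth curve $B$ confines the depth-$(n-1)$ locus to finitely many fibers, so the component of its closure through $x$ lies in $\sH_0$. The only cosmetic point is that ``compatible with base change'' should be read as the \emph{canonical} restriction homomorphism being an isomorphism (rather than a mere abstract isomorphism of sheaves), which is exactly what Theorem \ref{thm:central_fiber_S_n-1} contradicts.
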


Corollary \ref{cor:main} has many geometric consequences with respect to building moduli spaces with functorial relative canonical sheaves. For example, cone singularities over abelian surfaces can not be smoothed over irreducible bases. It also generalizes some aspects of theorems by Koll\'ar and Kov\'acs \cite[Theorem 7.12]{KJ_KS_LCS} and Hassett \cite[Theorem 1.1]{HB_SL} stating that if all fibers are Du Bois schemes or log canonical surfaces and the general fiber is $S_k$ or Cohen-Macaulay, respectively, then so is the central fiber. 

Interestingly,  the non-existence of a depth $n-1$ point is the strongest implication of the compatibility of the relative canonical sheaf with base change. 

\begin{prop} [(= Proposition \ref{prop:sharp})]
\label{prop:main}
Corollary \ref{cor:main} is sharp in the sense that $n-1$ cannot be replaced by $i$ for any $i<n-1$. 
\end{prop}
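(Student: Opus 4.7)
My plan is to exhibit, for each integer $i$ with $2 \leq i \leq n-2$, a flat family $f : \sH \to B$ of schemes of pure dimension $n$ over a smooth curve with the three properties that \textup{(a)} the generic fibre of $f$ is Cohen--Macaulay, \textup{(b)} $\omega_{\sH/B}$ is compatible with base change, and \textup{(c)} the central fibre $\sH_0$ has a closed point $x$ with $\depth \sO_{\sH_0, x} = i$. Producing such an example for every admissible $i<n-1$ shows that the conclusion of Corollary \ref{cor:main} cannot be strengthened.

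For the central fibre I would take the affine cone over a suitable smooth projective variety. Pick a smooth projective $V$ of dimension $n-1$ and an ample line bundle $L$ on $V$ so that
\[
H^j(V, L^m) = 0 \text{ for all } 0 < j < i-1 \text{ and all } m \in \bZ, \qquad H^{i-1}(V, L^{m_0}) \neq 0 \text{ for some } m_0 \in \bZ.
\]
Set $\sH_0 := \Spec \bigoplus_{m \geq 0} H^0(V, L^m)$. By the standard identification $H^l_\frm(R) \cong \bigoplus_m H^{l-1}(V, L^m)$ for $l \geq 2$, where $R$ is the section ring, the vertex is a closed point whose local ring has depth exactly $i$. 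Concrete pairs $(V, L)$ of this shape are easy to produce; for instance take $V = Y \times \bP^{n-i}$ with $Y$ a smooth hypersurface of dimension $i-1$ and sufficiently large degree in $\bP^i$, polarized by a sufficiently ample tensor product of the factor polarizations, and verify the cohomological conditions via K\"unneth, Kodaira vanishing on $Y$, and the standard cohomology of projective space.

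Next I would place $\sH_0$ as the central fibre of a flat family $\sH \to B$ over a smooth curve with smooth generic fibre, via a one-parameter perturbation of the homogeneous equations cutting out $\sH_0 \subset \bA^N$; for the products $V$ above, the $\bG_m$-equivariant deformation theory of the cone furnishes the unobstructedness of such a smoothing. Since $V$ is smooth, $\sH_0$ has Du Bois singularities (cones over smooth projective varieties are Du Bois), and since the generic fibres are smooth they are Du Bois as well, so every fibre of $f$ is Du Bois. Compatibility \textup{(b)} then follows from the Koll\'ar--Kov\'acs theorem \cite[Theorem~7.9.3]{KJ_KS_LCS}, while \textup{(a)} and \textup{(c)} are immediate from the construction.

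The main obstacle is the existence of the smoothing in the previous step: cones over arbitrary smooth projective varieties need not admit flat deformations with Cohen--Macaulay generic fibre, and even for the product $V = Y \times \bP^{n-i}$ suggested above one must take care with the $\bG_m$-equivariant vanishing of $T^2$ in the relevant degrees. An alternative route that sidesteps this difficulty is to allow the generic fibre to be merely Cohen--Macaulay rather than smooth and then still invoke Koll\'ar--Kov\'acs to obtain \textup{(b)}, which considerably simplifies the construction while losing none of the sharpness.
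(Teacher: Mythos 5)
Your overall strategy---exhibit, for each $2 \le i \le n-2$, a flat family with Cohen--Macaulay general fibre, base-change-compatible relative canonical sheaf, and a depth-$i$ closed point on the special fibre---is the right reading of what sharpness requires, and your cohomological recipe for a cone whose vertex has depth exactly $i$ is essentially the paper's Lemma \ref{lem:S_d_cones}. But the gap you flag at the smoothing step is not merely an unverified technicality; as stated, it is fatal. You need $L$ ``sufficiently ample'' for projective normality, for the Serre-vanishing half of the depth computation, and for the Du Bois property of the cone; yet for such $L$ (replace any polarization by a high power) the negative-degree graded pieces of $T^1$ of the affine cone, which by Pinkham's theory are exactly where smoothing directions live, vanish by Serre vanishing and Serre duality. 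All deformations of the cone are then again cones over deformations of $(V,L)$, every fibre retains a vertex of depth $i$, and no degeneration from Cohen--Macaulay schemes to your $\sH_0$ exists. Your fallback of asking only for a Cohen--Macaulay rather than smooth general fibre weakens the target without supplying any construction, so the gap stands. (The paper itself records the prototype of this obstruction: cones over abelian surfaces cannot be smoothed over irreducible bases.)

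The paper's proof sidesteps deformation theory entirely. It takes the $(n+1)$-dimensional projective cone $X$ of Proposition \ref{prop:existence} with $d=i+1$ and $l=3$, so that $\depth \sO_{X,P}=i+1$ at the vertex $P$ and $\omega_X$ is $S_3$, and runs Construction \ref{const:main}: a pencil $\sH \to B$ of hypersurfaces of $X$ whose special member is a subcone through $P$ and whose general member misses $P$. The general fibre is Cohen--Macaulay (indeed smooth) because $X$ is smooth away from $P$; the special fibre acquires a closed point of depth exactly $i$ by the hyperplane-section depth statement (Fact \ref{fact:S_d_hyperplane}); and, crucially, compatibility of $\omega_{\sH/B}$ with base change is obtained not from Du Bois-ness of the fibres but from the $S_3$ condition on $\omega_X$ via Lemma \ref{lem:restriction_map_pencil}.\ref{itm:restriction_map_pencil:omega_X_S3}. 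The flexibility in the parameter $l$ of Proposition \ref{prop:existence} is exactly what makes this work: $l=3$ buys the base-change compatibility while $d=i+1$ produces the depth-$i$ point after cutting. To salvage your route you would have to find a polarization ample enough for the depth and Du Bois computations yet admitting a negative-degree smoothing direction, and then verify the smoothing exists---a delicate balance your proposal does not address.
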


Summarizing, Corollary \ref{cor:main} and Proposition \ref{prop:main} state that in moduli spaces satisfying Koll\'ar's condition, $S_{n-1}$ schemes do not appear in the irreducible components containing Cohen-Macaulay schemes. However, $S_j$ schemes can possibly show up for some $j<n-1$.

If a scheme $X$ is Cohen-Macaulay, which by definition means that $\sO_X$ is Cohen-Macaulay, then $\omega_X$ is Cohen-Macaulay as well \cite[Corollary 5.70]{KJ_MS_BG}.   One would expect that if $\sO_X$ is only $S_{n-1}$, then typically $\omega_X$ is also $S_{n-1}$ or at least it can be $S_{n-1}$. Surprisingly the truth is quite the opposite. The following application of Theorem \ref{thm:main2} states that in certain cases an $S_{n-1}$ scheme cannot have even an $S_3$ canonical sheaf. 

\begin{thm}[(=Theorem \ref{thm:structure_canonical_Serre})]
\label{thm:main3}
If $X$ is an  $S_3, G_2$ scheme of pure dimension $n$, which has a closed point with depth $n-1$, then $\omega_X$ is not $S_3$.
\end{thm}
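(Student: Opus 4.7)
The plan is to derive a contradiction from Theorem \ref{thm:main2}. Assume, for contradiction, that $\omega_X$ is $S_3$, and let $x \in X$ be a closed point with $\depth \sO_{X,x} = n-1$ (so necessarily $n \geq 4$). I would construct a flat family $f \colon \sH \to B$ satisfying the hypotheses of Theorem \ref{thm:main2}, with central fiber $\sH_0 \cong X$, and then show that the assumption that $\omega_X$ is $S_3$ forces the base-change map $\omega_{\sH/B}|_{\sH_0} \to \omega_{\sH_0}$ to be an isomorphism, contradicting the conclusion of Theorem \ref{thm:main2}.

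The heart of the argument is the construction of $f$: I seek a one-parameter flat deformation of $X$ over a smooth affine curve germ $(B,0)$ with $\sH_0 \cong X$ such that, after shrinking $B$, no closed point in $\sH \setminus \sH_0$ near $x$ lies in the depth-$(n{-}1)$ locus of the fibers of $f$. Then the irreducible component of that locus passing through $x$ is contained in $\sH_0$, verifying the hypothesis of Theorem \ref{thm:main2}. The $G_2$ hypothesis on $X$ makes $\omega_X$ locally free outside a closed subset of codimension at least $3$, which should supply enough rigidity to produce such a deformation: one embeds a local neighborhood of $x$ into a smooth ambient scheme and deforms the defining ideal of $X$ to a nearby ideal whose general member cuts out a Cohen--Macaulay subscheme.

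Given such a family, I would argue that the $S_3$-hypothesis on $\omega_X$ makes $\omega_{\sH/B}|_{\sH_0} \to \omega_{\sH_0}$ an isomorphism near $x$. The failure of base change is measured by the low-degree local cohomology groups $H^i_{\fm_x}(\omega_{X,x})$ for $i \leq 2$: these arise from the distinguished triangle $\omega^{\bullet}_{\sH/B} \xrightarrow{t} \omega^{\bullet}_{\sH/B} \to \omega^{\bullet}_{\sH_0}$ (for a uniformizer $t$ of $B$ at $0$) together with the spectral sequence comparing the cohomology of $\omega^{\bullet}_{\sH_0}$ with that of the restriction of $\omega^{\bullet}_{\sH/B}$. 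The $S_3$-condition on $\omega_X$ forces $H^i_{\fm_x}(\omega_{X,x}) = 0$ for $i \leq 2$, so the obstruction vanishes and base change holds---contradicting Theorem \ref{thm:main2} applied to $f$.

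The principal obstacle is the deformation-theoretic construction in the second paragraph: producing a flat one-parameter family with Cohen--Macaulay general fibers specializing to $X$ is where the bulk of the work lies, and it requires genuine input from the interplay of the $S_3$ and $G_2$ hypotheses. A secondary, more technical difficulty is the local-cohomology bookkeeping in the third paragraph, specifically identifying that it is precisely $\depth \omega_{X,x}$---and not some other invariant of the total space---that obstructs base change for the constructed family.
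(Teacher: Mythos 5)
There is a genuine gap, and it sits exactly where you flag ``the bulk of the work'': the deformation you need does not exist in general, and no construction is given. You want a flat family $f\colon \sH\to B$ with $\sH_0\cong X$ (so fibers of dimension $n$) such that the component of the relative depth-$(n-1)$ locus through $x$ is contained in $\sH_0$. The trivial family $X\times B$ fails (its depth-$(n-1)$ locus is $Z\times B$, which dominates $B$), and a nontrivial deformation whose general fiber has strictly better depth at the relevant points is precisely the kind of object that the paper's Corollary \ref{cor:main} shows is frequently obstructed (e.g.\ cones over abelian surfaces admit no such smoothing over an irreducible base). The $G_2$ and $S_3$ hypotheses give no deformation-theoretic existence statement of this sort, so the argument cannot be completed along these lines. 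The paper sidesteps this entirely: it never deforms $X$. Instead it takes a pencil $\sH\to B=\bP^1$ of hypersurface sections \emph{inside} $X$ (so the fibers have dimension $n-1$), with $\sH_0$ chosen to contain the non-Gorenstein locus and the closure $Z$ of the depth-$(n-1)$ points, and $\sH_\infty$ general. By Fact \ref{fact:S_d_hyperplane} the relative depth-$(n-2)$ locus $W$ of this family satisfies $W=(p^{-1}Z)_{\red}$ with $W_0=Z$, and since $\sH_\infty$ misses a top-dimensional component $Z'$ of $Z$, that $Z'$ is an irreducible component of $W$ lying in $\sH_0$; Theorem \ref{thm:main2} (applied in dimension $n-1$, depth $n-2$) then says $\omega_{\sH/B}|_{\sH_0}\to\omega_{\sH_0}$ is not an isomorphism.

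Your third paragraph has a secondary problem you half-acknowledge: the obstruction to base change for $\omega_{\sH/B}$ is not $H^i_{\fm_x}(\omega_{X,x})$ for $i\le 2$ but, per Proposition \ref{prop:base_change_complex}.\ref{itm:base_change_complex:S2}, the condition $\depth\omega_{\sH/B,P}\ge\min\{3,\dim\sO_{\sH,P}\}$ along $\sH_0$ --- an invariant of the relative canonical sheaf of the \emph{total space}, which for an unspecified deformation is not controlled by $\omega_X$ at all. In the paper's setup this is resolved by Lemma \ref{lem:restriction_map_pencil}.\ref{itm:restriction_map_pencil:omega_X_S3}: for the product $\sX=X\times B$ one has $\omega_{\sX/B}\cong p_1^*\omega_X$, the $S_3$ hypothesis on $\omega_X$ transfers to the needed depth bound on $\omega_{\sX/B}$ via Fact \ref{fact:S_d_pullback}, and adjunction $\omega_{\sX/B}(\sH)|_{\sH}\cong\omega_{\sH/B}$ (Proposition \ref{prop:restriction_Cartier_divisor} together with the $S_2$, $G_1$ comparison of \cite[Theorems 1.9 and 1.12]{HR_GD}) carries it to $\omega_{\sH/B}$, yielding the isomorphism that contradicts the previous paragraph. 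So the contradiction template is right, but both halves of your argument need the hypersurface-pencil construction to actually go through.
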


The most immediate consequences of Theorem \ref{thm:main3} deal with compatibility of restriction to subvarieties. For example, one can show that on a cone $X$ over a Calabi-Yau threefold $Y$ with $h^2(\sO_Y) \neq 0$, for an effective, normal  Cartier divisor $D$,
\begin{equation*}
\omega_X(D)|_D \cong \omega_D \Leftrightarrow \textrm{$D$ does not pass through the vertex} .
\end{equation*}
Or more generally, for an $S_{n-1}$, normal variety $X$ and an effective, normal Cartier divisor $D$,
\begin{equation*}
\omega_X(D)|_D \cong \omega_D \Leftrightarrow \textrm{$D$ does not pass through any closed point with depth $n-1$} .
\end{equation*}

Theorem \ref{thm:main3} can also  be related to log canonical centers. If $(X,D)$ is a log canonical pair, $D \sim_{\bQ} -K_X$  and $\omega_X$ is not $S_3$ at $x \in X$, then $x$ is a log canonical center of  the pair $(X, D)$ \cite[Theorem 3]{KJ_ALV}. Hence by Theorem \ref{thm:main3}, if $X$ is  $S_{n-1}$ and  $(X,D)$ log canonical such that $D \sim_{\bQ} -K_X$, then $(X, D)$ has a log-canonical center at all closed points with depth $n-1$. This statement is of course obvious if we know that the depth $n-1$ closed points are already log-canonical centers of $X$. However, that is not always the case. For example, let $X$ be the cone, with high enough polarization, over the product $Y$ of a K3 surface with the projective line and let $D$ be the cone over an anti canonical divisor of $Y$. Then, $(X,D)$ is log-canonical,  $X$ is $S_{n-1}$ and the cone point is the only closed point with depth $n-1$ (c.f., Lemma \ref{lem:S_d_cones}). Still, the vertex is not a log-canonical center of $X$, because $K_X$ is not $\bQ$-Cartier.

Theorem \ref{thm:main3} raises  the following question as well. 

\begin{question}
\label{qtn:S_n-1}
Is it true that if $X$ is a pure $n$-dimensional scheme such that $\sO_X$ is $S_l$, but not $S_{l+1}$, and $\omega_X$ is $S_j$, but not $S_{j+1}$, for some $j,l<n$, then $j+l \leq n+1$?
\end{question}

\begin{rem}
By the methods of Section \ref{sec:S_d_cones}, the answer to Question \ref{qtn:S_n-1} is positive if $X$ is a cone over a smooth projective variety. 
\end{rem}

There are a couple of intuitive reasons for the failure of compatibility in \eqref{eq:main_theorem}. First, compatibility holds for the relative dualizing complex if the base is smooth by Proposition \ref{prop:base_change_complex}.\ref{itm:base_change_complex:restriction_isomorphism}. Hence  $\omega_{\sH/B}$ is a non-functorial component, the $-n$-th cohomology sheaf, of the functorial object $\omega_{\sH/B}^{\bullet}$. For example, by the proof of Theorem \ref{thm:main2}, if the general fiber is Cohen-Macaulay and the central fiber is $S_{n-1}$, the restriction homomorphism fits into an exact sequence as follows, with a non-zero term on the right.
\begin{equation}
\label{eq:tor}
\xymatrix{ 
0 \ar[r] & \omega_{\sH/B}|_{X_b} \ar[r] & \omega_{\sH_b} \ar[r] & \sTor^1(h^{-(n-1)}(\omega_{\sH/B}^{\bullet}), \sO_{\sH_0}) \ar[r] & 0 
} 
\end{equation}
This shows in a precise way, how the functoriality might be destroyed by passing to the lowest cohomology sheaf of $\omega_{\sH/B}^{\bullet}$.

Another explanation for the incompatibility \eqref{eq:main_theorem} is that $\sH_0$ is too singular. Using stable reduction one may find a replacement for $\sH_0$ with the mildest possible singularities. The reduction steps consist of blow-ups, finite surjective normalized base changes and contractions on the total space of the family. The output is a family, the relative canonical sheaf of which is compatible with base change by \cite{KJ_KS_LCS}. At the end of the article, we also present the stable reduction of our construction using a straight forward ad-hoc method. The algorithmic, and lengthy, method can be found in the preprint version of the article.

In Section \ref{sec:background}, we start with a short background overview on the base-change properties of relative dualizing complexes and relative canonical sheaves. The proofs of the main theorems can be found in Section \ref{sec:construction_of_families} and Section \ref{sec:Cohen-Macaulay}. Some of these results are based on the existence of projective cones with appropriately chosen singularities.  In Section \ref{sec:S_d_cones} we give a cohomological characterization of when certain sheaves on a cone are $S_d$. Then in Section \ref{sec:construction_of_varieties} we use this characterization to give the desired examples of projective cones. In Section \ref{sec:stable_reduction} we compute the stable limit of our construction.

\paragraph{\textbf{Acknowledgements.}} The discussion contains ideas that originated from J\'anos Koll\'ar and my advisor S\'andor Kov\'acs. I would like to thank both of them for their help. I would also like to thank Joseph Lipman for useful comments.

\section{Notations and assumptions}
\label{sec:notation}

Unless otherwise stated \emph{scheme} means a separated scheme of finite type over  a fixed field $k$ of characteristic zero and every morphism is separated.  A \emph{variety} is an integral scheme.  A \emph{projective} or \emph{quasi-projective} scheme  means a projective or quasi-projective scheme  over $k$. A \emph{curve} is a quasi-projective, integral scheme of dimension one. If $Y$ is a subscheme of $X$, then $\sI_{Y,X}$ is the ideal sheaf of $Y$ in $X$. If $\sI_{Y,X}$ is a line bundle (i.e. a locally free sheaf of rank one), then we define $\sO_X(-Y):=\sI_{Y,X}$ and $\sO_X(Y):= \sO_X(-Y)^{-1}$. Notice, that $\sI_{Y,X}$ being a line bundle is equivalent, to $Y$ being defined around every point $P$ by a single non zero divisor element of $\sO_{X,P}$.  

A hypersurface of a quasi-projective scheme $X \subseteq \bP^N$ is a subscheme $H \subseteq X$ defined by a section of $\sO_{X}(d)$ for some $d>0$.
If $H$ and $H'$ are hypersurfaces of a quasi-projective scheme $X \subseteq \bP^N$, defined by $f_0$ and $f_{\infty} \in H^0(\bP^N, \sO_{\bP^N}(d))$, respectively, then the \emph{pencil} generated by $H$ and $H'$ is the subscheme $\sH \subseteq X \times \bP^1$ defined by the section $f_0 t_0 + f_{\infty} t_1$ of $H^0(X \times \bP^1, \sO(d,1))$. Here $t_0$ and $t_1$ are the usual parameters of $\bP^1$, and $f_0$ and $f_{\infty}$ are viewed as elements of $H^0(X, \sO_X(d))$ via the natural homomorphism $H^0(\bP^N, \sO_{\bP^N}(d)) \to H^0(X, \sO_X(d))  $.

For a complex $\sC^{\bullet}$ of sheaves, $h^i(\sC^{\bullet})$ is the $i$-th cohomology sheaf of $\sC$. For a morphism $f : X \to Y$, $\omega_{X/Y}^{\bullet}:= f^! \sO_Y$, where $f^!$ is the functor obtained in \cite[Corollary VII.3.4.a]{HR_RAD}. If $f$ has equidimensional fibers of dimension $n$, then $\omega_{X/Y} := h^{-n} (\omega_{X/Y}^{\bullet})$. Every complex and morphism of complexes is considered in the derived category $D(qc/\_)$ of quasi-coherent sheaves up to the equivalences defined there. If $Z$ is a closed subscheme of $X$, where $\iota : Z \to X$ is the embedding morphism, then the map $\sR \iota_* \cong \iota_*$ identifies $D(qc/Z)$ with a full subcategory of $D(qc/X)$. We use this identification at multiple places, equating $\sC^{\bullet}$ and $\sR \iota_* \sC^{\bullet}$ for every $\sC^{\bullet} \in D(qc/Z)$.  If $Z$ is a closed subscheme of a scheme $X$, then $(\_)|^L_Z$ denotes the derived restriction functor, which is naturally isomorphic to $\_ \otimes^L \sO_Z$ via the above mentioned identification.   A \emph{line bundle} is a locally free sheaf of rank one.

If $X \to B$ is a morphism of schemes, then $X_b$ is the scheme theoretic fiber of $X$ over $B$. If a sheaf $\sF$ on $X$ is given, then $\sF_b:= \sF|_{X_b}$. The dimension $\dim_X P$ of a point $P \in X$ is the dimension of its closure in $X$. The acronym \emph{slc} stands for semi-log canonical (\cite[Definition 3.13.5]{HCD_KSJ_RA}). 

The \emph{depth}  of a coherent sheaf $\sF$ at a point $x \in X$ is by definition the depth of $\sF_x$ with respect to the maximal ideal $m_{X,x}$ at $x$ and is denoted by $\depth \sF_x$. The depth of a scheme $X$ at $x$ is $\depth \sO_{X,x}$.  A coherent sheaf $\sF$ is $S_d$ on $X$ if for every $x \in X$,
\begin{equation}
\label{eq:depth_S_d}
\depth \sF_x \geq \min \{d,\dim \sO_{X,x}\}.
\end{equation}
Note, that there is an ambiguity in the literature about the definition of $S_d$ sheaves. Many sources replace $\sO_{X,x}$ in \eqref{eq:depth_S_d} by $\sF_x$, thus gaining a weaker notion. Since, every sheaf of this article has full support, or equivalently every sheaf is considered over its support, the two definitions are equivalent for all cases considered here. Hence, we decided to include the stronger notion, but the reader should feel free to think about the other one as well. For a morphism $f : X \to B$, $\sF$ is \emph{relative} $S_d$ if $\sF|_{X_{b}}$ is $S_d$ for all $b \in B$.  The word (relative) \emph{Cohen-Macaulay} is a synonym for (relative) $S_{\dim X}$.

A scheme $X$ is $G_r$ for some $r \geq 0$ if it is Gorenstein in codimension $r$. A point $P \in X$ is an \emph{associated point} of a coherent sheaf $\sF$ if $m_{X,P}$ is the annihilator of some element of $\sF_P$. An \emph{associated component} of a coherent sheaf is the closure of an associated point. One can show that if $Q \in X$, $\sF_Q \neq 0$ and $\sP$ is the set of prime ideals of $\sO_{X,Q}$ corresponding to generalizations of $Q$ that are associated points of $\sF$, then \begin{equation*}
\bigcup_{P \in \sP} P = \{x \in \sO_{X,Q} | \exists 0 \neq m \in \sF_Q : x m = 0 \}
\end{equation*}
Consequently, if $s$ is a section of a line bundle, then it does not vanish on any associated component of $X$ (i.e., of $\sO_X$) if and only if for every $P \in X$, $s_P$ is not a zero divisor. That is, if $H$ is the subscheme of $X$ cut out by $s$, then $\sI_{H,X}$ is a line bundle if and only if $s$ does not vanish on any associated component of $X$.

 For an $S_2$, $G_1$ scheme and an arbitrary coherent sheaf $\sF$, the $n$-th \emph{reflexive power} is
\begin{equation*}
\sF^{[n]} :=
\left\{
\begin{matrix}
(\sF^{\otimes n})^{**}  & \textrm{ if } n \geq 0  \\
(\sF^{\otimes (-n)})^* & \textrm{ if } n < 0 .
\end{matrix}
\right.
\end{equation*}
That is, it is the reflexive hull of the $n$-th tensor power. A coherent sheaf $\sF$ is a $\bQ$-line bundle, if  $\sF^{[n]}$ is a line bundle for some $n>0$.   Note, that if $f : X \to B$ is a family with $\omega_{X_b}$ a $\bQ$-line bundle for all $b \in B$, then $\omega_{X/B}$ is not necessarily a $\bQ$-line bundle  \cite[Section 14.A]{HCD_KSJ_RA}. However, if $X_b$ are $S_2,G_1$ schemes and $\omega_{X/B}$ a $\bQ$-line bundle then $\omega_{X_b}$ is a $\bQ$-line bundle for all $b \in B$ (c. f. \cite[Lemma 2.6]{HB_KSJ_RPB}). 

\section{Background on base change for dualizing complexes}
\label{sec:background}

This section contains a general overview on the base change properties of  relative dualizing complexes and  relative canonical sheaves. For experts, some of the statements might be well known, still they are included here for completeness and easier reference. Readers more interested in  geometric arguments and willing to accept the statements of this section without  proofs should feel free to skip to the next section.

Recall that the relative dualizing complex $\omega_{X/B}^{\bullet}$ of a quasi-projective family $f : X \to B$ is defined as $f^! \sO_B$. Here $f^!$ is the functor constructed in \cite[Corollary VII.3.4.a]{HR_RAD}. The following technical point should be noted here.

\begin{rem}
There is also another definition of $f^!$ in \cite{NA_TGD} as the right adjoint of $R f_*$. The two definitions coincide for proper morphisms by \cite[Theorem VII.3.3]{HR_RAD} and \cite[Section 6]{NA_TGD}. However, not in general. For example, if $X$ is smooth affine variety over $B = \Spec k$ and $f$ is the structure map, then Hartshorne's definition of $f^! \sO_{\Spec k}$ lives in cohomological degree $- \dim X$ while Neeman's in cohomological degree zero. See \cite[Part I, Exercise 4.2.3.d]{LJ_HM_FOG} for more details on the differences (Neeman's $f^!$ is denoted $f^{\times}$ there).  We use Hartshorne's definition in the present article. 
\end{rem}

The dualizing complex of a single scheme $Y$ is $\omega_Y^{\bullet} :=\omega_{Y/\Spec k}^{\bullet}$. The following fact is  needed in the proof of  Proposition \ref{prop:base_change_complex}.\ref{itm:base_change_complex:S2}. It follows from the invariance of the length of  maximal regular sequences (\cite[Theorem 1.2.5]{BW_HJ_CMR}).

\begin{fact}
\label{fact:S_d_hyperplane}
Let $P$ be a point of a subscheme $H$ of a scheme $X$ such that $(\sI_{H,X})_P$ is a line bundle, $d$ an integer, and  $\sF$ a coherent $S_1$ sheaf with full support (i.e., $\supp \sF = X$) on $X$. Then 
\begin{enumerate}
 \item  \label{itm:S_d_hyperplane:first} $\depth \sF_P \geq d \ \Leftrightarrow \   \depth (\sF|_H)_P \geq d-1$ (here $\sF|_H$ is regarded as a sheaf on $H$, not on $X$),
 \item  \label{itm:S_d_hyperplane:second} $\depth \sF_P \geq \min \{  d, \dim \sO_{X,P} \} \ \Leftrightarrow \  \depth (\sF|_H)_P \geq \min \{ d-1, \dim \sO_{H,P} \}$.
\end{enumerate}

\end{fact}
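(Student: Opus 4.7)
The plan is to reduce both statements to the standard depth-one drop under division by a regular element, using the hypotheses to guarantee regularity on $\sF$.

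First I would choose a local generator $t \in \sO_{X,P}$ of $(\sI_{H,X})_P$; this is possible by the assumption that the ideal sheaf is a line bundle at $P$. The key technical point is to check that $t$ is a non-zero-divisor on $\sF_P$. Because $\sI_{H,X}$ is a line bundle at $P$ and hence locally generated by a non-zero-divisor of $\sO_{X,P}$ (this is exactly the last equivalence recorded in the notation section), $t$ avoids every associated prime of $\sO_{X,P}$, in particular every minimal prime. On the other hand, the $S_1$ condition with full support forces the associated primes of $\sF_P$ to be minimal primes of $\sO_{X,P}$: if $\frp$ were an embedded associated prime of $\sF_P$, then $\sF_{\frp}$ would have depth $0$ at the maximal ideal of $\sO_{X,\frp}$, contradicting $\depth \sF_{\frp} \geq \min\{1, \dim \sO_{X,\frp}\}$ together with $\dim \sO_{X,\frp} \geq 1$ and $\sF_{\frp}\neq 0$. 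Hence $t$ lies outside every associated prime of $\sF_P$, so multiplication by $t$ is injective on $\sF_P$.

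Next I would invoke the standard depth lemma: for the $\sF_P$-regular element $t \in m_{X,P}$,
\begin{equation*}
\depth_{\sO_{X,P}} \sF_P = 1 + \depth_{\sO_{X,P}} (\sF_P / t\sF_P).
\end{equation*}
Since $\sF_P/t\sF_P = (\sF|_H)_P$ as an $\sO_{H,P} = \sO_{X,P}/(t)$-module, and depth is computed by regular sequences in the maximal ideal (which maps surjectively onto $m_{H,P}$), the right-hand depth agrees whether taken over $\sO_{X,P}$ or over $\sO_{H,P}$. Thus
\begin{equation*}
\depth \sF_P = 1 + \depth (\sF|_H)_P,
\end{equation*}
which immediately yields statement \eqref{itm:S_d_hyperplane:first}.

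For \eqref{itm:S_d_hyperplane:second} I would combine the displayed identity with the elementary dimension relation $\dim \sO_{H,P} = \dim \sO_{X,P} - 1$, which holds because $t$ is a non-zero-divisor in $\sO_{X,P}$. Splitting into the two cases $d \leq \dim \sO_{X,P}$ and $d > \dim \sO_{X,P}$ and using that $\depth \sF_P \leq \dim \sO_{X,P}$ (and similarly for $\sF|_H$), one checks in each case that $\min\{d,\dim \sO_{X,P}\} = \min\{d-1,\dim \sO_{H,P}\}+1$ in the regime that matters, so the inequality $\depth \sF_P \geq \min\{d,\dim \sO_{X,P}\}$ is equivalent to $\depth (\sF|_H)_P \geq \min\{d-1, \dim \sO_{H,P}\}$. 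The only subtlety in these manipulations is the boundary case $d > \dim \sO_{X,P}$, where one must use that $\depth \sF_P$ and $\depth(\sF|_H)_P$ are bounded above by the respective local dimensions; once this is noted the verification is mechanical and the argument is complete.
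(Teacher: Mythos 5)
Your proof is correct and follows essentially the route the paper intends: the paper gives no argument beyond citing the invariance of lengths of maximal regular sequences, which is exactly the depth-drop identity $\depth \sF_P = 1 + \depth(\sF|_H)_P$ you use, and your verification that the local generator of $(\sI_{H,X})_P$ is $\sF_P$-regular via $S_1$ plus full support is precisely the content those hypotheses are there to supply. One small remark: in part (2) no case analysis or bound $\depth \leq \dim$ is needed, since $\min\{d-1,\dim \sO_{H,P}\}+1=\min\{d,\dim \sO_{X,P}\}$ holds identically once $\dim \sO_{H,P}=\dim \sO_{X,P}-1$ is established.
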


\begin{prop}
\label{prop:base_change_complex}
Given a flat family $f: \sH \to B$ of schemes of pure dimension $n$ over a smooth base, a point $0 \in B$ and a single quasi-projective scheme $X$ of pure dimension $n$,
\begin{enumerate}
\item \label{itm:base_change_complex:restriction_isomorphism}
there is an isomorphism 
\begin{equation}
\label{eq:base_change_complex:restriction_isomorphism}
\omega_{\sH/B}^{\bullet}|_{\sH_0}^L \cong \omega_{\sH_0}^{\bullet} ,
\end{equation}
\item \label{itm:base_change_complex:restriction_homomorphism}   fixing any  isomorphism in \eqref{eq:base_change_complex:restriction_isomorphism}, yields  natural homomorphism
\begin{equation}
\label{eq:base_change_complex:restriction_homomorphism}
\omega_{\sH/B}|_{\sH_0} \to \omega_{\sH_0} ,
\end{equation}

\item \label{itm:base_change_complex:relative_over_smooth} if $B$ is  of pure dimension $d$ with $\sO_B \cong \omega_B$, then $\omega_{\sH/B}^{\bullet} \cong \omega_{\sH}^{\bullet}[-d]$,
\item \label{itm:base_change_complex:open_set} if $V \subseteq X$ is any open set, then  $\omega_{V}^{\bullet} \cong \omega_X^{\bullet}|_V$,
\item \label{itm:base_change_complex:relative_open_set} if $U \subseteq \sH$ is any open set, then  $\omega_{U/B}^{\bullet} \cong \omega_{\sH/B}^{\bullet}|_U$,
\item \label{itm:base_change_complex:Sd_at_closed_point} if $P \in X$ is a point, then $\depth_P \sO_X = d$ if and only if $h^i(\omega_{X}^{\bullet})_P$ is zero for $i > - d - \dim_X P$ and non-zero for $i=-d - \dim_X P$,
\item \label{itm:base_change_complex:relative_Sd_at_closed_point} if $P \in \sH$ is a point, then $\depth_P \sO_{\sH_{f(P)}}=d$   if and only if $h^i(\omega_{\sH/B}^{\bullet})_P$ is zero for $i > - d - \dim_{\sH_{f(P)}} P$ and non-zero for $i=-d - \dim_{\sH_{f(P)}} P$,
\item \label{itm:base_change_complex:omega_S2} $\omega_X$ is $S_2$,
\item \label{itm:base_change_complex:relative_omega_S2} $\omega_{\sH/B}$ is $S_2$,
\item \label{itm:base_change_complex:Cohen_Macaulay} if the fibers of $f$ are Cohen-Macaulay  then $\omega_{\sH/B} \cong \omega_{\sH/B}^{\bullet}$ and consequently \eqref{eq:base_change_complex:restriction_homomorphism} is an isomorphism,
\item \label{itm:base_change_complex:S2} if $\sH_0$ is $S_2$ and $G_1$, then \eqref{eq:base_change_complex:restriction_homomorphism} is isomorphism if and only if  
\begin{equation}     
\label{eq:base_change_complex:S3_over_H0}                                                                                                                                                               \depth \omega_{\sH/B,P} \geq \min \{  3, \dim \sO_{\sH,P} \} \textrm{, for every }  P \in \sH_0.
\end{equation}
 Furthermore if \eqref{eq:base_change_complex:S3_over_H0} is not satisfied then not only \eqref{eq:base_change_complex:restriction_homomorphism} is not an isomorphism, but $\omega_{\sH/B}|_{\sH_0} \not\cong \omega_{\sH_0}$.
\end{enumerate}
\end{prop}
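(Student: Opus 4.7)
The overall strategy is to extract (1)--(9) from formal properties of $f^!$ and local duality, and to focus the argument on (10) and (11).

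For (1), the smoothness of $B$ makes $\iota_0:\{0\}\hookrightarrow B$ a regular immersion, so the standard base change for the twisted inverse image in the Cartesian square with vertical arrows $f$ and $f_0$ gives $L\iota'{}^*f^!\sO_B\cong f_0^!L\iota_0^*\sO_B=f_0^!\sO_{\{0\}}=\omega_{\sH_0}^\bullet$, which is \eqref{eq:base_change_complex:restriction_isomorphism}. Part (2) follows by a cohomology sheaf computation: since $\omega_{\sH/B}^\bullet$ lies in degrees $\geq -n$ (pure relative dimension $n$), the degree $-n$ cohomology of $\omega_{\sH/B}^\bullet|_{\sH_0}^L$ coincides with the ordinary tensor $\omega_{\sH/B}\otimes\sO_{\sH_0}=\omega_{\sH/B}|_{\sH_0}$, and using (1) this produces a natural edge map to $h^{-n}(\omega_{\sH_0}^\bullet)=\omega_{\sH_0}$. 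Parts (3)--(5) come straight from $\omega_B^\bullet\cong\sO_B[d]$ when $\sO_B\cong\omega_B$ together with the compatibility of $f^!$ with shifts and open immersions. For (6)--(9), local duality says that at a point $P$ of codimension $c$ in $X$ the stalk $(\omega_X^\bullet)_P$ is a dualizing complex of $\sO_{X,P}$ whose top nonvanishing cohomology sits in degree $-\depth\sO_{X,P}-\dim_X P$; this gives (6), with (7) being the fiberwise version using (3). Then (8) and (9) follow by applying these formulas to $\omega_X$, respectively $\omega_{\sH/B}$, combined with the vanishing range $[-n,0]$ of the dualizing complex and the identification of $h^{-n}$ with the canonical sheaf.

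For (10), if the fibers of $f$ are Cohen-Macaulay, then combining (1) with $\omega_{\sH_b}^\bullet=\omega_{\sH_b}[n]$ shows that $\omega_{\sH/B}^\bullet|_{\sH_b}^L$ has no cohomology above degree $-n$ for every $b$. Induction on $\dim B$ reduces to the curve case, where the long exact sequence associated to multiplication by a uniformizer of $B$ on $\omega_{\sH/B}^\bullet$ forces this uniformizer to act surjectively on $h^i(\omega_{\sH/B}^\bullet)$ for all $i>-n$; Nakayama at points of $\sH_b$ then makes $h^i(\omega_{\sH/B}^\bullet)_P=0$ in a neighborhood of the fiber, yielding $\omega_{\sH/B}^\bullet\cong\omega_{\sH/B}[n]$ and consequently the isomorphism in \eqref{eq:base_change_complex:restriction_homomorphism}.

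The decisive piece is (11). Locally $\sH_0\subset\sH$ is cut by a regular parameter of $B$, so the second part of Fact~\ref{fact:S_d_hyperplane} identifies the depth bound \eqref{eq:base_change_complex:S3_over_H0} at $P$ with $\omega_{\sH/B}|_{\sH_0}$ being $S_2$ at $P$. At every codimension-$\leq 1$ point of $\sH_0$ the $G_1$ hypothesis forces Gorenstein, hence Cohen-Macaulay, so (10) applied locally makes \eqref{eq:base_change_complex:restriction_homomorphism} an isomorphism on an open $U\subset\sH_0$ whose complement has codimension $\geq 2$. Since $\omega_{\sH_0}$ is $S_2$ by (8) and $\sH_0$ is $S_2,G_1$, two $S_2$ sheaves on $\sH_0$ agreeing on such a $U$ are canonically isomorphic, so \eqref{eq:base_change_complex:restriction_homomorphism} is an isomorphism iff $\omega_{\sH/B}|_{\sH_0}$ is itself $S_2$. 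The "furthermore" clause is immediate since depth is an abstract invariant of a coherent sheaf: if $\omega_{\sH/B}|_{\sH_0}$ fails $S_2$ then it cannot be abstractly isomorphic to the $S_2$ sheaf $\omega_{\sH_0}$. The main obstacle is the Nakayama step in (10), promoting a fiberwise vanishing of higher cohomology of $\omega_{\sH/B}^\bullet$ to a neighborhood of the fiber, which then powers the codimension-one comparison used in (11).
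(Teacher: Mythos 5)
Your overall architecture matches the paper's for items (2)--(9) and (11), and your argument for (10) (uniformizer acting surjectively on $h^i(\omega_{\sH/B}^{\bullet})$ for $i>-n$, then Nakayama) is a nice self-contained proof where the paper simply cites Conrad. But there are two places where the justification does not hold up as written.

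First, item (1). You invoke ``standard base change for the twisted inverse image'' along the regular immersion $\{0\}\hookrightarrow B$. This is exactly the step the paper flags as unavailable off the shelf: the base-change statements in the standard references apply either to \emph{flat} base change morphisms or to \emph{proper} $f$, and here the base change is a closed immersion while $f$ is only quasi-projective. One cannot reduce to the proper case by compactifying, because it is not clear that a flat morphism can be compactified to a flat one, and without flatness of the compactification the relevant square is no longer tor-independent. The paper's workaround is an ad-hoc argument: reduce by flat base change to $B=\Spec k[[x_1,\dots,x_m]]$, induct on $m$, and compare the triangle obtained by tensoring $\omega_{\sH/B}^{\bullet}$ with $\sO_\sH\xrightarrow{\,x_m\,}\sO_\sH$ against the one obtained by applying $\sR\sHom(-,\omega_{\sH/B}^{\bullet})$ to it, finishing with Grothendieck duality for the finite inclusion of the fiber. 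Note that this is precisely the multiplication-by-a-parameter device you deploy yourself in (10); you have the right tool but do not use it where it is actually needed. Either reproduce that argument or cite a tor-independent base-change theorem for $f^!$ valid for separated finite-type (non-proper) morphisms.

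Second, item (2). Your claim that the degree $-n$ cohomology of $\omega_{\sH/B}^{\bullet}|^{L}_{\sH_0}$ ``coincides with the ordinary tensor $\omega_{\sH/B}\otimes\sO_{\sH_0}$'' is false in general; if it were true, then combined with (1) it would make \eqref{eq:base_change_complex:restriction_homomorphism} an isomorphism for every flat family, contradicting Theorems \ref{thm:main1} and \ref{thm:main2}. What is true is that the truncation triangle yields an exact sequence
\begin{equation*}
0 \to \omega_{\sH/B}\otimes\sO_{\sH_0} \to h^{-n}\bigl(\omega_{\sH/B}^{\bullet}|^{L}_{\sH_0}\bigr) \to \sTor^1\bigl(h^{-(n-1)}(\omega_{\sH/B}^{\bullet}),\sO_{\sH_0}\bigr) \to \cdots,
\end{equation*}
and the rightmost term is exactly the obstruction the paper is built around (cf.\ \eqref{eq:tor}). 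The conclusion of (2) survives, since the desired homomorphism is just the left-hand injection composed with the isomorphism from (1) --- equivalently, apply $h^{-n}\bigl((\,\_\,)|^L_{\sH_0}\bigr)$ to the natural map $\omega_{\sH/B}[n]\to\omega_{\sH/B}^{\bullet}$ --- but the ``coincidence'' should be deleted, as it asserts more than is true and would trivialize the rest of the paper.
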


\begin{proof}

First, we prove point \eqref{itm:base_change_complex:restriction_isomorphism}. It will be an ad-hoc proof, since we have not found the exact statement  in the literature. The statements we found are either only for flat base change morphisms \cite[Corollary VII.3.4.a]{HR_RAD} or for proper $f$ \cite[Part I, Corollary 4.4.3]{LJ_HM_FOG}. Note, that however, it might seem that point \eqref{itm:base_change_complex:restriction_isomorphism} follows from base change for proper $f$, to the best knowledge of the author, it is not clear whether one can compactify a flat morphism to a flat morphism. 

First,  by \cite[Corollary VII.3.4.a]{HR_RAD}, $\omega_{\sH/B}^{\bullet}$ is compatible with flat base change. So, since $\Spec \widehat{\sO_{B,0}}$ is flat over $B$, we may assume that $B$ is the spectrum of a complete local ring of a smooth scheme and $0$ is the unique closed point. In particular, then $B \cong \Spec k[[x_1,\dots,x_{m}]]$. Hence, by induction on $m$, it is enough to prove that 
\begin{equation}
\label{eq:to_prove}
\omega_{\sH/B}^{\bullet}|_Y \cong \omega_{Y/C}^{\bullet} ,
\end{equation}
where  $C := \Spec k[[x_1,\dots,x_{m-1}]]$ and $Y:= \sH \times_B C$. To prove \eqref{eq:to_prove} first consider the usual exact triangle
\begin{equation}
\label{eq:structure_sheaves}
\xymatrix{
\sO_{\sH} \ar[r]^{\mu} & \sO_{\sH} \ar[r] & \sO_{Y} \ar[r]^{+1} & ,
}
\end{equation}
where $\mu$ is multiplication by $x_m$. Tensoring \eqref{eq:structure_sheaves} by $\omega_{\sH/B}^{\bullet}$ yields
\begin{equation}
\label{eq:tensored}
\xymatrix{
 \omega_{\sH/B}^{\bullet} \ar[rr]^{\mu \otimes \id_{\omega_{\sH/B}^{\bullet}}} & &  \omega_{\sH/B}^{\bullet} \ar[r] & \omega_{\sH/B}^{\bullet}|_{Y}^L \ar[r]^-{+1} & .
}
\end{equation}
On the other hand, applying $\sR \sHom ( \underline{\quad} , \omega_{\sH/B}^{\bullet})$ and a rotation to \eqref{eq:structure_sheaves} yields
\begin{equation}
\label{eq:rotated}
\xymatrix{
 \omega_{\sH/B}^{\bullet} \ar[rr]^{\mu \otimes \id_{\omega_{\sH/B}^{\bullet}}} & & \omega_{\sH/B}^{\bullet} \ar[r]  &  \sR \sHom (\sO_Y,\omega_{\sH/B}^{\bullet} )[1]    \ar[r]^-{+1}   &  .
}
\end{equation}
So, \eqref{eq:tensored} and \eqref{eq:rotated} together imply that
\begin{equation}
\label{eq:res}
 \sR \sHom (\sO_Y,\omega_{\sH/B}^{\bullet} )[1] \cong \omega_{\sH/B}^{\bullet}|_{Y} .
\end{equation}
Denote by $\iota$ and $g$ the maps $Y \to X$ and  $Y \to C$, respectively. The following stream of isomorphisms finishes then the proof of point \eqref{itm:base_change_complex:restriction_isomorphism}.
\begin{multline*}
\label{eq:rel_can}
\omega_{\sH/B}^{\bullet}|_Y 
\cong \underbrace{ \sR \sHom_X( \sR \iota_* \sO_Y,  \omega_{\sH/B}^{\bullet} ) [1]}_{\textrm{by \eqref{eq:res}}}
\cong \underbrace{ \sR \sHom_Y( \sO_Y,  \iota^! \omega_{\sH/B}^{\bullet} ) [1]}_{\textrm{by Grothendieck duality}} 
\\
\cong \iota^! \omega_{\sH/B}^{\bullet} [1] \cong 
\underbrace{\iota^! f^! \sO_B [1]}_{\textrm{definition of $\omega_{\sH/B}^{\bullet}$}} \cong 
\underbrace{i^! f^! \omega_B^{\bullet} [-(m-1)]}_{\omega_B^{\bullet}[-m] \cong \sO_B}
\\ \cong \underbrace{\omega_Y^{\bullet} [-(m-1)]}_{\omega_Y^{\bullet} \cong (f \circ \iota)^! \omega_B^{\bullet}}
\cong \underbrace{g^! \omega_C^{\bullet} [-(m-1)]}_{\omega_Y^{\bullet} \cong g^! \omega_C^{\bullet}}
\underbrace{\cong g^! \sO_C }_{\omega_C^{\bullet}[-(m-1)] \cong \sO_C} 
\cong \omega_{Y/C}^{\bullet}
.
\end{multline*}

To prove point \eqref{itm:base_change_complex:restriction_homomorphism}, notice that since $\omega_{\sH/B} := h^{-n}( \omega_{\sH/B}^{\bullet})$ is the lowest cohomology sheaf of $\omega_{\sH/B}^{\bullet}$, there is a homomorphism
\begin{equation}
\label{eq:base_change_complex:lowest_coho}
\omega_{\sH/B}[n] \to   \omega_{\sH/B}^{\bullet} .
\end{equation}
Applying $( \_ )|^L_{\sH_0}$ to \eqref{eq:base_change_complex:lowest_coho} and then composing with the isomorphism given by \eqref{eq:base_change_complex:restriction_isomorphism} yields a homomorphism
\begin{equation}
\label{eq:base_change_complex:composition}
\omega_{\sH/B}[n]|^L_{\sH_0} \to \omega_{\sH_0}^{\bullet}.
\end{equation}
Finally taking $-n$-th cohomology sheaves of \eqref{eq:base_change_complex:composition} yields the restriction homomorphism of \eqref{eq:base_change_complex:restriction_homomorphism}.

Point \eqref{itm:base_change_complex:relative_over_smooth} is shown by the following line of isomorphisms.
\begin{equation*}
\omega_{\sH/B}^{\bullet} = f^! \sO_B \cong f^! \omega_B \cong f^! \omega_B^{\bullet}[-d] \cong \omega_{\sH}^{\bullet}[-d]
\end{equation*}

To prove point \eqref{itm:base_change_complex:open_set}, consider the following commutative diagram.
\begin{equation*}
\xymatrix{
V \ar[r]^{j} \ar[dr]^{\nu} & X \ar[d]^{\mu} \\
& \Spec k
}
\end{equation*}
Since $j$ is smooth of relative dimension $0$, using notations of \cite{HR_RAD}, $j^! \cong j^{\#} \cong j^*$ and then
\begin{equation*}
\omega_V^{\bullet} = \nu^! \sO_{\Spec k} \cong j^! \mu^! \sO_{\Spec k} \cong j^! \omega_X^{\bullet} \cong j^* \omega_X^{\bullet} = \omega_X^{\bullet}|_V.
\end{equation*}
Point \eqref{itm:base_change_complex:relative_open_set} follows from points \eqref{itm:base_change_complex:open_set} and \eqref{itm:base_change_complex:relative_over_smooth}.

Point \eqref{itm:base_change_complex:Sd_at_closed_point} is proved in \cite[Proposition 3.2]{KSJ_IC} (by taking $\sF:=\sO_X$). 
To prove point \eqref{itm:base_change_complex:relative_Sd_at_closed_point}, let $b := f(P)$ and consider the following Cartesian square.
\begin{equation*}
\xymatrix{
\sH \ar[d]^f & \sH' \ar[l]^{\lambda'} \ar[d]^{f'} \\
B & \Spec \sO_{B,b} \ar[l]^{\lambda}
}
\end{equation*}
By flat base change,
\begin{equation}
\label{eq:base_change_complex:flat_base_change}
(\lambda')^* \omega_{\sH/B}^{\bullet} \cong \omega_{\sH'/\Spec \sO_{B,b}}^{\bullet} .
\end{equation}
That is, 
\begin{multline*}
h^i \left(\omega_{\sH/B}^{\bullet} \right)_P \cong 
\underbrace{h^i \left(\omega_{\sH'/ \Spec \sO_{B,b}}^{\bullet} \right)_P}_{\textrm{by \eqref{eq:base_change_complex:flat_base_change}}}
\cong \underbrace{h^i \left(\omega_{\sH'}^{\bullet}[- \dim \sO_{B,b}] \right)_P}_{\textrm{by point \eqref{itm:base_change_complex:relative_over_smooth}}} \\ \cong h^{i- \dim \sO_{B,b} } \left( \omega_{\sH'}^{\bullet} \right)_P .
\end{multline*}
Hence 
\begin{equation*}
h^i(\omega_{\sH/B}^{\bullet})_P \textrm{ is } 
\left\{ 
\begin{matrix}
0  & \textrm{, if } i> -d - \dim_{\sH_b} P \\
\neq 0 & \textrm{, if } i = -d - \dim_{\sH_b} P 
\end{matrix}
\right.
\end{equation*}
\begin{equation*}
\Updownarrow
\end{equation*}
\begin{equation*}
h^i(\omega_{\sH'}^{\bullet})_P \textrm{ is } 
\left\{ 
\begin{matrix}
0  & \textrm{, if } i> -d - \dim_{\sH_b} P - \dim \sO_{B,b} \\
\neq 0 & \textrm{, if } i = -d - \dim_{\sH_b} P   - \dim \sO_{B,b}
\end{matrix}
\right.
\end{equation*}
\begin{equation*}
\Updownarrow
\end{equation*}
\begin{equation*}
 \depth_P \sO_{\sH'} = d + \dim \sO_{B,b}
\end{equation*}
\begin{equation*}
\underbrace{\Updownarrow }_{ \textrm{ By Fact \ref{fact:S_d_hyperplane} }  }
\end{equation*}
\begin{equation*}
 \left( \depth_P \sO_{(\sH')_{f(P)}} =  \right) \depth_P \sO_{\sH_{f(P)}} = d 
\end{equation*}

To prove point \eqref{itm:base_change_complex:omega_S2}, by point \eqref{itm:base_change_complex:open_set} we may assume that $X$ is affine. Using point \eqref{itm:base_change_complex:open_set} again we may also assume that it is projective. Then  \cite[Corollary 5.69]{KJ_MS_BG} concludes the proof of point \eqref{itm:base_change_complex:omega_S2}. Point \eqref{itm:base_change_complex:relative_omega_S2} is a consequence of point \eqref{itm:base_change_complex:omega_S2} and point \eqref{itm:base_change_complex:relative_over_smooth}. Point \eqref{itm:base_change_complex:Cohen_Macaulay} is shown in \cite[Theorem 3.5.1]{CB_GD}.

To prove point \eqref{itm:base_change_complex:S2}, notice that  by point \eqref{itm:base_change_complex:omega_S2}, $\omega_{\sH_0}$ is $S_2$. Also since $\sH_0$ is $G_1$, using point \eqref{itm:base_change_complex:Cohen_Macaulay}, the homomorphism $\omega_{\sH/B}|_{\sH_0} \to \omega_{\sH_0}$ is isomorphism in codimension one. Then by  \cite[Theorem 1.9 and Theorem 1.12]{HR_GD}, using that $\sH_0$ is $S_2$ and $G_1$, $\omega_{\sH/B}|_{\sH_0} \to \omega_{\sH_0}$ is isomorphism if and only if $\omega_{\sH/B}|_{\sH_0}$ is $S_2$. Finally, by Fact \ref{fact:S_d_hyperplane}.\ref{itm:S_d_hyperplane:second}, this is equivalent to \eqref{eq:base_change_complex:S3_over_H0}.

Notice that if \eqref{eq:base_change_complex:S3_over_H0} is not satisfied, then $\omega_{\sH/B}|_{\sH_0}$ is not $S_2$ over $\sH_0$. Hence in this case not only \eqref{eq:base_change_complex:restriction_homomorphism} can not be isomorphism, but any isomorphism between $\omega_{\sH/B}|_{\sH_0}$ and $\omega_{\sH_0}$ is impossible.
\end{proof}

\begin{rem}
A priori,  saying that \eqref{eq:base_change_complex:restriction_homomorphism} is an isomorphism is a stronger statement than that $\omega_{\sH/B}|_{\sH_0}$ is isomorphic to $\omega_{\sH_0}$. However, if $B$ is smooth, $\sH_0$ is projective, $S_2$ and $G_1$, they are equivalent by the following argument. In this case $\omega_{\sH_0}$ is $S_2$ and is a line bundle over the Gorenstein locus $U$. Assume that $\omega_{\sH/B}|_{\sH_0} \cong \omega_{\sH_0}$ via an arbitrary isomorphism $\alpha$. Then  $\omega_{\sH/B}|_{\sH_0}$ is also $S_2$ and a line bundle over $U$. Since both are $S_2$, homomorphisms $\omega_{\sH/B}|_{\sH_0} \to \omega_{\sH_0}$ are determined in codimension one, e.g., over $U$. Furthermore, any two isomorphism over $U$ between any two line bundles  differ by multiplication with an element of $H^0(U, \sO_{\sH_0})$, where  $H^0(U, \sO_{\sH_0}) \cong k^*$, by $\sH_0$ being $S_2$ and projective. Since the restriction of the natural morphism $\beta : \omega_{\sH/B}|_{\sH_0} \to \omega_{\sH_0}$ over $U$ is an isomorphism, $\alpha$ differs from $\beta$ over $U$ by a multiplication with an element of $k^*$. However, then the same is true over entire $X$, by the codimension one determination. Hence $\beta$ is also an isomorphism.
\end{rem}

Finally, we conclude with a statement about restriction behavior of relative dualizing complexes and relative canonical sheaves to hypersurfaces. For that we also need a lemma about flatness of hypersurfaces.

\begin{lem}
\label{lem:ideal_sheaves_line_bundles_flat}
If $f :\sX \to B$ is a flat morphism onto a smooth curve and $\sH \subseteq \sX$ is a subscheme for which $\sI_{\sH,\sX}$ is a line bundle, then the following are equivalent
\begin{enumerate}
\item \label{itm:ideal_sheaves_line_bundles_flat:fiber_ideals} $\sI_{\sH_b,\sX_b}$ is a line bundle for every $b \in B$, and
\item \label{itm:ideal_sheaves_line_bundles_flat:flat} $\sH$ is flat over $B$.
\end{enumerate}
In particular, if $f : \sX \to B$ is flat with fibers of pure dimension $n$ and $\sH \subseteq \sX$  is also flat  with $\sI_{\sH,\sX}$ a line bundle, then  fibers of $\sH$ are of pure dimension $n-1$.
\end{lem}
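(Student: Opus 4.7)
The plan is to derive the equivalence from the long exact sequence obtained by derived-restricting the tautological sequence $0\to \sI_{\sH,\sX}\to \sO_\sX\to \sO_\sH\to 0$ to the fibers of $f$. Since $\sX$ is flat over $B$, tensoring this short exact sequence over $\sO_B$ with $k(b)$ gives
\begin{equation*}
0\to \sTor_1^{\sO_B}(\sO_\sH,k(b))\to \sI_{\sH,\sX}\otimes_{\sO_B}k(b)\to \sO_{\sX_b}\to \sO_{\sH_b}\to 0,
\end{equation*}
where the middle sheaf is the line bundle $\sI_{\sH,\sX}|_{\sX_b}$ (restriction of a line bundle stays a line bundle) and the image of the map $\sI_{\sH,\sX}|_{\sX_b}\to \sO_{\sX_b}$ is, by definition, $\sI_{\sH_b,\sX_b}$.

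Next I would read off the fiberwise equivalence. If the $\sTor_1$ term vanishes at $b$, then $\sI_{\sH_b,\sX_b}\cong \sI_{\sH,\sX}|_{\sX_b}$ is a line bundle. Conversely, if $\sI_{\sH_b,\sX_b}$ is a line bundle, then the surjection $\sI_{\sH,\sX}|_{\sX_b}\twoheadrightarrow \sI_{\sH_b,\sX_b}$ is a surjection between two line bundles on $\sX_b$, which is automatically an isomorphism (at each stalk it is multiplication by a unit of a rank-one free module), forcing the kernel $\sTor_1^{\sO_B}(\sO_\sH,k(b))$ to vanish. Globalizing over $b\in B$, \eqref{itm:ideal_sheaves_line_bundles_flat:fiber_ideals} is equivalent to the vanishing of $\sTor_1^{\sO_B}(\sO_\sH,k(b))$ for every $b\in B$. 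Finally, since $B$ is a smooth curve, at each $b\in B$ the resolution $0\to \sO_{B,b}\xrightarrow{t_b}\sO_{B,b}\to k(b)\to 0$ by a uniformizer computes $\sTor_1^{\sO_B}(\sO_\sH,k(b))$ as the $t_b$-torsion of $(\sO_\sH)_{|f^{-1}(b)}$, so the simultaneous vanishing for all $b$ is equivalent to torsion-freeness of $\sO_\sH$ over $\sO_B$, hence to flatness of $\sH$ over $B$. This gives \eqref{itm:ideal_sheaves_line_bundles_flat:fiber_ideals}$\Leftrightarrow$\eqref{itm:ideal_sheaves_line_bundles_flat:flat}.

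For the concluding assertion, assuming $\sH$ is flat we now know that $\sI_{\sH_b,\sX_b}$ is a line bundle for every $b$. By the characterization recalled in Section \ref{sec:notation}, this means $\sH_b$ is, locally at each of its points, cut out in $\sX_b$ by a single element which is a non-zero-divisor; then Krull's principal ideal theorem together with the pure $n$-dimensionality of $\sX_b$ forces every irreducible component of $\sH_b$ through such a point to have dimension exactly $n-1$, so $\sH_b$ is of pure dimension $n-1$. I do not anticipate a serious obstacle here; the only subtle point is keeping the two a priori different objects $\sI_{\sH,\sX}|_{\sX_b}$ and $\sI_{\sH_b,\sX_b}$ distinct and using the line-bundle rigidity to force their comparison map to be an isomorphism exactly when the $\sTor_1$ obstruction vanishes.
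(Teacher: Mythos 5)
Your argument is correct and is essentially the paper's proof in different packaging: the paper's snake-lemma diagram at a point $P$ identifies $\ker(\cdot\, t)$ on $\sO_{\sH,P}$ with $\ker(\cdot\, s)$ on $\sO_{\sX_b,P}$, and that common kernel is exactly your $\sTor^1_{\sO_B}(\sO_{\sH},k(b))_P$ read off from the two ends of your four-term exact sequence. The one step you add --- that a surjection of line bundles is an isomorphism, so the $\sTor^1$ term is precisely the obstruction to $\sI_{\sH,\sX}|_{\sX_b}\to\sI_{\sH_b,\sX_b}$ being injective --- correctly replaces the paper's use of the local generator $s$ being a non-zero-divisor, and your dimension count for the addendum via Krull's principal ideal theorem matches what the paper leaves implicit.
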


\begin{proof} 
We prove only the equivalence statement, since the addendum follows from $\sI_{\sH_b,\sX_b}$ being line bundles. 

The statement is local on $\sH$. So, fix $P \in \sH$ and let $Q:=f(P)$. By \cite[Proposition 9.1A.a]{HR_AG}, $\sH$ (resp. $\sX$) is flat over $B$  at $P$ if and only if the homomorphisms $\sO_{\sH,P} \to \sO_{\sH,P}$ (resp. $\sO_{\sX,P} \to \sO_{\sX,P}$) induced by multiplication with some power of the local parameter $t$ of $\sO_{B,Q}$ is injective. Furthermore, by induction this is equivalent to the injectivity of  multiplication with the first power $t$. 

The assumptions of the lemma state that $(\sI_{\sH,\sX})_P \subseteq \sO_{\sX,P}$ is generated by a non-zero divisor element $s$. Hence there is a commutative diagram with exact rows and columns as follows.
\begin{equation*}
\xymatrix{
& & 0 & 0 \\
0 \ar[r] & \ker ( \cdot s )  \ar[r]& \sO_{\sX_Q,P} \ar[r]^{\cdot s} \ar[u] & \sO_{\sX_Q,P} \ar[u] \\
& 0 \ar[r] & \sO_{\sX,P} \ar[r]^{\cdot s} \ar[u] & \sO_{\sX,P} \ar[r]^{\cdot s} \ar[u] & \sO_{\sH,P} \ar[r] & 0 \\
& 0 \ar[r] & \sO_{\sX,P} \ar[r]^{\cdot s} \ar[u]^{\cdot t} & \sO_{\sX,P} \ar[r]^{\cdot s} \ar[u]^{\cdot t} & \sO_{\sH,P} \ar[r] \ar[u]^{\cdot t} & 0 \\
& & 0 \ar[u] & 0 \ar[u] & \ker ( \cdot t) \ar[u] \\
& & & & 0 \ar[u]
}
\end{equation*}
By snake lemma applied vertically, $\ker ( \cdot t) = \ker(\cdot s)$. In particular, $\ker ( \cdot t) =0$ if and only if $\ker ( \cdot s)=0$. The former is equivalent to flatness of $\sH \to B$ at $P$ while the latter is equivalent to $\sI_{\sH_Q,\sX_Q}$ being a line bundle at $P$.
\end{proof}

\begin{prop}
\label{prop:restriction_Cartier_divisor}
If $\sX \to B$ is a flat family of pure $n$-dimensional schemes, and $\sH \subseteq \sX$ a flat   subscheme such that $\sI_{\sH,\sX}$ is a line bundle, then
\begin{enumerate}
\item \label{itm:restriction_Cartier_divisor:complex} there is an isomorphism
\begin{equation}
\label{eq:restriction_Cartier_divisor:complex}
\omega_{\sX/B}^{\bullet}(\sH)|^L_{\sH} [-1] \cong \omega_{\sH/B}^{\bullet} ,
\end{equation}
\item \label{itm:restriction_Cartier_divisor:sheaf} there is a homomorphism
\begin{equation}
\label{eq:restriction_Cartier_divisor:sheaf}
\omega_{\sX/B}(\sH)|_{\sH} \to \omega_{\sH/B} ,
\end{equation}
which is isomorphism over the relative Cohen-Macaulay locus of $\sH \to B$.
\end{enumerate}
\end{prop}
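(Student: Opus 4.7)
The plan is to deduce part \eqref{itm:restriction_Cartier_divisor:complex} from the standard Grothendieck-duality formula for a regular closed immersion of codimension one, and then to obtain part \eqref{itm:restriction_Cartier_divisor:sheaf} by extracting the lowest non-vanishing cohomology sheaf from both sides of \eqref{eq:restriction_Cartier_divisor:complex}.

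For part \eqref{itm:restriction_Cartier_divisor:complex}, first observe that because $\sI_{\sH,\sX}$ is a line bundle, the closed immersion $\iota\colon \sH \hookrightarrow \sX$ is a regular immersion of codimension one with conormal bundle $\sO_\sX(-\sH)|_\sH$. The standard formula for $\iota^!$ on such an immersion (see, e.g., \cite[III.7]{HR_RAD} or \cite[Part I, \S 4]{LJ_HM_FOG}) then gives, for every $F^\bullet \in D(qc/\sX)$,
\begin{equation*}
\iota^! F^\bullet \;\cong\; F^\bullet|^L_\sH \otimes \sO_\sX(\sH)|_\sH\,[-1] \;\cong\; F^\bullet(\sH)|^L_\sH\,[-1].
\end{equation*}
Setting $F^\bullet = \omega_{\sX/B}^\bullet = f^! \sO_B$ and using $(f\circ \iota)^! = \iota^! f^!$ produces \eqref{eq:restriction_Cartier_divisor:complex}.

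For part \eqref{itm:restriction_Cartier_divisor:sheaf}, Lemma \ref{lem:ideal_sheaves_line_bundles_flat} ensures that the fibers of $\sH \to B$ have pure dimension $n-1$, so $\omega_{\sH/B} = h^{-(n-1)}(\omega_{\sH/B}^\bullet)$, which by part \eqref{itm:restriction_Cartier_divisor:complex} equals $h^{-n}(\omega_{\sX/B}^\bullet(\sH)|^L_\sH)$. I would construct the morphism from the natural truncation map $\omega_{\sX/B}[n] \to \omega_{\sX/B}^\bullet$, available because $\omega_{\sX/B} = h^{-n}(\omega_{\sX/B}^\bullet)$ is the lowest non-zero cohomology sheaf: tensoring with the line bundle $\sO_\sX(\sH)$ and then derived-restricting to $\sH$, and finally passing to $h^{-n}$, yields the map \eqref{eq:restriction_Cartier_divisor:sheaf}. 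To identify the source explicitly, I would resolve $\sO_\sH$ by $0 \to \sO_\sX(-\sH) \to \sO_\sX \to \sO_\sH \to 0$ and check that $h^0(\omega_{\sX/B}(\sH) \otimes^L \sO_\sH) = \omega_{\sX/B}(\sH)|_\sH$; this reduces to showing that a local defining equation of $\sH$ acts as a non-zero-divisor on each stalk of $\omega_{\sX/B}$, which follows from $\omega_{\sX/B}$ being $S_2$ with full support on $\sX$ (Proposition \ref{prop:base_change_complex}.\ref{itm:base_change_complex:relative_omega_S2}).

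Finally, at any $P$ in the relative Cohen--Macaulay locus of $\sH \to B$, the depth of $\sO_{\sX_{f(P)},P}$ equals $n$ by Fact \ref{fact:S_d_hyperplane}, so Proposition \ref{prop:base_change_complex}.\ref{itm:base_change_complex:relative_Sd_at_closed_point} forces $\omega_{\sX/B}^\bullet$, and hence $\omega_{\sX/B}^\bullet(\sH)|^L_\sH$, to be concentrated in the single degree $-n$ near $P$. The truncation map is then a quasi-isomorphism at $P$, so the induced map \eqref{eq:restriction_Cartier_divisor:sheaf} is an isomorphism there. The main obstacle will be to keep the shifts and the interplay between $\otimes^L$ and $|^L_\sH$ straight, particularly the $[-1]$ coming from the codimension-one regular immersion, but no new geometric input is required beyond Grothendieck duality, Lemma \ref{lem:ideal_sheaves_line_bundles_flat}, and the depth-sensitive statements already in Proposition \ref{prop:base_change_complex}.
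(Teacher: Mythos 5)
Your proof is correct and follows essentially the same route as the paper: part \eqref{itm:restriction_Cartier_divisor:sheaf} is obtained identically, by twisting and restricting the truncation map $\omega_{\sX/B}[n]\to\omega_{\sX/B}^{\bullet}$ and using that $\omega_{\sX/B}^{\bullet}$ is concentrated in degree $-n$ near a relatively Cohen--Macaulay point of $\sH$. The only (cosmetic) divergence is in part \eqref{itm:restriction_Cartier_divisor:complex}: you invoke the standard formula $\iota^{!}(-)\cong L\iota^{*}(-)\otimes\sO_{\sX}(\sH)|_{\sH}[-1]$ for the codimension-one regular immersion, whereas the paper rederives exactly this by identifying $\omega_{\sH/B}^{\bullet}$ with $\sR\sHom_{\sX}(\sO_{\sH},\omega_{\sX/B}^{\bullet})$ via Grothendieck duality and comparing two exact triangles built from $0\to\sO_{\sX}(-\sH)\to\sO_{\sX}\to\sO_{\sH}\to 0$; your packaging even has the small advantage of producing a canonical isomorphism, while the paper's triangle comparison is non-canonical (as it itself remarks).
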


\begin{proof}
Notice first that by Lemma \ref{lem:ideal_sheaves_line_bundles_flat}, $\sH$ has equidimensional fibers and hence $\omega_{\sH/B}$ is defined indeed. To prove point \eqref{itm:restriction_Cartier_divisor:complex}, consider the exact sequence
\begin{equation}
\label{eq:hypersurface}
\xymatrix{
0 \ar[r] & \sO_{\sX}  \ar[r] & \sO_{\sX}(\sH) \ar[r] & \sO_{\sH}(\sH) \ar[r] & 0 .
}
\end{equation}
Applying $( \_ )\otimes^L \omega_{\sX/B}^{\bullet}$ to \eqref{eq:hypersurface}, and then translating yields the exact triangle 
\begin{equation}
\label{eq:after_tensoring}
\xymatrix{
 \omega_{\sX/B}^{\bullet}(\sH)|^L_{\sH} [-1] \ar[r] & \omega_{\sX/B}^{\bullet}  \ar[r] & \omega_{\sX/B}^{\bullet}(\sH)  \ar[r]^-{+1} & .
} 
\end{equation}
On the other hand if $\iota : \sH \to \sX$ is the embedding morphism, then
\begin{equation*}
\omega_{\sH/B}^{\bullet} \cong \iota^! \omega_{\sX/B}^{\bullet} =  R \sHom_{\sH}(\sO_{\sH}, \iota^! \omega_{\sX/B}^{\bullet} )  \cong 
\underbrace{R \sHom_{\sX}(\sO_{\sH}, \omega_{\sX/B}^{\bullet} )}_{\textrm{by Grothendieck duality}} 
.
\end{equation*}
Now, applying $R \sHom_{\sX}( \_ , \omega_{\sX/B}^{\bullet})$ to the twist of \eqref{eq:hypersurface} by $\sO_{\sX}(-\sH)$ yields the exact triangle
\begin{equation}
\label{eq:ext_applied}
\xymatrix{
 \omega_{\sH/B}^{\bullet} \cong  R \sHom_{\sX}(\sO_{\sH}, \omega_{\sX/B}^{\bullet} ) \ar[r] & \omega_{\sX/B}^{\bullet} \ar[r] & \omega_{\sX/B}^{\bullet}(\sH) \ar[r]^-{+1} &
}
\end{equation}
Putting together \eqref{eq:after_tensoring} and \eqref{eq:ext_applied} finishes the proof of point \eqref{itm:restriction_Cartier_divisor:complex}.

To prove \eqref{itm:restriction_Cartier_divisor:sheaf}, take the natural map $\omega_{\sX/B}[n] \to \omega_{\sX/B}^{\bullet}$, twist it with $\sO_{\sX}(\sH)$ and then restrict  to $\sH$. This yields the commutative diagram

\begin{equation}
\label{eq:restriction_composition}
\xymatrix{
\omega_{\sX/B}[n-1](\sH)|^L_{\sH} \ar[r]  \ar@/^2pc/[rr] & \omega_{\sX/B}^{\bullet}(\sH)[-1]|^L_{\sH} \ar[r]_(0.7){\underbrace{\tiny  \cong}_{\textrm{ \tiny by point \eqref{itm:restriction_Cartier_divisor:complex}} }} & \omega_{\sH/B}^{\bullet}
}
\end{equation}
Applying then $h^{-(n-1)}(\_)$ to the long composition arrow of \eqref{eq:restriction_composition}, yields the homomorphism \eqref{eq:restriction_Cartier_divisor:sheaf}. 

Let $P$ be a point of $\sH$ which is relatively Cohen-Macaulay over $B$, and let $b$ be the image of $P$ in $B$. By the opennes of the relative Cohen-Macaulay locus, there is a  neighborhood $U$ of $P$, where $X \to B$ is relatively Cohen-Macaulay. In particular, then $\omega_{\sX/B}[n-1] \to \omega_{\sX/B}^{\bullet}[-1]$ is isomorphism over $U$ by Proposition \ref{prop:base_change_complex}.\ref{itm:base_change_complex:Cohen_Macaulay} and hence so is the first arrow of \eqref{eq:restriction_composition}. This proves that  \eqref{eq:restriction_Cartier_divisor:sheaf} is an isomorphism in a neighborhood of $P$, which finishes the proof of point \eqref{itm:restriction_Cartier_divisor:sheaf} as well.
\end{proof}

\begin{rem}
The homomorphisms constructed in Propositions \ref{prop:base_change_complex} and  \ref{prop:restriction_Cartier_divisor}, e.g., the isomorphisms \eqref{eq:base_change_complex:restriction_isomorphism} and \eqref{eq:restriction_Cartier_divisor:complex}, are not canonical in any sense. 
\end{rem}

\section{Serre's condition on projective cones}
\label{sec:S_d_cones}

In this section we consider sheaves on projective cones that are isomorphic to pullbacks from the base outside the vertex. Lemma \ref{lem:S_d_cones} gives a cohomological description of when such sheaves are  $S_d$. Before that we also need a short lemma, Lemma \ref{lem:S_d_pullback}, about how the property $S_d$ pulls  back in flat relatively Cohen-Macaulay families. 

We cite the following fact separately here, because it is used at many places throughout the article, including the aforementioned Lemma \ref{lem:S_d_pullback}.

\begin{fact}{\cite[Proposition 6.3.1]{GA_EGA_IV_II}}
\label{fact:S_d_pullback}
Let $X$ and $Y$ be two noetherian schemes, $f: X \to Y$ a flat morphism, $P \in X$ arbitrary and $\sF$ a coherent $Y$ module. In this situation,
\begin{equation*}
 \depth_{\sO_{X,P}} (f^* \sF)_P =  \depth_{\sO_{Y,f(P)}}\sF_{f(P)} + \depth_{\sO_{X_{f(P)},P}}  \sO_{X_{f(P)},P} .
\end{equation*}
\end{fact}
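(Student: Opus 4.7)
The statement is local in nature, so the plan is to reduce immediately to the case where $X = \Spec A$ and $Y = \Spec B$ with $(A, \mathfrak{m}_A)$ and $(B, \mathfrak{m}_B)$ noetherian local, $B \to A$ a flat local homomorphism, $M$ a finitely generated $B$-module, and $P$ the closed point of $\Spec A$. Writing $s = \depth_B M$ and $t = \depth_{A/\mathfrak{m}_B A} A/\mathfrak{m}_B A$, the goal is the identity $\depth_A(M \otimes_B A) = s + t$. I would prove it by induction on $s + t$, the main tool being a base-change lifting lemma: if $\bar a \in A/\mathfrak{m}_B A$ is a non-zero-divisor on $A/\mathfrak{m}_B A$, then any lift $a \in A$ is a non-zero-divisor on $M \otimes_B A$, and moreover $A/aA$ remains flat over $B$ (this last point is the local criterion of flatness applied to $0 \to A \xrightarrow{a} A \to A/aA \to 0$).

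The lifting lemma itself should be proved using the $\mathfrak{m}_B$-adic filtration. Since $M \otimes_B A$ is finitely generated over the noetherian local ring $A$, it is $\mathfrak{m}_A$-adically, hence $\mathfrak{m}_B$-adically, separated. By flatness, $\mathfrak{m}_B^n(M \otimes_B A)/\mathfrak{m}_B^{n+1}(M \otimes_B A) \cong (\mathfrak{m}_B^n M/\mathfrak{m}_B^{n+1}M) \otimes_{B/\mathfrak{m}_B} A/\mathfrak{m}_B A$ is a free $A/\mathfrak{m}_B A$-module, on which $\bar a$ acts as a non-zero-divisor. A standard ``lowest-nonzero-graded-piece'' argument applied to any hypothetical element killed by $a$ then produces a contradiction.

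For the inductive step, assuming $s + t > 0$, at least one of two cases applies. If $s > 0$, choose $b \in \mathfrak{m}_B$ regular on $M$; flatness of $A$ over $B$ makes it regular on $M \otimes_B A$, and after quotienting by $b$ the depth on the left drops by one while the right-hand side becomes $(s-1) + t$, applying the induction hypothesis to $M/bM$. If $t > 0$, I use the lifting lemma to choose $a \in \mathfrak{m}_A$ regular on $M \otimes_B A$ with $A/aA$ still flat over $B$; then $(M \otimes_B A)/a \cong M \otimes_B (A/aA)$, and applying the induction hypothesis to the new flat family $B \to A/aA$ (whose closed fiber is $A/\mathfrak{m}_B A$ modulo $\bar a$, of depth $t-1$) again concludes.

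The base case $s = t = 0$ is where the genuine content lies. Here $\mathfrak{m}_B$ is associated to $M$, giving an injection $B/\mathfrak{m}_B \hookrightarrow M$; flatness yields an injection $A/\mathfrak{m}_B A \hookrightarrow M \otimes_B A$. Choosing $\bar x \in A/\mathfrak{m}_B A$ whose annihilator is $\mathfrak{m}_A/\mathfrak{m}_B A$ (possible because $t = 0$), the injectivity of the above inclusion shows that the annihilator of the image of $\bar x$ in $M \otimes_B A$ is precisely $\mathfrak{m}_A$, so $\mathfrak{m}_A$ is associated to $M \otimes_B A$ and $\depth_A(M \otimes_B A) = 0$. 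The main obstacle is packaging the lifting lemma cleanly — once it is available, everything is a bookkeeping exercise in choosing regular elements and applying induction.
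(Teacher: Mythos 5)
Your statement is one the paper does not prove at all: it is quoted as a Fact with a citation to EGA IV, Proposition 6.3.1, so there is no in-paper argument to compare against. Your blind proof is correct, and it is in substance the standard proof of that result (the one in EGA IV 6.3.1 and in Matsumura, Theorem 23.3): reduce to a flat local homomorphism $B\to A$ with $M$ finite over $B$, and induct on $s+t$ by slicing alternately in the base direction (a regular element $b\in\mathfrak m_B$ on $M$, which stays regular on $M\otimes_BA$ by flatness) and in the fiber direction (a lift $a$ of a regular element of $A/\mathfrak m_BA$, with $A/aA$ still $B$-flat by the local criterion), the base case $s=t=0$ being handled by exhibiting $\mathfrak m_A$ as an associated prime of $M\otimes_BA$ via the injection $A/\mathfrak m_BA\hookrightarrow M\otimes_BA$. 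All the individual steps check out: the associated-graded argument for the lifting lemma is the usual proof of the slicing criterion and is valid as you state it. One small economy you could make: once you know $a$ is $A$-regular and $A/aA$ is $B$-flat, tensoring $0\to A\xrightarrow{\,a\,}A\to A/aA\to 0$ with $M$ over $B$ gives $\Tor_1^B(M,A/aA)=0$ and hence the regularity of $a$ on $M\otimes_BA$ for free, so the graded-piece argument only needs to be run for the module $A$ itself rather than for $M\otimes_BA$.
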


\begin{lem}
\label{lem:S_d_pullback}
If $\sG$ is a full dimensional coherent $S_d$ sheaf on the scheme $X$, and $f : \sX \to X$ is a flat, relatively Cohen-Macaulay family, then $\sF:= f^* \sG$ is $S_d$ as well.  
\end{lem}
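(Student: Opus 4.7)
The plan is to reduce the depth computation for $\sF$ directly to Fact \ref{fact:S_d_pullback}, combined with the Cohen-Macaulay hypothesis on the fibers of $f$ and the flatness of $f$ to compare dimensions.

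First I would fix an arbitrary $P\in\sX$ and set $Q:=f(P)$. By Fact \ref{fact:S_d_pullback} applied to the flat morphism $f$ and the coherent sheaf $\sG$,
\[
\depth\sF_P \;=\; \depth\sG_Q \;+\; \depth\sO_{\sX_Q,P}.
\]
The relative Cohen-Macaulay hypothesis on $f$ says that $\sO_{\sX_Q,P}$ is Cohen-Macaulay, so $\depth\sO_{\sX_Q,P}=\dim\sO_{\sX_Q,P}$. The hypothesis that $\sG$ is $S_d$ with full support gives $\depth\sG_Q\geq\min\{d,\dim\sO_{X,Q}\}$. Finally, flatness of $f$ gives the dimension formula $\dim\sO_{\sX,P}=\dim\sO_{X,Q}+\dim\sO_{\sX_Q,P}$.

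Substituting these into the displayed equation yields
\[
\depth\sF_P \;\geq\; \min\{d,\dim\sO_{X,Q}\} + \dim\sO_{\sX_Q,P}.
\]
Now I would split into two cases. If $\dim\sO_{X,Q}\leq d$, the right-hand side equals $\dim\sO_{X,Q}+\dim\sO_{\sX_Q,P}=\dim\sO_{\sX,P}$, which is at least $\min\{d,\dim\sO_{\sX,P}\}$. If $\dim\sO_{X,Q}>d$, the right-hand side equals $d+\dim\sO_{\sX_Q,P}\geq d\geq\min\{d,\dim\sO_{\sX,P}\}$. In either case the required inequality holds, so $\sF$ is $S_d$ on $\sX$.

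One small verification to record is that $\sF$ has full support on $\sX$, which is needed to invoke the paper's (stronger) $S_d$ definition. This holds because $\sG$ has full support on $X$ and $f$ is flat: for any $P\in\sX$, the stalk $(f^*\sG)_P\cong\sG_{f(P)}\otimes_{\sO_{X,f(P)}}\sO_{\sX,P}$ is nonzero by flatness of $\sO_{\sX,P}$ over $\sO_{X,f(P)}$ and nonvanishing of $\sG_{f(P)}$. The only place where care is needed in the write-up is the case split at the end; there is no serious obstacle, since Fact \ref{fact:S_d_pullback} does essentially all of the work.
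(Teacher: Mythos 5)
Your proof is correct and follows essentially the same route as the paper: apply Fact \ref{fact:S_d_pullback}, use the Cohen--Macaulay fibers to replace depth by dimension, invoke the $S_d$ hypothesis on $\sG$, and conclude with the dimension formula for flat morphisms. The paper compresses your final case split into the single inequality $\min\{d,a\}+c\geq\min\{d,a+c\}$ for $c\geq 0$, but the argument is the same.
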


\begin{proof}
For every $ x \in X$,
\begin{align*}
\depth \sF_x 
& =
\underbrace{\depth \sG_{f(x)} + \depth \sO_{\sX_{f(x)},x}}_{\textrm{Fact \ref{fact:S_d_pullback}}}
\\ & =
\underbrace{\depth \sG_{f(x)} + \dim \sO_{\sX_{f(x)},x}}_{\textrm{$\sX_{f(x)}$ is Cohen-Macaulay}} 
\\ &  \geq 
\underbrace{\min\{d, \dim \sO_{X,f(x)}\} + \dim \sO_{\sX_{f(x)},x}}_{\textrm{$\sG$ is $S_d$}} 
\\ &  \geq 
\min\{d, \dim \sO_{X,f(x)} + \dim \sO_{\sX_{f(x)},x}\} 
\\ &  = 
\underbrace{\min\{d, \dim \sO_{\sX,x} \}}_{\dim \sO_{X,f(x)} + \dim \sO_{\sX_{f(x)},x}= \dim \sO_{\sX,x}  \textrm{ by \cite[Theorem 15.1.ii]{MH_CRT}} } .
\end{align*}

\end{proof}

\begin{lem}
\label{lem:S_d_cones}
Assume that we are in the following situation:
\begin{itemize}
\item $Y$ is a projective scheme, 
\item $X$ is the projectivized cone over $Y$,
\item $P$ is the vertex of $X$ and $V:= X \setminus P$,
\item $d$ is an integer, such that $2 \leq d \leq \dim X$ and
\item $\sF$ is a coherent sheaf on $X$, such that $\sF|_V = \pi^* \sG$ for some $S_{d}$ coherent sheaf $\sG$ on $Y$, where $\pi : V \to Y$ is the natural projection.
\end{itemize}
Then the following conditions are equivalent:
\begin{enumerate}
\item $\depth \sF_P \geq d$
\item $\depth \sF_P \geq \min\{d, \dim \sO_{X,P} \}$
\item $\sF$ is $S_d$
\item \label{itm:third} $\sF$ is $S_2$  and $H^i(Y,\sG(n))=0$ for all $0 < i < d -1$ and $n \in \bZ$.
\end{enumerate}
\end{lem}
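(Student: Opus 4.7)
\emph{Plan.} The strategy is to reduce the equivalence of (1)--(3) to Lemma \ref{lem:S_d_pullback}, and then to reformulate (1) as a local cohomology condition at the vertex, which one then computes using the $\bG_m$-bundle structure of the punctured affine cone.

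\emph{Step 1: $(1) \Leftrightarrow (2) \Leftrightarrow (3)$.} Since $P$ is a closed point of $X$ with $\dim \sO_{X,P} = \dim X \geq d$, the minimum in (2) equals $d$, which immediately gives (1) $\Leftrightarrow$ (2). Next, $\pi \colon V \to Y$ is the projection away from the vertex and is easily checked to be an $\bA^1$-bundle, in particular smooth and hence relatively Cohen--Macaulay. So Lemma \ref{lem:S_d_pullback} applied to $\sF|_V = \pi^*\sG$ shows that $\sF$ is automatically $S_d$ on $V$; thus (3) is equivalent to the depth condition at the single point $P$, which is precisely (2).

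\emph{Step 2: $(1) \Leftrightarrow (4)$ via local cohomology.} Let $\tilde X \subset X$ be the affine cone (the complement of the hyperplane at infinity), set $U := \tilde X \setminus P$, and let $\pi_U \colon U \to Y$ be the restriction of $\pi$, which is the $\bG_m$-bundle associated to the cone. Since depth at $P$ is a local invariant and $\tilde X$ is affine, the standard local cohomology exact sequence reads
\[
0 \to H^0_P(\sF) \to \Gamma(\tilde X, \sF) \to \Gamma(U, \sF|_U) \to H^1_P(\sF) \to 0,\quad H^{i-1}(U, \sF|_U) \cong H^i_P(\sF)\ (i \geq 2).
\]
Thus $\depth \sF_P \geq d$, i.e.\ (1), is equivalent to the combination of (a) $H^0_P = H^1_P = 0$ (equivalently, $\depth \sF_P \geq 2$, which by Step 1 is the same as ``$\sF$ is $S_2$'') and (b) $H^j(U, \sF|_U) = 0$ for $1 \leq j \leq d - 2$.

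\emph{Step 3: Cohomology on the $\bG_m$-bundle.} The morphism $\pi_U$ is affine, so $R^i \pi_{U*} \sO_U = 0$ for $i > 0$ and $\pi_{U*} \sO_U = \bigoplus_{n \in \bZ} \sO_Y(n)$ as a $\bZ$-graded sheaf of algebras. Combining $\sF|_U = \pi_U^*\sG$ with the projection formula gives
\[
H^j(U, \sF|_U) = \bigoplus_{n \in \bZ} H^j(Y, \sG(n)),
\]
so (b) translates to $H^j(Y, \sG(n)) = 0$ for $0 < j < d - 1$ and all $n \in \bZ$; together with (a) this is precisely (4). The only mildly subtle point, and the main obstacle, is that the $\bZ$-graded pushforward $\pi_{U*} \sO_U$ involves all twists $n \in \bZ$ (rather than only $n \geq 0$, as one would get from the structure sheaf of the affine cone itself); this is what produces the ``all $n \in \bZ$'' clause in (4). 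Everything else is a routine application of the local cohomology exact sequence and the pullback lemma.
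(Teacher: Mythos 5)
Your proof is correct and takes essentially the same route as the paper's: reduce the equivalence of (1)--(3) to the depth condition at the vertex via Lemma \ref{lem:S_d_pullback}, then translate that condition into local cohomology on the affine cone and compute $H^j(U,\sF|_U)\cong\bigoplus_{n\in\bZ}H^j(Y,\sG(n))$ using that the punctured affine cone is affine over $Y$ with pushforward $\bigoplus_{n\in\bZ}\sO_Y(n)$. No gaps.
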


\begin{proof}
Since $\sG$ is $S_{d}$, $\sF$ is $S_d$ everywhere except at the vertex $P$ by Lemma \ref{lem:S_d_pullback}. Hence, using the assumption $d \leq \dim X$,
\begin{equation*}
\begin{array}{c}
\textrm{$\sF$ is $S_d$,} \\
\Updownarrow \\
\depth \sF_P \geq \min\{d, \dim \sO_{X,P} \}  \\
\Updownarrow \\
\depth \sF_P \geq d \\
\Updownarrow \\
\textrm{$H_P^i(Z,\sF)=0$ for all $i<d$ and for the affine cone $Z$,}
\end{array}
\end{equation*}
where the latter equivalence follows from \cite[Exercise III.3.4.b and Exercise III.2.5]{HR_AG}. So, we are left to show that the condition $H_P^i(Z,\sF)=0$ for all $i<d$ is equivalent to point \eqref{itm:third}. Define $U:= Z \setminus P$. Then there is a long exact sequence:
\begin{equation*}
\xymatrix{
\dots \ar[r] & H_P^i(Z,\sF) \ar[r] & H^i(Z,\sF) \ar[r] & H^i(U,\sF) \ar[r] & \dots 
} .
\end{equation*}
Since $Z$ is affine $H^i(Z,\sF)=0$ for all $i>0$. Hence  
\begin{equation}
\label{eq:isom}
H^i(U,\sF) \cong H_P^{i+1}(Z,\sF) \textrm{  for all } i>0 .
\end{equation}
So, since  $H_P^0(Z,\sF)=H_P^1(Z,\sF)=0$ is assumed  in point \eqref{itm:third}, it is enough to show that for all $0<i<d-1$,
\begin{equation}
\label{eq:claimed}
H^i(U,\sF) \cong \bigoplus_{n \in \bZ} H^i(Y,\sG(n)) .
\end{equation}
In fact we will prove this for all $i$. First, notice that $U \cong \Spec_Y(\bigoplus_{n \in \bZ} \sO_Y(n))$ and the natural projection  $\Spec_Y(\bigoplus_{n \in \bZ} \sO_Y(n)) \to Y$ can be identified with  $\pi|_U$  via this isomorphism. Hence $(\pi|_U)_* \sO_U \cong \bigoplus_{n \in \bZ} \sO_Y(n)$ and $R^i (\pi|_U)_* \sO_U=0$ for $i>0$. So:
\begin{multline*}
H^i(U,\sF) \cong H^i(Y,(\pi|_U)_* \sF|_U) \cong H^i(Y,(\pi|_U)_* (\pi|_U)^* \sG) \cong \\ \cong H^i(Y, \bigoplus_{n \in \bZ} \sG(n)) \cong \bigoplus_{n \in \bZ} H^i(Y,  \sG(n)) 
\end{multline*}
as claimed in \eqref{eq:claimed}.
\end{proof}

\section{Construction of varieties with prescribed singularities}
\label{sec:construction_of_varieties}

In this section, normal $S_{j}$ (but not $S_{j+1}$) varieties of dimension $n \geq 3$  with  $S_l$ (but not $S_{l+1}$), $\bQ$-line bundle canonical sheaves are constructed for certain values of $j$ and $l$. They are going to be used in Section \ref{sec:construction_of_families} and in Section \ref{sec:Cohen-Macaulay} to build families with prescribed base change behavior for the relative canonical sheaves. First we need some lemmas.
 
\begin{lem}
\label{lem:hypersurface}
If  $H$ is a general, high enough degree hypersurface in a projective variety $X$, then $H^i(H,\sO_H) \cong H^i(X,\sO_X)$ for every $0 < i < \dim H$.
\end{lem}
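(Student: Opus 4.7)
The plan is to reduce the claim to a vanishing statement on $X$ via the standard structure sequence and then apply Grothendieck--Serre duality together with Serre vanishing. For the first step, since $H$ is general of sufficiently high degree, its defining section $s \in H^0(X,\sO_X(d))$ avoids the finitely many associated points of $\sO_X$ (of which there is only one, the generic point, since $X$ is integral), so $\sI_{H,X}$ is a line bundle and we have a short exact sequence
\[
0 \longrightarrow \sO_X(-d) \longrightarrow \sO_X \longrightarrow \sO_H \longrightarrow 0.
\]
The induced long exact sequence in cohomology shows that $H^i(X,\sO_X) \to H^i(H,\sO_H)$ is an isomorphism for $0 < i < n-1 = \dim H$ as soon as
\[
H^j(X, \sO_X(-d)) = 0 \qquad \text{for } 1 \le j \le n-1,
\]
where $n = \dim X$.

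To establish this vanishing for $d \gg 0$, I would apply Grothendieck--Serre duality to rewrite
\[
H^j(X, \sO_X(-d))^\vee \;\cong\; \mathbb{H}^{-j}\bigl(X, \omega_X^\bullet(d)\bigr),
\]
and then run the hypercohomology spectral sequence
\[
E_2^{p,q} \;=\; H^p\bigl(X, h^q(\omega_X^\bullet)(d)\bigr) \;\Longrightarrow\; \mathbb{H}^{p+q}\bigl(X, \omega_X^\bullet(d)\bigr).
\]
Since $\omega_X^\bullet$ has only finitely many nonzero cohomology sheaves, Serre vanishing applied to each of them kills the rows $p > 0$ once $d$ is large enough, so the spectral sequence degenerates to
\[
\mathbb{H}^{-j}\bigl(X, \omega_X^\bullet(d)\bigr) \;\cong\; H^0\bigl(X, h^{-j}(\omega_X^\bullet)(d)\bigr).
\]
In the Cohen--Macaulay case $\omega_X^\bullet$ is concentrated in cohomological degree $-n$, so this group is zero for $1 \le j \le n-1$ and the lemma follows immediately.

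In general, $h^{-j}(\omega_X^\bullet)$ need not vanish for $0 < j < n$. However, Proposition~\ref{prop:base_change_complex}.\ref{itm:base_change_complex:Sd_at_closed_point} applied at the generic point $\eta$ (where $\sO_{X,\eta}$ is a field, so has depth $0$ and $\dim_X\eta = n$) shows that $h^{-j}(\omega_X^\bullet)_\eta = 0$ for $j < n$; hence $h^{-j}(\omega_X^\bullet)$ is a coherent sheaf supported on a proper closed subscheme of $X$ for $0 < j < n$. The main obstacle is then to exploit the generality (not just the large degree) of $s$ to conclude that the map in the long exact sequence involving $H^j(X,\sO_X(-d))$ vanishes, equivalently that $\cdot s$ is injective on the dual groups $\mathbb{H}^{-j}(X,\omega_X^\bullet)$. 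The plan here is to choose $s$ of sufficiently high degree so as to avoid the finitely many associated points of each of the torsion sheaves $h^{-j}(\omega_X^\bullet)$, thereby making $\cdot s : h^{-j}(\omega_X^\bullet) \to h^{-j}(\omega_X^\bullet)(d)$ injective and chasing this injectivity through the relevant exact sequences back to the desired isomorphism.
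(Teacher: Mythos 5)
Your reduction is exactly the one in the paper: the ideal--sheaf sequence $0 \to \sO_X(-H) \to \sO_X \to \sO_H \to 0$ together with the vanishing $H^j(X,\sO_X(-H))=0$ for $1\le j\le \dim X-1$. The paper simply asserts this vanishing for $\deg H\gg 0$; you justify it, via duality and the hypercohomology spectral sequence, precisely when $X$ is Cohen--Macaulay. That part of your argument is complete and is all that is ever used: the lemma enters the paper only through Lemma~\ref{lem:complete_intersection}, whose ambient varieties are smooth (and whose intermediate hypersurface sections are again smooth by Bertini), so the Cohen--Macaulay case covers every application.

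Your unease about the general case is well founded, but the final paragraph neither closes the gap nor can it. First, there is a duality slip: the map $\cdot s\colon H^j(X,\sO_X(-d))\to H^j(X,\sO_X)$ is zero if and only if its dual $\cdot s\colon \mathbb{H}^{-j}(X,\omega_X^{\bullet})\to\mathbb{H}^{-j}(X,\omega_X^{\bullet}(d))$ is zero, not injective; injectivity of the dual map is equivalent to surjectivity of the original one, which would force $H^j(X,\sO_X)\to H^j(H,\sO_H)$ to be the \emph{zero} map. Second, no choice of $s$ can rescue the statement for arbitrary projective varieties, because the lemma is false without a Cohen--Macaulay--type hypothesis. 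Take $X$ to be the projective cone over a projectively normally embedded abelian surface $A$: a Mayer--Vietoris computation on the cone gives $H^1(X,\sO_X)=0$, while a general hypersurface $H$ of degree $d\gg 0$ misses the vertex and is a smooth connected variety finite over $A$, so the trace splitting of $\sO_A\to\pi_*\sO_H$ gives $H^1(H,\sO_H)\supseteq H^1(A,\sO_A)\ne 0$. So the correct conclusion of your analysis is that the hypothesis ``projective variety'' must be strengthened to ``Cohen--Macaulay (e.g.\ smooth) projective variety,'' after which your first argument is a complete proof; the paper's own three-line proof tacitly makes the same assumption.
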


\begin{proof}
We start with the usual exact sequence 
\begin{equation}
\label{eq:egzact}
\xymatrix{
0 \ar[r] & \sO_{X}(-H) \ar[r] & \sO_{X} \ar[r] & \sO_{H} \ar[r] & 0
} . 
\end{equation}
Since $\deg H \gg 0$,
\begin{equation}
\label{eq:Serre_vanishing}
 H^i(X,\sO_{X}(-H))=0 \textrm{ whenever } i< \dim X .
\end{equation}
Taking the cohomology long exact sequence of \eqref{eq:egzact} and using \eqref{eq:Serre_vanishing} finishes the proof.
\end{proof}

Iterated use of Lemma \ref{lem:hypersurface} yields the following 

\begin{lem}
\label{lem:complete_intersection}
If  $H$ is a general, high enough degree complete intersection (i.e. it is the intersection of hypersurfaces, all of which are high enough degree)  in a smooth projective variety $X$, then $H^i(H,\sO_H) \cong H^i(X,\sO_X)$ for every $0 < i < \dim H$.
\end{lem}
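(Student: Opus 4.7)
The plan is to iterate Lemma \ref{lem:hypersurface} by cutting $X$ down to $H$ one hypersurface at a time. I would write $H = H_1 \cap H_2 \cap \cdots \cap H_r$ with each $H_i$ a hypersurface in $X$ of some high degree $d_i$, and set $X_0 := X$ and $X_j := H_1 \cap \cdots \cap H_j$, so that $X_r = H$. The degrees $d_1, \ldots, d_r$ and the hypersurfaces would be chosen successively: at step $j$, once $X_{j-1}$ has been verified to be a smooth projective variety, pick $d_j$ large enough so that $\sO_{X_{j-1}}(d_j)$ is very ample and so that the Serre vanishing condition underlying Lemma \ref{lem:hypersurface} holds on $X_{j-1}$.

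Assuming $\dim H \geq 1$ (otherwise the conclusion is vacuous), every intermediate $X_{j-1}$ with $1 \leq j \leq r$ has dimension at least two. Applying Bertini's theorem in characteristic zero to the very ample linear system $|\sO_{X_{j-1}}(d_j)|$ on the smooth projective variety $X_{j-1}$ of dimension $\geq 2$, a general member $H_j$ cuts out a smooth, connected, hence integral, closed subscheme $X_j \subseteq X_{j-1}$. By induction on $j$, each $X_j$ is a smooth projective variety, and so Lemma \ref{lem:hypersurface} applies at every step to give
\[
H^i(X_j, \sO_{X_j}) \;\cong\; H^i(X_{j-1}, \sO_{X_{j-1}}) \qquad \text{for all } 0 < i < \dim X_j .
\]
Since $\dim X_j \geq \dim H$ throughout, chaining these isomorphisms for $j = 1, \ldots, r$ yields $H^i(H, \sO_H) \cong H^i(X, \sO_X)$ for $0 < i < \dim H$, which is the desired conclusion.

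I do not expect a genuine obstacle here; the only mild care is to organize the choice of degrees so that the ``high enough'' hypothesis of Lemma \ref{lem:hypersurface} is met at every stage. Because the required lower bound on $d_j$ depends only on the already-constructed $X_{j-1}$, a finite inductive selection suffices. The smoothness hypothesis on $X$ is used precisely to ensure that each Bertini step preserves smoothness and integrality of the successive sections, which keeps Lemma \ref{lem:hypersurface} applicable throughout the iteration.
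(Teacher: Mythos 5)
Your argument is correct and is exactly the route the paper takes: the paper's proof of Lemma \ref{lem:complete_intersection} is literally ``iterated use of Lemma \ref{lem:hypersurface},'' and you have supplied the standard details (Bertini in characteristic zero to keep each intermediate section a smooth projective variety, plus the dimension bookkeeping $\dim X_j \geq \dim H$ needed to chain the isomorphisms). Nothing further is required.
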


Finally, iterated use of the adjunction formula yields the following.

\begin{lem}
\label{lem:complete_intersection_adjunction}
If  $H$ is a  complete intersection in a smooth projective variety $X$, then $\omega_H \cong \omega_X (m)|_H$ for some $m >0$ (here $\sO_X(1)$ is the very ample line bundle given by the projective embedding of $X$).
\end{lem}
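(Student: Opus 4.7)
The plan is to use induction on the number $r$ of hypersurfaces cutting out the complete intersection $H \subseteq X$. Write $H = H_1 \cap H_2 \cap \cdots \cap H_r$, where each $H_i$ is a hypersurface of some degree $d_i > 0$ in $X$, i.e., $H_i$ is defined by a section of $\sO_X(d_i)$. Since $X$ is smooth, $H$ is a local complete intersection, and each intermediate scheme $H_1 \cap \cdots \cap H_i$ is a Cartier divisor inside $H_1 \cap \cdots \cap H_{i-1}$, all of which are Gorenstein (in fact locally complete intersections in the smooth $X$). Hence adjunction applies at every stage.

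The base case $r=1$ is the standard adjunction formula for a smooth ambient variety: $\omega_{H_1} \cong \omega_X(H_1)|_{H_1} \cong \omega_X(d_1)|_{H_1}$. For the inductive step, assume that for the complete intersection $K := H_1 \cap \cdots \cap H_{r-1}$ one has $\omega_K \cong \omega_X(d_1 + \cdots + d_{r-1})|_K$. Since $H = K \cap H_r$ is cut out in $K$ by the restriction of the defining section of $H_r$, and this restriction is a regular section (by the complete intersection hypothesis), adjunction on $K$ gives
\begin{equation*}
\omega_H \cong \omega_K(H_r \cap K)|_H \cong \omega_K(d_r)|_H \cong \omega_X(d_1 + \cdots + d_r)|_H.
\end{equation*}
Setting $m := d_1 + \cdots + d_r$ gives the desired conclusion, and clearly $m > 0$ since each $d_i > 0$.

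There is no real obstacle; the only minor care needed is to verify at each inductive step that the relevant section does not vanish on any component and that the intermediate intersections are Cartier, so that the adjunction formula (in the form $\omega_D \cong \omega_Y(D)|_D$ for $D$ an effective Cartier divisor on a Gorenstein scheme $Y$) is applicable. Both facts follow from the complete intersection assumption together with the smoothness of $X$.
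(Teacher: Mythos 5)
Your proposal is correct and is exactly the paper's argument: the paper proves this lemma by "iterated use of the adjunction formula," which is precisely the induction on the defining hypersurfaces that you spell out. Your extra care about the intermediate intersections being Cartier and Gorenstein is a sound (if routine) elaboration of the same idea.
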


\begin{prop}
\label{prop:existence}
For each $n \geq 2$ and $2 \leq d,l \leq n$ such that $l \leq d$ and $d + l \leq n+2$    there is an $n+1$-dimensional projective variety $X_{n+1}$ for which:
\begin{itemize}
\item $X_{n+1}$ is the projective cone over a smooth projective variety $Y_{n}$ with vertex $P$,
\item $X_{n+1}$ is $S_{d}$ and $\depth \sO_{X_{n+1},P} = d$,
\item $\omega_{X_{n+1}}$ is $S_l$ and $\depth \omega_{X_{n+1},P} = l$,
\item $\omega_{X_{n+1}}$ is a $\bQ$-line bundle.
\end{itemize}
\end{prop}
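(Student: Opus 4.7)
The plan is to translate the four properties required of the cone $X_{n+1}$ into cohomological conditions on the smooth base $Y_n$ via Lemma \ref{lem:S_d_cones}, and then to exhibit $Y_n$ by an explicit construction.

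For the projective cone $X_{n+1}$ over a smooth $n$-fold $Y$ with very ample polarization $\sO_Y(1)$, the complement $V := X_{n+1} \setminus \{P\}$ is naturally an affine-line bundle $\pi \colon V \to Y$, and a standard computation on the resolution $\bP_Y(\sO_Y \oplus \sO_Y(-1)) \to X_{n+1}$ identifies $\sO_{X_{n+1}}|_V \cong \pi^* \sO_Y$ together with $\omega_{X_{n+1}}|_V \cong \pi^*(\omega_Y \otimes \sO_Y(-1))$. Since $\mathrm{Cl}(X_{n+1}) \cong \Pic(Y)/\langle \sO_Y(1) \rangle$, the sheaf $\omega_{X_{n+1}}$ is a $\bQ$-line bundle if and only if $\omega_Y$ equals $c \cdot \sO_Y(1)$ in $\Pic(Y) \otimes \bQ$ for some $c \in \bQ$. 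Applying Lemma \ref{lem:S_d_cones} once with $\sF = \sO_{X_{n+1}}$ and once with $\sF = \omega_{X_{n+1}}$, and then using Serre duality on $Y$ to turn the conditions on $\omega_Y(j)$ into complementary-degree conditions on $\sO_Y(-j)$, the proposition becomes equivalent to producing a smooth projective $n$-fold $Y$ satisfying
\begin{itemize}
\item $H^i(Y, \sO_Y(j)) = 0$ for every $j \in \bZ$ and $i \in \{1, \dots, d-2\} \cup \{n-l+2, \dots, n-1\}$,
\item $H^{d-1}(Y, \sO_Y(j_1)) \neq 0$ and $H^{n-l+1}(Y, \sO_Y(j_2)) \neq 0$ for some $j_1, j_2 \in \bZ$, and
\item $\omega_Y \equiv c \cdot \sO_Y(1)$ in $\Pic(Y) \otimes \bQ$ for some $c \in \bQ$.
\end{itemize}

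Next, I would build $Y$ as a sufficiently generic and sufficiently ample smooth complete intersection inside a product of three well-understood building blocks: an abelian variety (which, under any polarization, contributes intermediate cohomology of $\sO$ in every degree between $1$ and its own dimension, concentrated at the trivial twist), a K3 surface or an irreducible holomorphic symplectic manifold (contributing isolated intermediate cohomology at even degrees $2, 4, \dots$ up to its dimension), and a projective space (filling out dimension without adding any intermediate cohomology of the polarization). A K\"unneth decomposition on the ambient, combined with Kodaira--Nakano vanishing, pins down exactly which intermediate cohomologies of $\sO_Z(j)$ on the ambient $Z$ are nonzero, and a Lefschetz hyperplane theorem for line bundles transfers this cohomological profile to $Y$ provided the cutting hypersurfaces are taken of sufficiently high bidegree. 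The bidegrees themselves are calibrated via the adjunction formula so that $\omega_Y$ ends up a rational multiple of the polarization $\sO_Y(1)$.

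The main obstacle is the simultaneous control of all the cohomology conditions for \emph{every} twist $j \in \bZ$, together with the Picard-theoretic condition on $\omega_Y$, uniformly across the admissible triples $(n, d, l)$. The boundary case $(d, l) = (2, 2)$ is handled immediately by taking $Y$ to be an abelian $n$-fold with $\omega_Y = \sO_Y$; the opposite boundary $d + l = n + 2$ collapses the two required non-vanishing degrees into one and is also tractable. The intermediate regime requires choosing the abelian dimension, the K3 or hyperk\"ahler factor, and the hypersurface bidegrees in a consistent manner so that the relevant Lefschetz vanishing applies for all twists of $\sO_Y(1)$ simultaneously, and this calibration is the technical heart of the construction.
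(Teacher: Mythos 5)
Your first half is correct and is essentially the paper's reduction: you translate the four required properties of the cone into cohomological conditions on $Y$ via Lemma \ref{lem:S_d_cones}, use Serre duality to convert the conditions on twists of $\omega_Y$ into complementary-degree conditions on twists of $\sO_Y$, and arrive at the right target profile, namely $H^i(Y,\sO_Y(j))=0$ for all $j\in\bZ$ and $i\in\{1,\dots,d-2\}\cup\{n-l+2,\dots,n-1\}$, non-vanishing somewhere in degrees $d-1$ and $n-l+1$, and $\omega_Y\equiv c\,\sO_Y(1)$ in $\Pic(Y)\otimes\bQ$.

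The gap is in the construction of $Y$: your building blocks cannot realize this profile for general admissible $(d,l)$. An abelian variety $A$ has $H^i(A,\sO_A)\neq 0$ for \emph{every} $0\leq i\leq \dim A$, and a hyperk\"ahler manifold of dimension $2k$ has $H^{2i}(\sO)\neq 0$ for all $0\leq i\leq k$; by K\"unneth, the first non-vanishing intermediate degree of $H^{\bullet}(\sO)$ on any product of your blocks is therefore $1$ (if an abelian factor is present) or $2$ (if a K3 or hyperk\"ahler factor is present), never $d-1$ once $d\geq 4$. The Lefschetz-type Lemma \ref{lem:complete_intersection} \emph{preserves} $H^i(\sO)$ for $0<i<\dim H$, so cutting down to a complete intersection cannot kill the unwanted classes in degrees $1,\dots,d-2$ (nor can you push them above the middle dimension, since they reappear on the dual side via Serre duality). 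The paper avoids this by taking the ambient to be a product $Z\times W$ of two \emph{strict Calabi--Yau} hypersurfaces of dimensions $d-1$ and $n+1-l$: each has no intermediate cohomology of $\sO$, so K\"unneth gives $H^q(\sO_{Z\times W})\neq 0$ exactly for $q\in\{0,\,d-1,\,n+1-l,\,n+d-l\}$, and a general high-degree complete intersection $Y$ of codimension $d-l$ then carries exactly the required profile (the hypothesis $d+l\leq n+2$ is what keeps $d-1\leq n+1-l$, so the two prescribed non-vanishing degrees do not collide with the prescribed vanishing ranges); adjunction makes $\omega_Y$ an integral multiple of the polarization because $\omega_{Z\times W}\cong\sO_{Z\times W}$, and $p\gg 0$ handles all non-trivial twists. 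Finally, you explicitly defer the ``calibration'' that you yourself identify as the technical heart, so even granting your choice of blocks the argument is not complete.
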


\begin{proof}
Take first two Calabi-Yau hypersurfaces $Z$ and $W$ of dimension $d-1$ and $n+1-l$, respectively.  Let $Y:=Y_{n}$ be a general high enough degree complete intersection of codimension $d-l$ in $Z \times W$. Notice, that $d-l \geq 0$ by assumption. Finally, let $X_{n+1}$ be the projective cone over $Y$ polarized by $\sO_Y(1) := \sO_{Z \times W}(p)|_Y$ for some $p \gg 0$ (after fixing $ Y$). Here $\sO_{Z \times W}(1)$ is the very ample line bundle on $Z \times W$ coming from its projective embedding.

The K\"unneth isomorphism yields the following.
\begin{equation*}
H^q(Z \times W, \sO_{Z \times W}) \cong \bigoplus_{r=0}^q H^r(Z,\sO_Z) \otimes H^{q-r}(W, \sO_{W}) .
\end{equation*}
Since $Z$ and $W$ are Calabi-Yau hypersurfaces of dimension $d-1$ and $n+1-l$, respectively, the following holds for their cohomology table.
\begin{equation*}
 H^q(Z, \sO_{Z }) \neq 0 \Leftrightarrow q=0 \textrm{ or } d-1 
 \end{equation*}
 \begin{equation*}
 H^s(W, \sO_{W}) \neq 0 \Leftrightarrow s=0 \textrm{ or } n+1-l.
\end{equation*}
Hence 
\begin{equation*}
 H^q(Z \times E, \sO_{Z \times E}) \neq 0 \Leftrightarrow q=0, d-1, n+1-l \textrm{ or } n-l +d   .
\end{equation*}
Using Lemma \ref{lem:complete_intersection} yields
\begin{equation}
\label{eq:coho_table}
\textrm{ for } 0<q<n : \qquad H^q(Y, \sO_Y) \neq 0 \Leftrightarrow q= d-1 \textrm{ or } n +1-l .
\end{equation}
Since $p \gg 0$, also:
\begin{equation*}
H^q(Y, \sO_{Y}(r)) =0  \textrm{ for every } r \textrm{ and }  0 < q < n .
\end{equation*}
Then by Lemma \ref{lem:S_d_cones} using that $d-1 \leq n+1-l$ by assumption, $X_{n+1}$ is $S_{d}$ and $\depth \sO_{X_{n+1},P}=d$ ($X_{n+1}$ is $S_2$ at the vertex, because $p \gg 0$ and hence $Y$ is projectively normal).

Serre duality implies that 
\begin{equation*}
H^q(Y, \omega_Y) \cong (H^{n-q}(Y, \sO_Y) )^* .
\end{equation*}
So, by \eqref{eq:coho_table},
\begin{equation*}
 \textrm{ for } 0<q<n : \qquad H^q(Y, \omega_Y) \neq 0  \Leftrightarrow q = l-1 \textrm{ or } n+1-d .
\end{equation*}
Since $X_{n+1}$ is an affine bundle over $Y$, $\omega_{X_{n+1}}$ is isomorphic to the pullback of $\omega_Y$ outside of the vertex. Then by Lemma \ref{lem:S_d_cones} using that $l-1 \leq n+1-d$, $\omega_{X_{n+1}}$ is $S_l$ and $\depth \omega_{X_{n+1},P}= l$ ($\omega_{X_{n+1}}$ is always $S_2$ by Proposition \ref{prop:base_change_complex}.\ref{itm:base_change_complex:omega_S2}). 

We are left to show, that $\omega_{X_{n+1}}$ is $\bQ$-Cartier. By Lemma \ref{lem:complete_intersection_adjunction},
\begin{equation*}
\omega_Y^{\otimes p} \cong (\omega_{Z \times E}(m)|_Y)^{\otimes p} \cong( \sO_{Z \times E}(m)|_Y)^{\otimes p} \cong \sO_Y(m).
\end{equation*}
That is, $\omega_Y^{\otimes p}$ is an integer multiple of the polarization of $Y$ used at the construction of  $X_{n+1}$. Hence, \cite[Exercise 3.5]{HCD_KSJ_RA} concludes the proof.
\end{proof}

\section{Construction of families without the base change property}
\label{sec:construction_of_families}

In this section we present the proof of Theorem \ref{thm:main1}. The following lemma contains the key argument of Theorem \ref{thm:main1}. It is also used in the proofs of Proposition \ref{prop:main} and Theorem \ref{thm:main3}.

\begin{lem}
\label{lem:restriction_map_pencil}
Let $f : \sH \to B = \bP^1$ be a flat pencil of hypersurfaces of a quasi-projective, equidimensional scheme $X$, such that $\sI_{\sH , X \times B}$ is a line bundle and $\sH$ and the closed fibers of $f$ are $S_2$ and $G_1$. Then 
\begin{enumerate}                                                                   \item \label{itm:restriction_map_pencil:omega_X_S3} if $\omega_X$ is $S_3$, the restriction map $\omega_{\sH/B}|_{\sH_0} \to \omega_{\sH_0}$ is an isomorphism, 
\item \label{itm:restriction_map_pencil:omega_X_not_S3} if $\depth \omega_{X,P} \not\geq \min\{3, \dim \sO_{X,P}\}$  for some $P \in X$, such that $P \in \sH_0$, but $P \not\in \sH_{\infty}$, then  $\omega_{\sH/B}|_{\sH_0} \not\cong \omega_{\sH_0}$.
\end{enumerate}
\end{lem}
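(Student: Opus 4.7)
Both parts hinge on Proposition~\ref{prop:base_change_complex}.\ref{itm:base_change_complex:S2}: since $\sH_0$ is $S_2$ and $G_1$, the restriction map $\omega_{\sH/B}|_{\sH_0}\to\omega_{\sH_0}$ is an isomorphism iff $\depth\omega_{\sH/B,P}\geq\min\{3,\dim\sO_{\sH,P}\}$ for every $P\in\sH_0$, and if this depth bound fails at even a single point then no (even non-canonical) isomorphism $\omega_{\sH/B}|_{\sH_0}\cong\omega_{\sH_0}$ exists.  So the whole proof reduces to a depth calculation on $\omega_{\sH/B}$ along $\sH_0$.

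The bridge to the hypotheses on $\omega_X$ comes from Proposition~\ref{prop:restriction_Cartier_divisor}.\ref{itm:restriction_Cartier_divisor:sheaf}, applied to the Cartier divisor $\sH\subset X\times B$: combined with flat base change for the projection $\pi_1:X\times B\to X$ (identifying $\omega_{(X\times B)/B}\cong\pi_1^*\omega_X$), it produces a natural morphism
\begin{equation*}
\phi:\pi_1^*\omega_X(\sH)|_\sH\longrightarrow\omega_{\sH/B},
\end{equation*}
which is an isomorphism over the relative Cohen--Macaulay locus of $f$.  Since each closed fiber $\sH_b$ is $G_1$, its non-CM locus has codimension $\geq 2$ in $\sH_b$, and a dimension count across the one-parameter family shows that the non-relatively-CM locus has codimension $\geq 2$ in $\sH$ itself.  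Fact~\ref{fact:S_d_pullback} (the smooth projection $\pi_1$ raises depth by one) and Fact~\ref{fact:S_d_hyperplane} (Cartier restriction lowers depth by one), together with Proposition~\ref{prop:base_change_complex}.\ref{itm:base_change_complex:omega_S2}, show that $\pi_1^*\omega_X(\sH)|_\sH$ is $S_2$; and $\omega_{\sH/B}$ is $S_2$ by Proposition~\ref{prop:base_change_complex}.\ref{itm:base_change_complex:relative_omega_S2}.  Thus $\phi$ is a morphism between two $S_2$ sheaves on the $S_2,G_1$ scheme $\sH$ which is an isomorphism in codimension $1$; the same Hartshorne-type $S_2$-extension argument used in Proposition~\ref{prop:base_change_complex}.\ref{itm:base_change_complex:S2} then forces $\phi$ to be a global isomorphism.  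For part (1), the hypothesis that $\omega_X$ is $S_3$ propagates through Facts~\ref{fact:S_d_pullback} and~\ref{fact:S_d_hyperplane} to show that $\omega_{\sH/B}\cong\pi_1^*\omega_X(\sH)|_\sH$ is $S_3$ as well, which gives the required depth bound at every $P\in\sH_0$.

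For part (2) only the local picture at the single point $P$ is needed.  Since $f_\infty(\pi_1(P))\neq 0$, I trivialize $\sO_X(d)$ by $f_\infty$ near $\pi_1(P)$ and work in the affine chart $\{t_0\neq 0\}\subset\bP^1$ with local coordinate $t=t_1/t_0$ (so that $0\in B$ corresponds to $t=0$); the defining equation $f_0 t_0+f_\infty t_1$ becomes $t=-f_0/f_\infty$, realizing $\sH$ locally as the graph of $-f_0/f_\infty$ and making $\pi_1:\sH\to X$ a local isomorphism near $P$ onto an open subset of $X$.  Proposition~\ref{prop:base_change_complex}.\ref{itm:base_change_complex:relative_over_smooth} applied on this affine chart of $B$ (where $\omega_B\cong\sO_B$) shows that $\omega_{\sH/B}$ locally agrees with $\omega_\sH$, which via the local isomorphism agrees with $\omega_X$ near $\pi_1(P)$.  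Hence $\depth\omega_{\sH/B,P}=\depth\omega_{X,\pi_1(P)}$ and $\dim\sO_{\sH,P}=\dim\sO_{X,\pi_1(P)}$; the hypothesis transfers directly, and Proposition~\ref{prop:base_change_complex}.\ref{itm:base_change_complex:S2} delivers $\omega_{\sH/B}|_{\sH_0}\not\cong\omega_{\sH_0}$.

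The main obstacle is establishing the global isomorphism $\phi$ in part (1): one must carefully track codimensions in $\sH$ versus in its fibers to confirm that the non-relatively-CM locus really has codimension $\geq 2$ in $\sH$, and then apply the $S_2$-extension argument on the $S_2,G_1$ scheme $\sH$ to upgrade the codimension-$1$ agreement into a genuine global isomorphism.
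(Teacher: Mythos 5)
Your proposal is correct, and the two halves relate differently to the paper. Part (1) is essentially the paper's own argument: both pass through the adjunction morphism $\omega_{\sX/B}(\sH)|_{\sH}\to\omega_{\sH/B}$ of Proposition \ref{prop:restriction_Cartier_divisor}, upgrade it to a global isomorphism by the $S_2$-extension principle, and then push the $S_3$ hypothesis on $\omega_X$ through Facts \ref{fact:S_d_pullback} and \ref{fact:S_d_hyperplane} to verify the depth condition of Proposition \ref{prop:base_change_complex}.\ref{itm:base_change_complex:S2}. Part (2), however, takes a genuinely different and cleaner route: the paper deletes the base locus $\sH_0\cap\sH_\infty$ and re-runs the adjunction argument on the complement to identify $\omega_{\sH/B}$ with $\omega_{\sX/B}(\sH)|_\sH$ there, whereas you observe directly that near $P\notin\sH_\infty$ the pencil is the graph of $-f_0/f_\infty$, so $p_1|_{\sH}$ is a local isomorphism onto an open subset of $X$ and $\omega_{\sH/B}\cong\omega_{\sH}\cong\omega_X$ locally (via Proposition \ref{prop:base_change_complex}.\ref{itm:base_change_complex:relative_over_smooth} and \ref{prop:base_change_complex}.\ref{itm:base_change_complex:open_set}). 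This transfers the depth defect at $P$ with no bookkeeping and is arguably the more transparent argument.

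One imprecision worth fixing in part (1): Fact \ref{fact:S_d_pullback} raises depth by $\depth\sO_{(\pi_1\textrm{-fiber}),P}$, which is $1$ only when $P$ is a \emph{closed} point of its fiber $\{\pi_1(P)\}\times B\cong\bP^1$; at the generic point of $\{x\}\times B$ for $x\in\sH_0\cap\sH_\infty$ (such points lie in $\sH$) the gain is $0$. Consequently, $S_2$ of $\omega_X$ alone does not yield $S_2$ of $\pi_1^*\omega_X(\sH)|_\sH$ at those base-locus points, and even with $\omega_X$ being $S_3$ the sheaf $\omega_{\sH/B}$ need not be $S_3$ there. Neither issue damages your proof: in part (1) the $S_3$ hypothesis supplies $\depth(\pi_1^*\omega_X)_Q\geq\min\{3,\dim\sO_{\sX,Q}\}$ even where the gain is $0$, which is all the $S_2$-extension step needs, and the final depth bound is only required at points of $\sH_0$, all of which lie over the closed point $0\in B$ and hence do enjoy the $+1$. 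You should simply cite the $S_3$ hypothesis rather than Proposition \ref{prop:base_change_complex}.\ref{itm:base_change_complex:omega_S2} when verifying that the source of $\phi$ is $S_2$, and restrict the $S_3$ claim for $\omega_{\sH/B}$ to the locus over closed points of $B$.
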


\begin{proof}
Notice that by flatness of $\sH$ and by Lemma \ref{lem:ideal_sheaves_line_bundles_flat}, it does make sense to talk about $\omega_{\sH/B}$. Define $\sX := X \times B$. Then $\sH$ is a hypersurface of $\sX$. By Proposition \ref{prop:restriction_Cartier_divisor}.\ref{itm:restriction_Cartier_divisor:sheaf} there is a homomorphism $\omega_{\sX/B}(\sH)|_{\sH} \to \omega_{\sH/B}$, which is isomorphism in codimension one, over the Gorenstein locus of $\sH$. Fix this homomorphism for the course of the proof. 

Now, we show point \eqref{itm:restriction_map_pencil:omega_X_S3}.   If $\omega_X$ is $S_3$, then so is $\omega_{\sX/B} \cong p_1^* \omega_X$ by  Lemma \ref{lem:S_d_pullback}. Hence, by Fact \ref{fact:S_d_hyperplane}.\ref{itm:S_d_hyperplane:second},  $\omega_{\sX/B}(\sH)|_{\sH}$ is $S_2$. Then, since $\omega_{\sH/B}$ is $S_2$ by Proposition \ref{prop:base_change_complex}.\ref{itm:base_change_complex:relative_omega_S2}, $\omega_{\sX/B}(\sH)|_{\sH} \to \omega_{\sH/B}$ is isomorphism everywhere by \cite[Theorem 1.9 and Theorem 1.12]{HR_GD}. However,  for every $P \in \sX_0$,
\begin{equation}
\label{eq:restriction_map_pencil:S_4}
 \depth \omega_{\sX/B, P} = 
\underbrace{\depth \omega_{X, p_1(P)}  + 1}_{\textrm{Fact \ref{fact:S_d_pullback}, applied to $\omega_{\sX/B} \cong p_1^* \omega_X$}}  
\geq \underbrace{ \min \{ 3, \dim \sO_{X,p_1(P)} \} + 1}_{\omega_X \textrm{ is } S_3} 
= \min \{ 4, \dim \sO_{\sX,P} \} .
\end{equation}
But then, for every $P \in \sH_0$,
\begin{equation*}
\depth \omega_{\sH/B,P} 
=
\underbrace{\depth (\omega_{\sX/B}(\sH)|_{\sH})_P }_{\omega_{\sH/B} \cong \omega_{\sX/B}(\sH)|_{\sH}}
\geq
\underbrace{\min \{ 3, \dim \sO_{\sH,P} \}}_{\textrm{Fact \ref{fact:S_d_hyperplane}.\ref{itm:S_d_hyperplane:second} and \eqref{eq:restriction_map_pencil:S_4}}},
\end{equation*}
 which implies point \eqref{itm:restriction_map_pencil:omega_X_S3} by Proposition \ref{prop:base_change_complex}.\ref{itm:base_change_complex:S2}.

To prove point \eqref{itm:restriction_map_pencil:omega_X_not_S3}, denote by $U$ the  open set $p_1^{-1} (X \setminus (\sH_0 \cap \sH_{\infty} ) ) \subseteq \sX$. This is the set of points, the first coordinates of which are not contained in every element of the pencil $\sH \to B$. By Proposition \ref{prop:base_change_complex}.\ref{itm:base_change_complex:open_set} and \ref{prop:base_change_complex}.\ref{itm:base_change_complex:relative_open_set}, we may replace $\sX$ by $U$, or with other words, $X$ by $X \setminus (\sH_0  \cap \sH_{\infty})$. In particular, then  $\sH_0  \cap \sH_{\infty} = \emptyset$ and $P$ is  an arbitrary point of $\sH_0$, such that 
\begin{equation}
\label{eq:restriction_map_pencil:not_S_3}
 \depth \omega_{X,P} \not\geq \min\{3, \dim \sO_{X,P}\} .
\end{equation}
Then all fibers of the projection $p_1|_{\sH} : \sH \to X$ have dimension zero. So, for every $Q \in \sH$,
\begin{equation*}
\depth \omega_{\sX/B,Q} 
= \underbrace{\depth \omega_{X, p_1(Q)}  + 1}_{\textrm{Fact \ref{fact:S_d_pullback},  applied to $\omega_{\sX/B} \cong p_1^* \omega_X$}}  
\geq \underbrace{ \min \{ 2, \dim \sO_{X,p_1(Q)} \} + 1}_{\textrm{Proposition \ref{prop:base_change_complex}.\ref{itm:base_change_complex:omega_S2}}} 
= \min\{3, \dim \sO_{\sX,Q}\} .
\end{equation*}
Then, repeating the argument of the previous paragraph $\omega_{\sX/B}(\sH)|_{\sH} \cong \omega_{\sH/B}$. Also, at the fixed $P \in \sH_0$, the following computation estimates the depth more precisely.
\begin{equation}
\label{eq:restriction_map_pencil:not_S_4}
\depth \omega_{\sX/B,P} 
%
%
%
= 
\underbrace{\depth \omega_{X,P} + 1}_{\textrm{Fact \ref{fact:S_d_pullback},  applied to $\omega_{\sX/B} \cong p_1^* \omega_X$}}
 \not\geq
\underbrace{\min\{3,\dim \sO_{X,P}\}+1}_{\textrm{\eqref{eq:restriction_map_pencil:not_S_3}}}
=
\min\{4,\dim \sO_{\sX,P} \}
\end{equation}
 However, then 
\begin{equation*}
\depth \omega_{\sH/B,P} 
=
\underbrace{\depth (\omega_{\sX/B}(\sH)|_{\sH})_P }_{\omega_{\sH/B} \cong \omega_{\sX/B}(\sH)|_{\sH}}
\not \geq 	
\underbrace{\min\{3,\dim \sO_{\sH,P} \} }_{\textrm{by Fact \ref{fact:S_d_hyperplane}.\ref{itm:S_d_hyperplane:second}}},
\end{equation*}
which concludes the proof by Proposition \ref{prop:base_change_complex}.\ref{itm:base_change_complex:S2}.

\end{proof}

\begin{rem}
The condition $\sI_{\sH,\sX}$  being a line bundle in Lemma \ref{lem:restriction_map_pencil} might look superfluous for the first sight, since $\sH$ is a hypersurface in $\sX$. However, according to Section \ref{sec:notation}, the latter only means that $\sH$ is the zero locus of some special section of a line bundle. That is, $\sH$ or $\sH_b$ for some $b \in B$ could contain an entire irreducible component of $\sX$ or $\sX_b$, respectively. Then Proposition \ref{prop:restriction_Cartier_divisor} would not apply. Such situations should definitely be avoided.
\end{rem}

The following is the main construction to which Lemma \ref{lem:restriction_map_pencil} is applied in this section.

\begin{const}  
\label{const:main}
Consider a projective cone $X$ over a variety $Y$. Let $P$ be the vertex of $X$. Take two hypersurfaces in $X$. The first one $H$ is a projective cone over a  degree $d$ generic hypersurface $D$ of $Y$. The second one $\widetilde{H}$ is a general degree $d$ hypersurface  of $X$. Denote by $\sH \to B$ the pencil generated by $H$ and $\widetilde{H}$ (for which $H = \sH_0$ and $\widetilde{H} = \sH_{\infty}$). Throughout the paper we allow ourselves to replace this family by its restriction to a small enough open neighborhood of $0 \in B$. Furthermore, when we compute stable reduction in Section \ref{sec:stable_reduction}, we will assume that $d \gg 0$. 
\end{const}

\begin{lem}
\label{lem:construction_main_consequences}
In the situation of Construction \ref{const:main}, if $X$ is $S_3$ and $Y$ is $R_1$, then 
\begin{enumerate}
 \item \label{itm:construction_main_consequences:normal} $\sH$ and the closed fibers of $f$ are normal varieties,
 \item \label{itm:construction_main_consequences:line_bundle} $\sI_{\sH,X \times B}$ is a line bundle,
\item \label{itm:construction_main_consequences:flat} $f$ is flat.
\end{enumerate}
\end{lem}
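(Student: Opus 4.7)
\emph{Preliminary reduction.} The first step would be to observe that $X$ itself is a normal variety. Indeed, $X$ is $S_3$ by hypothesis (hence $S_2$), and the singular locus of the cone $X$ over $Y$ is contained in $\{P\} \cup \pi^{-1}(Y_{\sing})$, where $\pi\colon V \to Y$ is the projection from the vertex. The vertex has codimension $\dim X \ge 2$, while the cone over $Y_{\sing}$ has codimension equal to $\codim_Y Y_{\sing} \ge 2$ (using that $Y$ is $R_1$). So $X$ is $R_1$, hence normal and in particular integral. By Lemma \ref{lem:S_d_pullback} applied to the smooth projection $X \times B \to X$ (where $B$ is smooth), $X \times B$ is also $S_3$ and integral.

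\emph{Proofs of \ref{itm:construction_main_consequences:line_bundle} and \ref{itm:construction_main_consequences:flat}.} By construction $\sH$ is cut out of $X \times B$ by the section $s := f_0 t_0 + f_{\infty} t_1$ of $\sO(d,1)$. Since $X\times B$ is integral it has a single associated component, namely itself, and $s$ is not identically zero (the two defining sections $f_0, f_{\infty}$ are linearly independent because $H \ne \widetilde{H}$). The last paragraph of Section \ref{sec:notation} then gives \ref{itm:construction_main_consequences:line_bundle}. For \ref{itm:construction_main_consequences:flat}, each fiber $\sH_b \subseteq X$ is cut out by the nonzero section $f_0 + \lambda f_{\infty}$ (or $f_{\infty}$ at $b=\infty$) of the line bundle $\sO_X(d)$ on the integral scheme $X$, so $\sI_{\sH_b,X}$ is a line bundle for every $b\in B$. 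Combined with \ref{itm:construction_main_consequences:line_bundle}, Lemma \ref{lem:ideal_sheaves_line_bundles_flat} concludes that $\sH \to B$ is flat.

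\emph{Proof of \ref{itm:construction_main_consequences:normal}.} I would check $R_1$ and $S_2$ separately. For $S_2$, both $\sH$ and its closed fibers are hypersurfaces with line-bundle ideal (by \ref{itm:construction_main_consequences:line_bundle} and \ref{itm:construction_main_consequences:flat}) inside the $S_3$ ambient schemes $X \times B$ and $X$ respectively; hence Fact \ref{fact:S_d_hyperplane}.\ref{itm:S_d_hyperplane:second} gives $S_2$ in each case. For $R_1$, the central fiber $\sH_0 = H$ is the cone over $D$, where $D$ is a generic degree-$d$ hypersurface of $Y$; classical Bertini on the smooth locus $Y_{\reg}$ makes $D$ smooth there, while a dimension count using $\codim_Y Y_{\sing} \ge 2$ shows $D \cap Y_{\sing}$ has codimension $\ge 2$ in $D$, so $D$ is $R_1$. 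Then the singular locus of $H$ is contained in $\{P\}\cup \pi^{-1}(D_{\sing})$, each of codimension $\ge 2$ in $H$. For a general fiber $\sH_b$ and for the total space $\sH$, the same type of Bertini argument applied to the general member $\widetilde{H}$ of a sufficiently generic pencil of degree-$d$ hypersurfaces of $X$ produces the required $R_1$ conclusion; after shrinking $B$ around $0$ as permitted by the construction, this takes care of all remaining closed fibers. Irreducibility throughout is a standard Bertini consequence ($Y$ and $X$ are integral of dimension $\ge 2$ and the relevant linear systems are very ample).

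\emph{Main obstacle.} The routine parts are \ref{itm:construction_main_consequences:line_bundle} and \ref{itm:construction_main_consequences:flat}, together with the $S_2$ half of \ref{itm:construction_main_consequences:normal}. The step that requires the most care is the $R_1$ property along the vertex $P$ and along the base locus $H \cap \widetilde{H}$: one must verify that the genericity built into the choice of $D$ and $\widetilde{H}$ (together with $Y$ being $R_1$) forces the singular locus of each relevant scheme to have codimension at least two, despite the ambient spaces $X$ and $X \times B$ themselves being singular.
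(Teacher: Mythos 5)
Your proposal is correct and follows essentially the same route as the paper: integrality of $X\times B$ gives the line-bundle and flatness claims via Lemma \ref{lem:ideal_sheaves_line_bundles_flat}, Fact \ref{fact:S_d_hyperplane} gives $S_2$ from the $S_3$ ambient scheme, and Bertini plus a codimension count gives $R_1$. You are in fact slightly more explicit than the paper about why the vertex and the locus over $Y_{\sing}$ do not spoil $R_1$, and about deducing normality of the non-central fibers, where the paper simply shrinks $B$ and invokes openness.
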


\begin{proof}
We use the notation $\sX:= X \times B$. Since $Y$ is a variety (i.e., integral), so are $D$, $X$, $\sX$, $\sH_0$ and $\sH_{\infty}$. By the definition of a pencil $\sH$ is defined by a single non-zero equation locally on $\sX$. So, since $\sX$ is integral, point \eqref{itm:construction_main_consequences:line_bundle} follows. Similarly, also $\sH_b$ for every $b \in B$ is defined locally by a single non-zero equation locally. Hence by integrality of $X$, $\sI_{\sH_b,\sX_b}$ is also a line bundle for every $b \in B$. Thus, Lemma \ref{lem:ideal_sheaves_line_bundles_flat} yields point \eqref{itm:construction_main_consequences:flat}.

To prove point \eqref{itm:construction_main_consequences:normal}, note that $\sX$ is $S_3$ by Lemma \ref{lem:S_d_pullback} and by the assumption of the lemma. So, by Fact \ref{fact:S_d_hyperplane}, $\sH$ and the closed fibers of $\sH$ are $S_2$ (Remember, in Construction \ref{const:main} we allowed ourselves to shrink $B$ around $0 \in B$). Since $D$ is general and $Y$ is $R_1$, $D$ is $R_1$ as well by Bertini's theorem (c.f.,  \cite[Theorem 17.16]{HJ_AG}). Therefore, so is $H$. Then, by possibly shrinking $B$, each closed fiber of $\sH$ is $R_1$. Thus all closed fibers of $\sH$, and $\sH$ itself are normal. 
\end{proof}

\begin{thm}
\label{thm:family}
In the situation of Construction \ref{const:main}, if $\dim X \geq 3$, $X$ is $S_3$, $Y$ is  $R_1$, and $\depth \omega_{X,P}=2$, then
\begin{equation}
\label{eq:not_compatible}
\omega_{\sH/B} |_{\sH_0} \not\cong \omega_{\sH_0}.
\end{equation}
In addition:
\begin{enumerate}
\item \label{itm:Q_Gor} if $\omega_X$ is a $\bQ$-line bundle, then $\omega_{\sH/B}$ is a $\bQ$-line bundle. In particular then $\omega_{\sH_b}$ is a $\bQ$-line bundle for all $b \in B$,
\item \label{itm:S_d} if $X$ is $S_d$ and $\depth \sO_{X,P} = d$, then  $\sH_b$ is $S_{d-1}$ , for all $b \in B$, and $\depth \sO_{\sH_0,P} = d-1$.
\end{enumerate}

\end{thm}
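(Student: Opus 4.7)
The plan is to deduce the main incompatibility from Lemma~\ref{lem:restriction_map_pencil}(\ref{itm:restriction_map_pencil:omega_X_not_S3}) applied at the vertex $P$. All structural hypotheses of that lemma are already delivered by Lemma~\ref{lem:construction_main_consequences}: $f$ is flat, $\sI_{\sH,X\times B}$ is a line bundle, and both $\sH$ and its closed fibers are normal (hence $S_2$ and $G_1$). It remains to check the two point-theoretic conditions on $P$. First, $P \in \sH_0 = H$ because $H$ is the cone over $D$ and thus passes through the vertex. Second, the hypersurfaces in $|\sO_X(d)|$ vanishing at $P$ form a proper linear subsystem, so a general $\widetilde H = \sH_\infty$ avoids $P$. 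Since $\dim X \geq 3$, one has $\dim \sO_{X,P} = \dim X \geq 3$, hence $\min\{3,\dim\sO_{X,P}\} = 3 > 2 = \depth\omega_{X,P}$, and Lemma~\ref{lem:restriction_map_pencil}(\ref{itm:restriction_map_pencil:omega_X_not_S3}) yields \eqref{eq:not_compatible}.

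For (\ref{itm:Q_Gor}), set $\sX := X\times B$ with projection $p_1 \colon \sX \to X$, and choose $m>0$ with $\omega_X^{[m]}$ a line bundle. Flatness of $p_1$ gives $\omega_{\sX/B}^{[m]} \cong p_1^*\omega_X^{[m]}$, which is therefore a line bundle; so is $\omega_{\sX/B}^{[m]}(m\sH)$, and its restriction to $\sH$. By Proposition~\ref{prop:restriction_Cartier_divisor}(\ref{itm:restriction_Cartier_divisor:sheaf}), the morphism $\omega_{\sX/B}(\sH)|_{\sH} \to \omega_{\sH/B}$ is an isomorphism on the relative Cohen-Macaulay locus, whose complement in $\sH$ has codimension at least two (the closed fibers being normal are Cohen-Macaulay in codimension one). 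Both $\omega_{\sH/B}^{[m]}$ and $\bigl(\omega_{\sX/B}(\sH)|_{\sH}\bigr)^{[m]}$ are reflexive $S_2$ sheaves on the normal scheme $\sH$ (Proposition~\ref{prop:base_change_complex}(\ref{itm:base_change_complex:relative_omega_S2})) that agree in codimension one, hence are isomorphic; so $\omega_{\sH/B}^{[m]}$ is a line bundle. The fiberwise statement then follows from the observation recalled at the end of Section~\ref{sec:notation} that on families of $S_2,G_1$ schemes a $\bQ$-line bundle relative canonical sheaf induces $\bQ$-line bundle fiberwise canonical sheaves.

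For (\ref{itm:S_d}), note that the ideal sheaf $\sI_{\sH_b,X}$ is a line bundle for every $b\in B$ by Lemma~\ref{lem:ideal_sheaves_line_bundles_flat}, so $\sH_b$ is cut out locally by a single non-zero divisor in $X$. Since $X$ is $S_d$, Fact~\ref{fact:S_d_hyperplane}(\ref{itm:S_d_hyperplane:second}) applied at every point of $\sH_b$ shows that $\sH_b$ is $S_{d-1}$. For the depth at $P$: the pencil equation $f_0 + b f_\infty$ vanishes at $P$ only for $b=0$ (since $f_0(P)=0$ while $f_\infty(P)\neq 0$), so $P$ lies only in the central fiber. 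Then $\depth\sO_{X,P}=d$ together with Fact~\ref{fact:S_d_hyperplane}(\ref{itm:S_d_hyperplane:first}) gives $\depth\sO_{\sH_0,P}=d-1$. The one non-routine step is the reflexive-hull argument in (\ref{itm:Q_Gor}), where one must ensure the codimension-one identification from Proposition~\ref{prop:restriction_Cartier_divisor} propagates to an honest isomorphism after taking $m$-th reflexive powers; this is what the normality of $\sH$ is really for.
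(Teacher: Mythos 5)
Your proposal is correct and follows essentially the same route as the paper: the main statement via Lemma~\ref{lem:construction_main_consequences} feeding into Lemma~\ref{lem:restriction_map_pencil}.\ref{itm:restriction_map_pencil:omega_X_not_S3} at the vertex, addendum (\ref{itm:Q_Gor}) via the codimension-one identification of Proposition~\ref{prop:restriction_Cartier_divisor} upgraded to reflexive powers on the normal total space and fibers, and addendum (\ref{itm:S_d}) via Fact~\ref{fact:S_d_hyperplane}. The only (immaterial) deviation is that for the fiberwise $\bQ$-line bundle claim you invoke the general $S_2,G_1$ descent remark from Section~\ref{sec:notation}, whereas the paper applies the reflexive-power isomorphism directly on each fiber.
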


\begin{proof}
By Lemma \ref{lem:construction_main_consequences}, we may apply Lemma \ref{lem:restriction_map_pencil}.\ref{itm:restriction_map_pencil:omega_X_not_S3}, to obtain the main statement of the theorem. 

To prove addendum \eqref{itm:Q_Gor}, note that the normality  of $\sH$ and $\sH_b$ for every $b \in B$, \cite[Theorem 1.12]{HR_GD} and  Proposition \ref{prop:restriction_Cartier_divisor} imply that
\begin{equation}
\label{eq:restr_omega_power} 
\omega_{\sH_b}^{[n]} \cong (\omega_{\sX_b}(\sH_b)|_{\sH_b})^{[n]} \textrm{ for any $b \in B$, and } \omega_{\sH / B}^{[n]} \cong (\omega_{\sX/B}(\sH)|_{\sH})^{[n]} 
\end{equation}
for all $n \in \bZ$. Hence if $\omega_{X}$ is a $\bQ$-line bundle, then  \eqref{eq:restr_omega_power} implies that so is $\omega_{\sH/B}$ and $\omega_{\sH_b}$ for all $b \in B$. To prove \eqref{itm:S_d} we use Fact \ref{fact:S_d_hyperplane} once again.
\end{proof}

\begin{cor} [ (= Theorem \ref{thm:main1}) ]
\label{cor:result1}
For each $n \geq 3$ and $n>j \geq 2$ there is a flat family $\sH \to B$ of  $S_{j}$ (but not $S_{j+1}$), normal varieties of dimension $n$ over some open set $B \subseteq \bP^1$, with $\omega_{\sH/B}$ a $\bQ$-line bundle, such that
\begin{equation*}
\left. \omega_{\sH/B} \right|_{\sH_0} \not\cong \omega_{\sH_0}.
\end{equation*}
Moreover, the general fiber of $\sH$ can be chosen to be smooth and the central fiber to have only one singular point.
\end{cor}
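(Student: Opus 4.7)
The plan is to assemble the corollary directly from the two main engines already built: Proposition \ref{prop:existence}, which produces a projective cone with prescribed depth at the vertex and with prescribed depth of $\omega$ at the vertex, and Theorem \ref{thm:family}, which turns such a cone into a pencil whose relative canonical sheaf fails to commute with restriction to the central fiber.

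First, I would invoke Proposition \ref{prop:existence} with its parameter ``$n$'' set equal to our $n$, and with $d := j+1$ and $l := 2$. The three numerical constraints are straightforward to check: $l \leq d$ amounts to $2 \leq j+1$, which holds because $j \geq 2$; the inequality $d+l \leq n+2$ becomes $j+3 \leq n+2$, i.e.\ $j \leq n-1$, which is our hypothesis $j < n$; and $2 \leq d, l \leq n$ holds since $n \geq 3$ and $2 \leq j+1 \leq n$. The proposition then yields an $(n+1)$-dimensional projective cone $X$ over a smooth projective variety $Y$, with vertex $P$, such that $X$ is $S_{j+1}$ with $\depth \sO_{X,P} = j+1$, such that $\omega_X$ has $\depth \omega_{X,P} = 2$, and such that $\omega_X$ is a $\bQ$-line bundle.

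Second, I would feed this $X$ into Construction \ref{const:main}, i.e.\ form the pencil $f \colon \sH \to B$ generated by the cone $H$ over a generic degree $d$ hypersurface $D \subset Y$ and a general degree $d$ hypersurface $\widetilde H \subset X$. The hypotheses of Theorem \ref{thm:family} are satisfied: $\dim X = n+1 \geq 4$, $X$ is $S_{j+1}$ and therefore $S_3$ since $j+1 \geq 3$, $Y$ is smooth hence $R_1$, and $\depth \omega_{X,P} = 2$ by construction. The theorem then gives $\omega_{\sH/B}|_{\sH_0} \not\cong \omega_{\sH_0}$; Addendum \eqref{itm:Q_Gor} yields that $\omega_{\sH/B}$ is a $\bQ$-line bundle; and Addendum \eqref{itm:S_d}, applied with its ``$d$'' equal to $j+1$, says the fibers $\sH_b$ are $S_j$ and $\depth \sO_{\sH_0,P}=j$, so that $\sH_0$ is not $S_{j+1}$. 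Lemma \ref{lem:construction_main_consequences} simultaneously guarantees that $\sH$ and the closed fibers are normal varieties of dimension $n$, that $\sI_{\sH,X\times B}$ is a line bundle, and that $f$ is flat.

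Third, for the ``moreover'' clause I would rely on Bertini-type genericity. Since $Y$ is smooth and $D$ is a generic hypersurface section of large enough degree, $D$ can be taken smooth, and hence the cone $H = \sH_0$ is smooth away from the vertex; so $\sH_0$ has exactly one singular point $P$. For the general fiber, I choose $\widetilde H$ generically: since $X$ is singular only at $P$ and $\widetilde H$ is a degree $d \geq 3$ hypersurface, genericity forces $\widetilde H$ to miss the single point $P$ and to be smooth on $X \setminus \{P\}$ by Bertini. The standard Bertini-in-a-pencil argument, applied to the base-point-free restriction of the pencil over $X \setminus \{P\}$, then shows that $\sH_b$ is smooth for general $b \in B$; after replacing $B$ by a small open neighborhood of $0$ which excludes $\infty$ and the finitely many other non-smooth values of $b$, all fibers except $\sH_0$ are smooth. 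There is no real obstacle in the argument beyond these numerical and Bertini checks, since the deep content of Corollary \ref{cor:result1} has already been packaged in Proposition \ref{prop:existence} and Theorem \ref{thm:family}.
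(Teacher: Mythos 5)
Your proof is correct and follows exactly the paper's route: the paper's own proof of Corollary \ref{cor:result1} is precisely the one-line combination of Proposition \ref{prop:existence} with $d=j+1$, $l=2$, Construction \ref{const:main}, and Theorem \ref{thm:family}. Your explicit verification of the numerical hypotheses and the Bertini argument for the ``moreover'' clause simply spell out details the paper leaves implicit.
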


\begin{proof}
 It follows by combining Proposition \ref{prop:existence} (setting $d=j+1$ and $l=2$), Construction \ref{const:main} and Theorem \ref{thm:family}.
\end{proof}

\section{Degenerations and Serre's condition}
\label{sec:Cohen-Macaulay}

We turn to proving the statements  relating Serre's condition $S_d$ to degenerations of flat families. The first half of the section is devoted to the following statement.

\begin{thm} [ (= Theorem \ref{thm:main2}) ]
\label{thm:central_fiber_S_n-1}
If $f : \sH \to B$ is a flat family of schemes of pure dimension $n$ over a smooth  curve, such that a component of the locus
\begin{equation}
\label{eq:central_fiber_S_n:locus}
 \overline{\{ x \in \sH | x \textrm{ is closed, } \depth \sO_{\sH_{f(x)},x} = n-1 \} }
\end{equation}
is contained in the special fiber $\sH_0$, then  the restriction homomorphism $\omega_{\sH/B}|_{\sH_0} \to \omega_{\sH_0}$ is not an isomorphism.
\end{thm}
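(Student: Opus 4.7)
The plan is to translate the failure of base change into a $\sTor$-vanishing condition, reduce to a local question at a closed point of $C$, and then use the maximality of $C$ as a component of $L$ to produce the required torsion.

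Step 1 (setting up the exact sequence). I would apply $(-)\otimes^L\omega^\bullet_{\sH/B}$ to the short exact sequence $0\to\sO_\sH\xrightarrow{\cdot t}\sO_\sH\to\sO_{\sH_0}\to 0$, where $t$ is a local parameter of $\sO_{B,0}$. Using Proposition~\ref{prop:base_change_complex}.\ref{itm:base_change_complex:restriction_isomorphism} to identify $\omega^\bullet_{\sH/B}|^L_{\sH_0}\cong\omega^\bullet_{\sH_0}$ gives the exact triangle $\omega^\bullet_{\sH/B}\xrightarrow{\cdot t}\omega^\bullet_{\sH/B}\to\omega^\bullet_{\sH_0}\to [+1]$. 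The long exact sequence in cohomology sheaves, together with the fact that $\omega_{\sH/B}$ is $S_2$ (Proposition~\ref{prop:base_change_complex}.\ref{itm:base_change_complex:relative_omega_S2}) with associated points the generic points of the components of $\sH$ — none of which lie in $\sH_0$, by flatness of $f$ — shows that $\cdot t$ is injective on $\omega_{\sH/B}$ and yields
$$0\to\omega_{\sH/B}|_{\sH_0}\to\omega_{\sH_0}\to\sG[t]\to 0,$$
where $\sG:=h^{-(n-1)}(\omega^\bullet_{\sH/B})$ and $\sG[t]:=\ker(\cdot t:\sG\to\sG)$. Hence the restriction homomorphism of Proposition~\ref{prop:base_change_complex}.\ref{itm:base_change_complex:restriction_homomorphism} is an isomorphism iff $\sG[t]=0$, i.e.\ iff $\sG$ is $B$-flat.

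Step 2 (localizing to a closed point). Pick a closed point $x_0\in C$ with $\depth\sO_{\sH_{f(x_0)},x_0}=n-1$; such points are dense in $C$ since $C$ is an irreducible component of the closure of the locus of such points. At $x_0$ we have $\dim_{\sH_{f(x_0)}}x_0=0$, so Proposition~\ref{prop:base_change_complex}.\ref{itm:base_change_complex:relative_Sd_at_closed_point} tells us that the top non-vanishing cohomology of $\omega^\bullet_{\sH/B}$ at $x_0$ is in degree $-(n-1)$; hence $\sG_{x_0}\neq 0$. The theorem will follow once we exhibit a nonzero $t$-torsion element of $\sG_{x_0}$, i.e.\ an associated prime of $\sG_{x_0}$ as an $\sO_{\sH,x_0}$-module that contains $t$.

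Step 3 (exploiting maximality of $C$). This is the substantive step. I would analyze the support of $\sG$ near $x_0$. On one hand, $\supp\sG\supseteq L\supseteq C$, because $\sG$ is nonzero at every closed point of $L$. On the other hand, $\sG_P=0$ whenever $P$ is a generic point of a component of $\sH$ (since there $\depth+\dim=n$, so Proposition~\ref{prop:base_change_complex}.\ref{itm:base_change_complex:relative_Sd_at_closed_point} gives top degree $-n<-(n-1)$). After localizing at the generic point $\eta_C$ of $C$, the assertion that $C$ is a \emph{component} of $L$ means that no irreducible closed subset of $L$ properly contains $C$; concretely, any strict generalization of $\eta_C$ that still lies in $L$ would span a strictly larger irreducible subset of $L$ containing $C$, violating maximality. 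This ``isolation'' of $\eta_C$ in $L$ forces the part of $\supp\sG$ passing through $\eta_C$ to lie inside $\sH_0=V(t)$, so $t$ is nilpotent on the corresponding summand of $\sG_{\eta_C}$. Consequently $\sG_{\eta_C}[t]\neq 0$, which in turn implies $\sG[t]\neq 0$, and so $\omega_{\sH/B}|_{\sH_0}\to\omega_{\sH_0}$ is not an isomorphism.

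The hard part will be the ``isolation'' claim in Step 3: a priori $\supp\sG$ could extend outside $\sH_0$ through non-Cohen–Macaulay points of the generic fiber, and one must check that the maximality of $C$ among components of $L$ really does prevent such an extension from annihilating the $t$-torsion contribution coming from $C$. An alternative, perhaps cleaner, route is via local duality: using the truncation triangle $\omega_R[n+1]\to\omega^\bullet_R\to\sG_{x_0}[n]\to[+1]$ for $R=\sO_{\sH,x_0}$ (whose $\omega^\bullet_R$ has cohomology only in degrees $-(n+1),-n$ since $R$ has $\depth n$ and $\dim n+1$ by Fact~\ref{fact:S_d_hyperplane}), one derives $H^2_{\frm}(\omega_R)\cong H^0_{\frm}(\sG_{x_0})$; showing the right-hand side is nonzero then forces $\depth\omega_{\sH,x_0}\le 2$, whence by Fact~\ref{fact:S_d_hyperplane} $\depth(\omega_{\sH/B}|_{\sH_0})_{x_0}\le 1$, contradicting $\omega_{\sH_0}$ being $S_2$ at $x_0$ (Proposition~\ref{prop:base_change_complex}.\ref{itm:base_change_complex:omega_S2}).
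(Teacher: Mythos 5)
Your proposal is correct and is essentially the paper's own proof: you arrive at the same exact sequence $0 \to \omega_{\sH/B}|_{\sH_0} \to \omega_{\sH_0} \to \sG[t] \to 0$ (the paper writes $\sG[t]$ as $\sTor^1_{\sH}(\sG,\sO_{\sH_0})$ via the truncation triangle for $\omega_{\sH/B}^{\bullet}$ and its Lemma on $\sTor$), and you use the same mechanism, namely that a component of the depth-$(n-1)$ locus contained in $\sH_0$ is an associated component of $\sG=h^{-(n-1)}(\omega_{\sH/B}^{\bullet})$ whose associated prime contains $t$. The ``isolation'' worry in your Step 3 is dispatched exactly as you suspect: after shrinking $\sH$ to a neighborhood where every closed point has fibrewise depth at least $n-1$, the locus \eqref{eq:central_fiber_S_n:locus} equals $\supp\sG$, so $\eta_C$ is a minimal (hence associated) prime of $\sG$ containing $t$, and localizing there makes $t$ act nilpotently on the nonzero module $\sG_{\eta_C}$.
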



\begin{rem}
By the restriction homomorphism $\omega_{\sH/B} \to \omega_{\sH_0}$ we mean any homomorphism obtained as in Proposition \ref{prop:base_change_complex}.\ref{itm:base_change_complex:restriction_homomorphism}.
\end{rem}

Theorem \ref{thm:central_fiber_S_n-1} might look technical, but it applies for example to the special case, when the general fiber is Cohen-Macaulay and the central fiber contains at least one closed point with depth $n-1$. This yields the following corollary.

\begin{cor} [ (= Corollary \ref{cor:main}) ]
\label{cor:no_S_n-1}
If $f : \sH \to B$ is  a flat family   of  schemes of pure dimension $n$ such that $\omega_{\sH/B}$ is compatible with base change  and the general fiber of $f$ is Cohen-Macaulay, then the central fiber of $f$ cannot have a closed point $x$, such that $\depth \sO_{\sH_{f(x)},x}=n-1$.
\end{cor}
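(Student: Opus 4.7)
My plan is to prove the corollary directly as the contrapositive of Theorem~\ref{thm:central_fiber_S_n-1}. Suppose for contradiction that $\omega_{\sH/B}$ is compatible with base change, that the general fiber of $f$ is Cohen--Macaulay, and yet there exists a closed point $x\in\sH_{0}$ with $\depth\sO_{\sH_{f(x)},x}=n-1$. The goal is to verify the hypothesis of Theorem~\ref{thm:central_fiber_S_n-1}, namely that some irreducible component of the locus
\begin{equation*}
L:=\overline{\{\,y\in\sH\mid y\text{ is closed, } \depth\sO_{\sH_{f(y)},y}=n-1\,\}}
\end{equation*}
is contained in $\sH_{0}$; once this is established, the theorem will directly contradict the base-change compatibility assumption.

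The central observation is that at a closed point of a Cohen--Macaulay fiber of pure dimension $n$, the local ring has depth exactly $n$ (by definition of Cohen--Macaulay), never $n-1$. Hence no closed point of any Cohen--Macaulay fiber lies in the set defining $L$. Since the locus of points where a flat morphism is relatively Cohen--Macaulay is open (see \cite[IV$_2$, Proposition~6.8.3]{GA_EGA_IV_II} or the discussion following Fact~\ref{fact:S_d_pullback}), after shrinking $B$ to a suitable neighborhood of $0$ we may assume that every fiber $\sH_{b}$ with $b\neq 0$ is Cohen--Macaulay. Consequently $L\subseteq\sH_{0}$, and because $x\in L$ by assumption, the set $L$ is nonempty and every one of its irreducible components through $x$ lies in $\sH_{0}$. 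This verifies the hypothesis of Theorem~\ref{thm:central_fiber_S_n-1}.

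Applying Theorem~\ref{thm:central_fiber_S_n-1} yields that the restriction homomorphism $\omega_{\sH/B}|_{\sH_{0}}\to\omega_{\sH_{0}}$ constructed in Proposition~\ref{prop:base_change_complex}.\ref{itm:base_change_complex:restriction_homomorphism} is not an isomorphism, contradicting the assumed base-change compatibility of $\omega_{\sH/B}$ at $0\in B$. The only subtle point worth highlighting is the reconciliation of the phrase ``compatible with base change'' with the specific statement of Theorem~\ref{thm:central_fiber_S_n-1}: the theorem rules out the \emph{natural} restriction map being an isomorphism, so we implicitly use (as throughout the paper) that compatibility with base change is encoded precisely by this natural map being an isomorphism. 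No other step of the argument presents any real obstacle, which is consistent with the author's description of the result as ``the contrapositive of Theorem~\ref{thm:main2} when the general fiber is Cohen--Macaulay''.
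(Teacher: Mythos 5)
Your proof is correct and is exactly the argument the paper intends: the paper presents the corollary as the special case of Theorem \ref{thm:central_fiber_S_n-1} in which Cohen--Macaulayness of the general fiber (together with openness of the relative Cohen--Macaulay locus) forces the whole depth-$(n-1)$ locus, hence each of its components, into $\sH_0$. Your write-up simply makes explicit the shrinking of $B$ and the identification of ``compatible with base change'' with the natural restriction map being an isomorphism, both of which are consistent with the paper's conventions.
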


\begin{prop} [ (= Proposition \ref{prop:main}) ]
\label{prop:sharp}
Corollary \ref{cor:no_S_n-1} is sharp in the sense that $n-1$ cannot be replaced with $i$ for any $2 \leq i<n-1$. 
\end{prop}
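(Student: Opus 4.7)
The plan is to apply Construction \ref{const:main} to a projective cone $X$ of dimension $n+1$ supplied by Proposition \ref{prop:existence} with the depth parameters $d = i + 1$ and $l = 3$. The admissibility conditions $2 \leq d, l \leq n$, $l \leq d$, and $d + l \leq n + 2$ translate to $i \geq 2$, $i \leq n - 2$, and $n \geq 3$, all of which follow from the assumed range $2 \leq i < n - 1$ (which already forces $n \geq 4$). Thus Proposition \ref{prop:existence} produces an $(n+1)$-dimensional projective cone $X$ over a smooth projective variety $Y$, with vertex $P$, such that $X$ is $S_{i+1}$ with $\depth \sO_{X,P} = i+1$, the canonical sheaf $\omega_X$ is $S_3$ with $\depth \omega_{X,P} = 3$, and $\omega_X$ is a $\bQ$-line bundle. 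In particular $X$ is $S_3$ and $Y$ is $R_1$, so the hypotheses of Construction \ref{const:main} and Lemma \ref{lem:construction_main_consequences} hold.

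Next, apply Construction \ref{const:main} to $X$, choosing the general degree $d$ hypersurface $\widetilde{H} = \sH_\infty$ so that it avoids $P$; this is possible since $|\sO_X(d)|$ separates $P$ from a general member. Lemma \ref{lem:construction_main_consequences} then yields that $f : \sH \to B$ is flat with $\sH$ and every closed fiber normal, in particular $S_2$ and $G_1$. For $b$ in a small neighborhood of $0$ with $b \neq 0$, the fiber $\sH_b$ does not contain $P$ (in the pencil equation $s f_0 + t f_\infty$, evaluation at $P$ gives $t f_\infty(P) \neq 0$), so $\sH_b$ is a Cartier hypersurface inside the smooth quasi-projective variety $X \setminus P$, and hence Cohen-Macaulay. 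Consequently the general fiber of $f$ is Cohen-Macaulay.

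Because $\omega_X$ is $S_3$, Lemma \ref{lem:restriction_map_pencil}.\ref{itm:restriction_map_pencil:omega_X_S3} delivers the isomorphism $\omega_{\sH/B}|_{\sH_0} \cong \omega_{\sH_0}$, while for $b \neq 0$ near $0$ the fiber is Cohen-Macaulay so Proposition \ref{prop:base_change_complex}.\ref{itm:base_change_complex:Cohen_Macaulay} yields base change compatibility automatically. Hence $\omega_{\sH/B}$ is compatible with base change throughout $B$. Finally, Theorem \ref{thm:family}.\ref{itm:S_d} applied with $d = i+1$ produces the desired closed point $P \in \sH_0$ with $\depth \sO_{\sH_0, P} = d - 1 = i$, establishing the sharpness claim. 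The only delicate bookkeeping is ensuring that the two conditions $\depth \sO_{X,P} = i+1$ and $\omega_X$ being $S_3$ are compatible on a single variety $X$, which is precisely the content of Proposition \ref{prop:existence} under the inequality $(i+1) + 3 \leq n+2$; this inequality is exactly the range $i \leq n-2$ in which the claim is made, so the sharpness range matches the range of applicability of the construction without slack.
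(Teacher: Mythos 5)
Your argument is correct and follows the paper's own proof essentially verbatim: the same parameter choice $d=i+1$, $l=3$ in Proposition \ref{prop:existence}, followed by Construction \ref{const:main}, Lemma \ref{lem:construction_main_consequences}, and Lemma \ref{lem:restriction_map_pencil}.\ref{itm:restriction_map_pencil:omega_X_S3}, with the general fiber Cohen--Macaulay because it misses the vertex. The only cosmetic difference is that for the final computation $\depth \sO_{\sH_0,P}=i$ you invoke Theorem \ref{thm:family}.\ref{itm:S_d}, whose stated hypotheses include $\depth \omega_{X,P}=2$ (false here, since $l=3$); the paper instead applies Fact \ref{fact:S_d_hyperplane} directly, which is all that addendum actually uses, so nothing is lost.
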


\begin{proof}
Fix a $2 \leq i < n-1$. Consider the projective cone $X$ given by Proposition \ref{prop:existence}, setting $d=i+1$ and $l=3$. Use then Construction \ref{const:main} for $X$.  By Lemma \ref{lem:construction_main_consequences}, this yields a flat family $f : \sH \to B$ of normal varieties for which Lemma \ref{lem:restriction_map_pencil}.\ref{itm:restriction_map_pencil:omega_X_S3} applies. That is, the restriction homomorphisms $\omega_{\sH/B}|_{\sH_b} \to \omega_{\sH_b}$ are isomorphisms for every $b \in B$. Finally, since $X$ is  Cohen-Macaulay outside of $P$ and $\depth \sO_{X,P}=i+1$, by Fact \ref{fact:S_d_hyperplane},  $\sH_b$ is Cohen-Macaulay outside of $P$, where  $\depth \sO_{\sH_0,P}=i$.
\end{proof}

We also need the following lemma in the proof of Theorem \ref{thm:central_fiber_S_n-1}.

\begin{lem}
\label{lem:tor}
If $f : \sH \to B$ is a flat morphism of schemes onto a smooth curve, $\sF$ is a coherent $\sO_{\sH}$-module on $\sH$ and $P \in \sH_0$, then 
\begin{enumerate}
 \item \label{itm:tor:1} $\sTor^1_{\sH}(\sF,\sO_{\sH_0})_P \neq 0$ if and only if $\sF$ has an associated component $W$ such that  $P \in W \subseteq \sH_0$,
 \item \label{itm:tor:others}  $\sTor^i_{\sH}(\sF,\sO_{\sH_0})=0$ for $i>1$.
\end{enumerate}
\end{lem}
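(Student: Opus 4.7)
The plan is to build a short locally free resolution of $\sO_{\sH_0}$ on $\sH$ and read off both statements from it. Since $B$ is a smooth curve, the ideal sheaf of $0 \in B$ is locally principal, generated by a uniformizer $t$. By flatness of $f$, the pullback (still called $t$) is a local equation for $\sH_0 \subseteq \sH$, and multiplication by $t$ is injective on $\sO_{\sH,P}$ for every $P \in \sH_0$. Thus, locally near $P$, we have a length--two free resolution
\[
0 \longrightarrow \sO_{\sH} \xrightarrow{\ \cdot t\ } \sO_{\sH} \longrightarrow \sO_{\sH_0} \longrightarrow 0.
\]

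Tensoring this resolution with $\sF$ and taking cohomology immediately yields part \eqref{itm:tor:others}, and identifies
\[
\sTor^1_{\sH}(\sF, \sO_{\sH_0})_P \;\cong\; \ker\bigl( \cdot t \colon \sF_P \to \sF_P \bigr).
\]
So part \eqref{itm:tor:1} reduces to the claim that this kernel is non-zero if and only if $\sF$ has an associated component $W$ with $P \in W \subseteq \sH_0$.

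If $\sF_P = 0$, both conditions fail (if some associated component $W$ contained $P$, localizing a witness element of its stalk would give $\sF_P \neq 0$), so assume $\sF_P \neq 0$. The kernel is non-zero precisely when $t$ is a zero-divisor on $\sF_P$. By the characterization of zero-divisors recalled in Section \ref{sec:notation}, this is equivalent to $t$ lying in some prime $\frp \subseteq \sO_{\sH,P}$ corresponding to an associated point $\eta$ of $\sF$ that generalizes to $P$. The condition $t \in \frp$ means $\eta \in V(t) = \sH_0$, and hence the associated component $W := \overline{\{\eta\}}$ is contained in the closed subscheme $\sH_0$; the fact that $\eta$ generalizes to $P$ is precisely $P \in W$. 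Conversely, any associated component $W$ with $P \in W \subseteq \sH_0$ supplies such a prime and thus a non-trivial kernel.

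The argument is essentially mechanical; the only point requiring care is the translation between associated primes of $\sF_P$ inside the local ring $\sO_{\sH,P}$ and associated components of $\sF$ in $\sH$, which is exactly the dictionary recorded in the notation section.
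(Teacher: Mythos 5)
Your proposal is correct and follows essentially the same route as the paper: the paper likewise uses the local two-term resolution $0 \to \sO_{\sH} \xrightarrow{\cdot t} \sO_{\sH} \to \sO_{\sH_0} \to 0$ (via the long exact sequence of $\sTor$) to identify $\sTor^1_{\sH}(\sF,\sO_{\sH_0})_P$ with $\ker(\cdot t \colon \sF_P \to \sF_P)$ and kill the higher $\sTor$'s, then invokes the same zero-divisor/associated-component dictionary from the notation section. No gaps.
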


\begin{proof}
By restricting $B$, we may assume that $\sI_{0,B} \cong \sO_B$. Denote by $s$ a generator of $\sI_{0,B}$ and consider the following exact sequence.
\begin{equation}
\label{eq:fiber}
\xymatrix{
0 \ar[r] & \sO_{\sH} \ar[r]^{\cdot s} & \sO_{\sH} \ar[r] & \sO_{\sH_0} \ar[r] & 0 
}
\end{equation}
Then the long exact sequence of $\sTor_{\sH}^{\bullet}(\sF, \_)$ applied to \eqref{eq:fiber} yields
\begin{equation*}
\xymatrix{
\sTor^1_{\sH}(\sF,\sO_{\sH}) = 0 \ar[r] & \sTor^1_{\sH}(\sF,\sO_{\sH_0}) \ar[r] & \sF \ar[r]^{\cdot s} & \sF .
}
\end{equation*}
Hence $\sTor^1_{\sH}(\sF,\sO_{\sH_0})_P \neq 0$ if and only if $s$ annihilates something in $\sF_P$, if and only if $\sF$ has an associated component $W$ such that $P \in W \subseteq \sH_0$.

Another part of the long exact sequence of $\sTor_{\sH}^{\bullet}(\sF, \_)$ applied to \eqref{eq:fiber} yields the following for $i>1$.
\begin{equation*}
\xymatrix{
\sTor^i_{\sH}(\sF,\sO_{\sH}) = 0 \ar[r] & \sTor^i_{\sH}(\sF,\sO_{\sH_0}) \ar[r] & \sTor^{i-1}_{\sH}(\sF,\sO_{\sH}) =0
}
\end{equation*}
Hence, $\sTor^i_{\sH}(\sF,\sO_{\sH_0})=0$ indeed if $i>1$.
\end{proof}


\begin{proof}[Proof of Theorem \ref{thm:central_fiber_S_n-1}] 
Fix a closed point $x \in \sH_0$ with  $\depth \sO_{\sH_0,x} = n-1$,  contained in a component $W \subseteq \sH_0$ of the locus \eqref{eq:central_fiber_S_n:locus}. Notice that the locus \eqref{eq:central_fiber_S_n:locus} is $\supp (h^{-(n-1)}( \omega_{\sH/B}^{\bullet}))$ by Proposition \ref{prop:base_change_complex}.\ref{itm:base_change_complex:relative_Sd_at_closed_point}, and hence $W$ is also an associated component of $h^{-(n-1)}(\omega_{X/B}^{\bullet})$. Consider an open neighborhood of $x$, where every closed point has depth at least $n-1$. Replacing $\sH$ by this neighborhood all assumptions of the theorem stay valid, and moreover we may assume that every closed point of $\sH$ has depth at least $n-1$. In particular, then 
\begin{equation}
\label{eq:non-zero2}
h^i(\omega_{\sH/B}^{\bullet}) \neq 0 \Leftrightarrow i=-n \textrm{ or } -(n-1) .
\end{equation}
Define $\sE:= h^{-(n-1)}(\omega_{\sH/B}^{\bullet})$. By \eqref{eq:non-zero2}, there is an exact triangle
\begin{equation}
\label{eq:dualizing}
\xymatrix{
\omega_{\sH/B}[n] \ar[r] & \omega_{\sH/B}^{\bullet} \ar[r] & \sE[n-1] \ar[r]^-{+1} & .
}
\end{equation}
Applying $\_ \otimes_L \sO_{\sH_0}$  to \eqref{eq:dualizing} and then considering the long exact sequence of cohomology sheaves yields 
\begin{equation}
\label{eq:central_fiber_S_n-1:derived_restriction}
\xymatrix{
  h^{-n-1}(\sE[n-1] \otimes_L \sO_{\sH_0}) \ar[r] & h^{-n} (\omega_{\sH/B}[n] \otimes_L \sO_{\sH_0}) \ar[r] & h^{-n}( \omega_{\sH/B}^{\bullet} \otimes_L \sO_{\sH_0}) \ar[r] & \\ 
\ar[r] & h^{-n}(\sE[n-1] \otimes_L \sO_{\sH_0}) \ar[r] & h^{-n+1} (\omega_{\sH/B}[n] \otimes_L \sO_{\sH_0})  ,
}
\end{equation}
where 
\begin{itemize}
\item $h^{-n-1}(\sE[n-1] \otimes_L \sO_{\sH_0}) \cong \sTor^2_{\sH}(\sE,\sO_{\sH_0}) = 0$ by Lemma \ref{lem:tor},
\item $h^{-n} (\omega_{\sH/B}[n] \otimes_L \sO_{\sH_0}) \cong \omega_{\sH/B}|_{\sH_0}$,
\item $h^{-n}( \omega_{\sH/B}^{\bullet} \otimes_L \sO_{\sH_0}) \cong h^{-n}( \omega_{\sH_0}^{\bullet}) \cong \omega_{\sH_0}$ by Proposition \ref{prop:base_change_complex}.\ref{itm:base_change_complex:restriction_isomorphism},
\item $h^{-n}(\sE[n-1] \otimes_L \sO_{\sH_0}) \cong \sTor^1_{\sH}(\sE,\sO_{\sH_0})$ and
\item $h^{-n+1} (\omega_{\sH/B}[n] \otimes_L \sO_{\sH_0}) \cong h^{1} (\omega_{\sH/B} \otimes_L \sO_{\sH_0})=0$ since $\_ \otimes_L \sO_{\sH_0}$ is left derived functor, so $\omega_{\sH/B} \otimes_L \sO_{\sH_0}$ is supported in negative cohomological degrees.
\end{itemize}
Therefore, \eqref{eq:central_fiber_S_n-1:derived_restriction} is isomorphic to the following exact sequence.
\begin{equation*}
\xymatrix{
0 \ar[r] & \omega_{\sH/B}|_{\sH_0} \ar[r] &  \omega_{\sH_0} \ar[r] & \sTor^1_{\sH}(\sE,\sO_{\sH_0}) \ar[r] & 0 
}
\end{equation*}
Since  $\sE$ has an associated component through $x$  contained in $\sH_0$, $\sTor^1_{\sH}(\sE,\sO_{\sH_0})_x \neq 0$ by Lemma \ref{lem:tor}, which concludes our proof.
\end{proof}

Having finished the proof of Theorem \ref{thm:central_fiber_S_n-1}, the rest of the section is devoted to the following consequence. See Section \ref{sec:introduction} for motivation on Theorem \ref{thm:structure_canonical_Serre}.

\begin{thm}[(=Theorem \ref{thm:main3})]
\label{thm:structure_canonical_Serre}
If $X$ is an  $S_3, G_2$ scheme of pure dimension $n$, which has a closed point with depth $n-1$, then $\omega_X$ is not $S_3$.
\end{thm}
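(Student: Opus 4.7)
The plan is to argue by contradiction: suppose $\omega_X$ is $S_3$ and construct a flat pencil $f:\sH\to B$ of hypersurfaces of $X$, generated by hypersurfaces $H_0$ and $H_\infty$ of $X$ of the same degree, with $B$ an open subset of $\bP^1$, such that $H_0\ni P$ but $H_\infty\not\ni P$. The goal is to arrange things so that Theorem \ref{thm:central_fiber_S_n-1} applies to $f$, forcing the restriction $\omega_{\sH/B}|_{\sH_0}\to\omega_{\sH_0}$ to fail to be an isomorphism, while at the same time Lemma \ref{lem:restriction_map_pencil}.\ref{itm:restriction_map_pencil:omega_X_S3} forces it to be an isomorphism from the $S_3$ hypothesis on $\omega_X$. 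Observe that the assumptions already force $n\geq 4$: $X$ being $S_3$ demands depth $\geq\min\{3,n\}=3$ at the depth-$(n-1)$ closed point $P$.

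After replacing $X$ with an affine open neighborhood of $P$ I may assume $X$ is quasi-projective. Let $T\subseteq X$ be the closure of the set of closed points of depth $n-1$ and let $T_0$ be an irreducible component of $T$ containing $P$. I choose $H_0$ and $H_\infty$ of the same (sufficiently large) degree with
\begin{itemize}
\item $H_0$ containing $T_0$, passing through $P$, but containing no irreducible component of $X$;
\item $H_\infty$ a generic member of its linear system among those avoiding $P$ and not containing $T_0$.
\end{itemize}
Shrinking $B\subseteq\bP^1$ around $0$ so that no closed fiber of $\sH\to B$ contains an irreducible component of $X$, Lemma \ref{lem:ideal_sheaves_line_bundles_flat} yields flatness.

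Two verifications are then required. First, to invoke Lemma \ref{lem:restriction_map_pencil} we need $\sH$ and every closed fiber $\sH_b$ to be $S_2$ and $G_1$. By Fact \ref{fact:S_d_pullback}, $X\times B$ is $S_4$, and it is $G_2$ since Gorensteinness is preserved under the smooth base change $X\times B\to X$; the Cartier cuts $\sH$ and $\sH_b$ therefore inherit $S_3,G_1$ and $S_2,G_1$ respectively via Fact \ref{fact:S_d_hyperplane}, with the only non-generic point $(P,0)\in\sH_0$ handled by $\depth\sO_{\sH_0,P}=n-2\geq 2$ and the fact that $P$ sits in codimension $n-1\geq 3$ in $\sH_0$, so $G_1$ imposes no condition at $P$. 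Second, to invoke Theorem \ref{thm:central_fiber_S_n-1} I must exhibit an irreducible component of the depth-$(n-2)$ locus of $f$ contained in $\sH_0$. By Fact \ref{fact:S_d_hyperplane}, the set of closed depth-$(n-2)$ points in the fibers of $f$ is exactly $\sH\cap(T\times B)$. Because $H_0\supseteq T_0$, the defining equation $f_0+bf_\infty$ of the pencil restricts to $b\cdot f_\infty|_{T_0}$ on $T_0\times B$, giving
\begin{equation*}
\sH\cap(T_0\times B)=(T_0\times\{0\})\cup\bigl((T_0\cap H_\infty)\times B\bigr).
\end{equation*}
A dimension count shows both pieces have dimension $\dim T_0$ in the codimension-one subscheme of $T_0\times B$, so $T_0\times\{0\}$ is an irreducible component; it contains $(P,0)$ while the second piece does not, because $P\notin H_\infty$.

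With both verifications in place, Theorem \ref{thm:central_fiber_S_n-1} says the restriction map is not an isomorphism, while Lemma \ref{lem:restriction_map_pencil}.\ref{itm:restriction_map_pencil:omega_X_S3} says that under the $S_3$ hypothesis on $\omega_X$ it is one --- a contradiction. The main obstacle in executing the plan is the case when $T_0$ is positive-dimensional: then no $H_\infty$ of the required degree can avoid $T_0$ entirely, so $(P,0)$ cannot be made isolated in the depth-$(n-2)$ locus. The resolution is precisely to absorb $T_0$ into $H_0$, so that $T_0\times\{0\}$ --- rather than the point $\{(P,0)\}$ --- becomes the component of the depth-$(n-2)$ locus sitting in the special fiber required by Theorem \ref{thm:central_fiber_S_n-1}.
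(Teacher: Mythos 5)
Your proof is correct and follows essentially the same route as the paper's: both set up a pencil whose special member contains a component of the (closure of the) depth-$(n-1)$ locus while the general member does not, and then play Theorem \ref{thm:central_fiber_S_n-1} off against Lemma \ref{lem:restriction_map_pencil}.\ref{itm:restriction_map_pencil:omega_X_S3} to reach a contradiction. The only blemish is the claim that $X\times B$ is $S_4$ (hence $\sH$ is $S_3$): at a point lying over the generic point of $B$ and a depth-$3$ point of $X$ the depth does not increase, so $X\times B$ is in general only $S_3$ --- but this is harmless, since Lemma \ref{lem:restriction_map_pencil} only requires $\sH$ and its closed fibers to be $S_2$, which follows from $X\times B$ and $X$ being $S_3$ via Fact \ref{fact:S_d_hyperplane}.
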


\begin{proof}
Since the statement of the theorem is local, we may assume $X$ is affine and hence quasi-projective. Restricting to a sufficiently small neighborhood of a point with depth $n-1$, all assumptions of the theorem stay valid and we may assume that all closed points of $X$ have depth at least $n-1$. We use the notation $\sX:= X \times B$. Let $X = \bigcup_{i=1}^r X_i$ be the decomposition into irreducible components. 

Consider a pencil $f : \sH \to B= \bP^1$ of hypersurfaces of $X$ such that  
\begin{enumerate}
 \item $\sH_0$ contains the entire non Gorenstein locus,
  \item \label{itm:structure_canonical_Serre:intersection_components} $\emptyset \neq \sH_0 \cap X_j \neq X_j$ for every $ 1 \leq j \leq r $,
  \item $\sH_{\infty}$ is a general hypersurface.
\end{enumerate}
In particular then, 
\begin{equation}
\label{eq:structure_canonical_Serre:intersection}
(\sH_0 \setminus \sH_{\infty}) \cap X_j \neq \emptyset \quad   \textrm{ for every } \quad   1 \leq j \leq r .
\end{equation}
By definition of the pencil, if $P \in \sH_0 \setminus \sH_{\infty}$, then $P \notin \sH_b$ for any $b \neq 0$. Hence assumption \eqref{itm:structure_canonical_Serre:intersection_components} and \eqref{eq:structure_canonical_Serre:intersection} imply that for all $b \in B$, there is a point of $X_j$ not contained in $\sH_b$.
Note now, that since $X$ is $S_1$, by  Lemma \ref{lem:S_d_pullback}, so is $\sX$. In particular, then all associated points of $X$ and $\sX$ are general points of components. So, since none of $\sH_b$ contains any of the $X_j$, $\sI_{\sH,\sX}$ and $\sI_{\sH_b,\sX_b}$ for every $b \in B$ have non-zero divisor local generators and hence are line bundles. Then $\sH$ is flat over $B$ by Lemma \ref{lem:ideal_sheaves_line_bundles_flat}.

Define the following loci
\begin{equation*}
 Z:= \overline{\{ x \in X | x \textrm{ is closed, } \depth \sO_{X,x} = n-1 \} }
\end{equation*}
\begin{equation*}
 W:= \overline{\{ x \in X | x \textrm{ is closed, } \depth \sO_{\sH_{f(x)},x} = n-2 \} } 
\end{equation*}
By construction and by Fact \ref{fact:S_d_hyperplane}, $W_0 = Z$ and $W = (p^{-1} Z)_{\red}$, where $p : \sH \to X$ is the natural projection. Let $Z'$ be an irreducible component of $Z$ of the highest dimension. By the choice of $\sH_0$ and $\sH_{\infty}$, $Z' \subseteq \sH_0$, and $Z' \not\subseteq \sH_{\infty}$. Furthermore, $\sH_{\infty}$ does not contain any of the irreducible components of $Z$. Hence, the general fiber of the map $W \to B$ will have dimension at most $\dim Z'-1$. So,  $W$ has dimension  $\dim Z'$. Hence,  $Z' \subseteq W_0$ is an irreducible component of $W$. In particular, by Theorem \eqref{thm:central_fiber_S_n-1}, the restriction morphism $\omega_{\sH/B}|_{\sH_0} \to \omega_{\sH_0}$ is not an isomorphism.

On the other hand assume that $\omega_X$ is $S_3$. Since $X$ is $G_2$,  $\sH$ and $\sH_b$ for every $b \in B$ are $G_1$ . In fact,  $\sH$, and $\sH_b$ for a general $b \in B$ are $G_2$ also, but for $\sH_0$ only $G_1$ can be guaranteed. Also, $X$ is $S_3$ by assumption and $\sX$ is $S_3$ because of Lemma \ref{lem:S_d_pullback}. Then $\sH$ and $\sH_b$ for every $b \in B$ are $S_2$  by Fact \ref{fact:S_d_hyperplane}. That is, we may apply Lemma \ref{lem:restriction_map_pencil}.\ref{itm:restriction_map_pencil:omega_X_S3}, which states that the restriction homomorphism $\omega_{\sH/B}|_{\sH_0} \to \omega_{\sH_0}$ is  an isomorphism. This is a contradiction, hence $\omega_X$ cannot be $S_3$.
\end{proof}

\section{Stable reduction}
\label{sec:stable_reduction}

In Construction \ref{const:main}, although the general fiber of $\sH \to B$ has mild, i.e., log canonical, singularities, $\sH_0$ is very singular. The failure of base change for $\omega_{\sH/B}$ implies that by \cite[Theorem 7.9]{KJ_KS_LCS} $\sH_0$ is not Du Bois. By \cite[Theorem 1.4]{KJ_KS_LCS}, it is also not log canonical.  In this section, we compute the stable limit of $\sH \to B$. It is the limit at $0$ of some stable family $\sH' \to \widetilde{B}$. This family has two important properties. First,  $\sH \times_{B} \widetilde{B}|_{\widetilde{B} \setminus \{0\}} \cong \sH'|_{\widetilde{B} \setminus \{0\}}$ for a finite cover $\phi : (\widetilde{B},0) \to (B,0)$ totally ramified at $0$.  Second,  $(\sH')_0$ is log canonical, and hence by \cite[Theorems 1.4 and 7.9]{KJ_KS_LCS},  $\omega_{\sH'/\widetilde{B}}$ commutes with base change. So, $(\sH')_0$ is the ``right'' limit of $\sH$, and the incompatibility of Theorem \ref{thm:main1} can be thought of as a consequence of using the wrong limit in Construction \ref{const:main}. '

\begin{prop}
\label{prop:rel_log_canonical}
Assuming that $Y$ is smooth, the stable limit of  Construction \ref{const:main} is the $d$-fold cyclic cover of $Y$ ramified exactly over $D$, with eigen-line bundles $\sO_Y(-i)$ for $i=0,\dots,d-1$.
\end{prop}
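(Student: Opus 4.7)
The plan is to execute the stable reduction of $\sH \to B$ explicitly at the unique problematic point $(P, 0)$ of the total space, via a degree-$d$ base change followed by an ad-hoc blow-up-and-contract argument. First, since $\tilde g(P) \neq 0$, the equation $g + s\tilde g = 0$ of $\sH \subset X \times B$ solves uniquely for $s$ near $(P,0)$, giving an isomorphism $\widehat{\sO_{\sH,(P,0)}} \cong \widehat R$, where $\widehat R$ is the completion at the vertex of the affine-cone ring $R = \bigoplus_{m \geq 0} H^0(Y, \sO_Y(m))$; under this isomorphism the pulled-back base coordinate $s$ corresponds to $-g/\tilde g$, which is the degree-$d$ element $g\in R_d$ times a unit. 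After the base change $\widetilde B \to B$, $t \mapsto t^d = s$, and absorbing a $d$-th root of $\tilde g$ into $t$, the local ring of $\widetilde \sH := \sH \times_B \widetilde B$ at $(P,0)$ becomes $\widehat R [t]/(t^d - g)$, which I recognize as the completion at the vertex of the affine cone
\begin{equation*}
C_Z \;=\; \Spec\bigl(R[t]/(t^d - g)\bigr) \;=\; \Spec_Y \bigoplus_{i=0}^{d-1} \sO_Y(-i),
\end{equation*}
where $\pi \colon Z \to Y$ is the $d$-fold cyclic cover ramified exactly over $D$ with the stated $\mu_d$-eigen-line-bundle decomposition. Since $D$ (generic) and $Y$ are smooth, $Z$ is smooth and $C_Z$ is normal, so no further normalization of $\widetilde \sH$ is required at $(P,0)$.

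Next, I would blow up the point $(P, 0)$ in $\widetilde \sH$. The exceptional divisor is $E \cong Z$ with normal bundle $\pi^* \sO_Y(-1)$, and the blown-up total space is locally the total space of this line bundle over $E$. The function $t$ is a degree-$1$ section of $\pi^* \sO_Y(1)$ on $C_Z$ and hence vanishes to order $1$ along $E$, and to order $1$ along the proper transform $H'$ of the original central fiber $H = C_D$; so the new total transform of the central fiber is the reduced divisor $E + H'$, where $H'$ is the total space of $\pi^* \sO_Y(-1)|_R$ over the ramification divisor $R = \pi^{-1}(D)_{\mathrm{red}} \cong D$, and $H'$ meets $E$ transversally along $R$. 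Contracting the line-bundle fibers of $H' \to R$---feasible because of the explicit product structure of $H'$---yields a new family $\widetilde \sH'' \to \widetilde B$ whose central fiber is exactly $E \cong Z$.

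Finally, the stability of $\widetilde \sH'' \to \widetilde B$ reduces to smoothness of $Z$ (giving log canonicity for free) and ampleness of $\omega_Z \cong \pi^*\bigl(\omega_Y \otimes \sO_Y(d-1)\bigr)$ via the Hurwitz formula, which holds once $d$ is large, as is assumed in Construction~\ref{const:main}. The main obstacle is the contraction step: one has to verify that the line-bundle contraction $H' \to R$ can actually be effected inside the total space so as to produce an honest flat projective family, and that no further modification is needed to reach the stable model. Everything else---the formal identification with $C_Z$, the eigen-line-bundle decomposition, and the description of the exceptional divisor---is a direct unwinding of the presentation $\widehat R[t]/(t^d - g)$ of the local ring after base change.
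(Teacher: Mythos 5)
Your strategy is the ``algorithmic'' route (degree-$d$ base change, blow up the vertex, contract the old component of the central fibre) --- the paper explicitly relegates this method to its preprint version because it is lengthy, and uses a different argument in print. Your local analysis up to the blow-up is essentially right: the projection $\sH\to X$ is an isomorphism near $(P,0)$ because $f_{\infty}(P)\neq 0$, the base-changed family is formally the cone over the cyclic cover $Z$ at that point, and $\widetilde{\sH}$ needs no normalization there. (Two smaller imprecisions: your displayed equality $C_Z=\Spec_Y\bigoplus_{i=0}^{d-1}\sO_Y(-i)$ identifies the affine cone over $Z$ with $Z$ itself; and identifying the exceptional divisor with $Z$ and the blow-up with the total space of $\pi^*\sO_Y(-1)$ uses that the cone ring is generated in degree one, i.e.\ projective normality of $Z$ with respect to $\pi^*\sO_Y(1)$ --- available because $p\gg 0$, but it must be invoked.) The genuine gap is exactly the step you flag and then do not close: the contraction of $H'$ onto $\pi^{-1}(D)_{\red}$. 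You must actually produce this as a projective morphism over $\widetilde{B}$ (say via a relative MMP step or a semiample line bundle trivial precisely on the rulings $\ell$ of $H'$, using $H'\cdot\ell=-1$, which follows from $(E+H')\cdot\ell=0$ and $E\cdot\ell=1$), and then verify that the target is projective and flat over $\widetilde{B}$ with \emph{reduced} central fibre isomorphic to $E\cong Z$ and relatively ample $\omega$. None of this is automatic, and without it the proof is incomplete.

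The paper sidesteps the birational surgery entirely with a global coordinate change. Writing $\sH$ as the zero locus of $f_0(z_0,\dots,z_{N-1})+t\,f_{\infty}(z_0,\dots,z_N)$ in $X\times B\subseteq\bP^N\times B$ (with $f_0$ independent of $z_N$ because $\sH_0$ is a cone, and the coefficient $c$ of $z_N^d$ in $f_{\infty}$ nonzero because $P\notin\sH_{\infty}$), it pulls back along $t=s^d$ and defines $\sH'$ by $f_0+s^d f_{\infty}(z_0,\dots,z_{N-1},\tfrac{1}{s}z_N)$. The automorphism $z_N\mapsto s z_N$ of $\bP^N\times\widetilde{B}^*$ identifies $\sH'$ with $\sH\times_B\widetilde{B}$ away from $0$, while at $s=0$ the equation degenerates to $f_0+c z_N^d=0$, which is globally the smooth, canonically polarized cyclic cover of $Y$ branched over $D$; uniqueness of the stable limit then concludes. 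If you wish to keep your route you must supply the contraction argument in full; otherwise the coordinate-change trick is the efficient repair.
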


\begin{proof}
First, shrink  $B$ if necessary so that $ \infty \notin B$ and that every fiber apart from $\sH_0$ is log canonical. Notice that this is possible because the general fiber of $\sH$ is smooth by Bertini's theorem. Also, since we assumed that $d \gg 0$, the family $\sH \to B$ has canonically polarized fibers and hence is stable over $B^* := B \setminus \{0\}$. Define $\sX := X \times B$.

The closed embedding $Y \subseteq \bP^{N-1}$ induces a natural closed embedding $X \subseteq \bP^N$. This yields very ample line bundles $\sO_{\bP^N}(1)$ and $\sO_X(1)$. Then, $\sH$ is the zero locus of a section $f_0 + t f_{\infty}$ of $\sO_{\sX}(d) := p_1^* \sO_X(d)$ for some $f_0, f_{\infty} \in H^0(\bP^N,\sO_{\bP^N}(d))$, as explained in Section \ref{sec:notation}. 

Choose a basis $z_0,\dots,z_N$ of $H^0(\bP^N,\sO_{\bP^N}(1))$, such that $z_0,\dots, z_{N-1}$ form a basis of $H^0(\bP^{N-1},\sO_{\bP^{N-1}}(1))$. Then $f_0$ and $f_{\infty}$ correspond to degree $d$ homogeneous polynomials in variables $z_0,\dots,z_{N-1}$ and $z_0,\dots,z_N$, respectively.  Furthermore, the fact that $P \notin \sH_{\infty}$ implies that the coefficient of $z_N^d$ in $f_{\infty}$ is non-zero, say $c$.

Let $\phi: (\widetilde{B},0) \to (B,0)$ be the degree $d$ cyclic cover branched only at $0$, where it is totally ramified, and let $s$ be a local parameter of $\widetilde{B}$ at $0$, such that $s^d=t$. Consider the subscheme $\sH' \subseteq \sX \times_{B} \widetilde{B} =: \sX_{\phi}$ defined by 
\begin{equation*}
f_0(z_0,\dots,z_{N-1}) + s^d f_{\infty} (z_0,\dots, z_{N-1}, \frac{1}{s} z_N) \in H^0(\sX_{\phi}, \sO_{\sX_{\phi}}(d)) ,
\end{equation*}
where $\sO_{\sX_{\phi}}(d)$ is the pullback of $\sO_{\sX}(d)$ to $\sX_{\phi}$.

By the uniqueness of stable limit, $\sH'$ is a stable reduction of $\sH$ (i.e., a stable family isomorphic generically to the pullback of $\sH$), if 
\begin{enumerate}
 \item \label{itm:rel_log_canonical:proof2:central_fiber} $(\sH')_0$ is a canonically polarized manifold, and 
 \item \label{itm:rel_log_canonical:proof2:general_fiber} $\sH'|_{\widetilde{B}^*} \cong \sH_{\phi}|_{\widetilde{B}^*}$, where $\sH_{\phi}:= \sH \times_B \widetilde{B}$ and $\widetilde{B}^*:=\widetilde{B} \setminus \{0 \}$.
 \end{enumerate}
To prove point \eqref{itm:rel_log_canonical:proof2:central_fiber}, notice that $(\sH')_0$ is defined by the zero locus of $s$ on $\sH'$ or equivalently by the zero locus of the following section of $\sO_{X}(d)$ on $X$.
\begin{equation*}
f_0(z_0,\dots,z_{N-1}) + c z_N^d
\end{equation*}
Hence it is the cyclic cover of $Y$ of degree $d$ branched along $D$ with eigensheaves $\sO_Y(-i)$ for $0 \leq i \leq d-1$. So first, it is smooth by \cite[Lemma 2.51]{KJ_MS_BG}. Second, since $(\sH')_0$ is contained in the smooth part of $X$, $\omega_{(\sH')_0} \cong \omega_X(d)|_{(\sH')_0}$ by Proposition \ref{prop:restriction_Cartier_divisor} and is a line bundle. So, since $d \gg 0$, $(\sH')_0$ is a canonically polarized manifold indeed. 

To prove point \eqref{itm:rel_log_canonical:proof2:general_fiber}, notice that the equation of $\sH_{\phi}$ in $\sX_{\phi}$ is 
\begin{equation*}
f_0(z_0,\dots,z_{N-1}) + s^d f_{\infty} (z_0,\dots, z_{N-1}, z_N) \in H^0(\sX_{\phi}, \sO_{\sX_{\phi}}(d))
\end{equation*}
Hence, $\sH_{\phi}|_{\widetilde{B}^*} \cong \sH'|_{\widetilde{B}^*}$ via the isomorphism induced by the following automorphism of $\bP^N \times \widetilde{B}^*$.
\begin{equation*}
(z_0, \dots, z_{N+1}, z_N ) \mapsto (z_0,\dots,z_{N-1}, s z_N) 
\end{equation*}

We proved both points \eqref{itm:rel_log_canonical:proof2:central_fiber} and \eqref{itm:rel_log_canonical:proof2:general_fiber}. Consequently, $\sH'$ is a stable reduction of $\sH$ indeed. Through the course of the proof of point \eqref{itm:rel_log_canonical:proof2:central_fiber}, we also proved that $(\sH')_0$ is indeed the cyclic cover described in the statement of the proposition. 
\end{proof}


\bibliographystyle{skalpha}
\bibliography{include}

\end{document}